\newtheorem{theorem}{Theorem}[section] 
\newtheorem{proposition}[theorem]{Proposition} 
\newtheorem{lemma}[theorem]{Lemma} 
\theoremstyle{definition}
\newtheorem{remark}[theorem]{Remark} 
\newtheorem{remarks}[theorem]{Remarks}
\newcommand{\CC}{{\mathbb C}} 
\newcommand{\NN}{{\mathbb N}} 
\newcommand{\ZZ}{{\mathbb Z}}
\newcommand{\cB}{{\mathcal B}}
\newcommand{\cE}{{\mathcal E}} 
\newcommand{\cF}{{\mathcal F}} 
\newcommand{\cH}{{\mathcal H}} 
\newcommand{\cI}{{\mathcal I}}
\newcommand{\cJ}{{\mathcal J}} 
\newcommand{\cK}{{\mathcal K}} 
\newcommand{\cL}{{\mathcal L}} 
\newcommand{\cM}{{\mathcal M}}
\newcommand{\cR}{{\mathcal R}}
\newcommand{\cX}{{\mathcal X}}
\newcommand{\fk}{\mathbf{k}}
\newcommand{\fl}{\mathbf{l}}
\newcommand{\Ra}{\Rightarrow}
\newcommand{\ra}{\rightarrow} 
\newcommand{\ol}{\overline}
\let\phi=\varphi 
\newcommand{\iac}{\mathrm{i}}
\newcommand{\lin}{\operatorname{Lin}}
\newcommand{\supp}{\operatorname{supp}}
\newcommand{\tl}{\tilde}
\newcommand{\nr}[1]{\vspace{0.1ex}\noindent\hspace*{12mm}\llap{\textup{(#1)}}} 
\begin{document} 
\title[Invariant Weakly Positive Semidefinite Kernels]{Invariant Weakly Positive Semidefinite 
Kernels with Values in Topologically Ordered $*$-Spaces}\thanks{The 
second named author's work supported by a grant of the Romanian 
National Authority for Scientific Research, CNCS л UEFISCDI, project number
PN-II-ID-PCE-2011-3-0119.}
 
 \date{\today}
 
 \author[S. Ay]{Serdar Ay}
 \address{Department of Mathematics, Bilkent University, 06800 Bilkent, Ankara, 
Turkey}
 \email{serdar@fen.bilkent.edu.tr}
 
\author[A. Gheondea]{Aurelian Gheondea} 
\address{Department of Mathematics, Bilkent University, 06800 Bilkent, Ankara, 
Turkey, \emph{and} Institutul de Matematic\u a al Academiei Rom\^ane, C.P.\ 
1-764, 014700 Bucure\c sti, Rom\^ania} 
\email{aurelian@fen.bilkent.edu.tr \textrm{and} A.Gheondea@imar.ro} 

\begin{abstract} We consider weakly positive semidefinite 
kernels valued in ordered $*$-spaces with or without 
certain topological properties, and investigate their 
linearisations (Kolmogorov decompositions) as well as their reproducing kernel
spaces. The spaces of realisations are of VE (Vector Euclidean) or VH 
(Vector Hilbert) type, more precisely, vector spaces that possess gramians 
(vector valued inner products). The main results refer to the case when the kernels 
are invariant under certain actions of $*$-semigroups and show under which conditions 
$*$-representations on VE-spaces, or VH-spaces in the topological case, can be
obtained. Finally we show that these results unify most of dilation type results for
invariant positive semidefinite kernels with operator values as well as recent results on
positive semidefinite maps on $*$-semigroups with values operators from a locally bounded 
topological vector space to its conjugate $Z$-dual space, for $Z$ an ordered $*$-space.
\end{abstract} 

\subjclass[2010]{Primary 47A20; Secondary 43A35, 46E22, 46L89}
\keywords{VH-space, VE-space, topologically ordered 
$*$-space, admissible space,
Hermitian kernel, weakly positive semidefinite kernel,
invariant kernel, reproducing kernel, linearisation, 
$*$-semigroup, $*$-representation}
\maketitle 
 
 \section{Introduction}
 
The dilation theory, initiated by the seminal articles of M.A.~Na\u\i mark in \cite{Naimark1} and 
\cite{Naimark2}, consists today of an extraordinary large diversity of results that may look, 
at the first glance, as having next to nothing in common, e.g.\ see N.~Aronszajn \cite{Aronszajn},
W.B.~Arveson \cite{Arveson1}, S.D.~Barreto et al.\ \cite{Barreto}, D.~Ga\c spar and P.~Ga\c spar 
\cite{GasparGaspar1}, \cite{GasparGaspar2}, A.~Gheondea and B.E.~U\u gurcan 
\cite{GheondeaUgurcan}, J.~G\'orniak and A.~Weron \cite{GorniakWeron}, \cite{GorniakWeron2},
J.~Heo \cite{Heo}, G.G.~Kasparov \cite{Kasparov}, R.M.~Loynes \cite{Loynes1}, G.J.~Murphy
\cite{Murphy}, M.~Skeide \cite{Skeide}, W.F.~Stinespring \cite{Stinespring}, F.H.~Szafraniec 
\cite{Szafraniec},  \cite{Szafraniec2}, B.~Sz.-Nagy \cite{SzNagy}, to cite a few only. In a series of 
recent articles  \cite{Gheondea}, \cite{AyGheondea}, and \cite{AyGheondea2}, a unification of this 
theory under some general results of operator valued  positive semidefinite kernels that are 
invariant under actions of $*$-semigroups has been initiated. 
 Historically, based on some classical results on scalar kernels of J.~Mercer \cite{Mercer} and
 A.N.~Kolmogorov \cite{Kolmogorov},
 positive semidefinite kernels that are invariant under actions
 of groups have been used more than forty years ago in mathematical models
 of quantum physics by D.E.~Evans and J.T.~Lewis \cite{EvansLewis} and in probability theory
 by K.R.~Parthasaraty and K.~Schmidt \cite{ParthasaratySchmidt} and turned out to be successful 
even beyond positive semidefiniteness, as in \cite{ConstantinescuGheondea2}. 
 
 On the other hand, positive semidefiniteness of a scalar valued kernel 
 $\fk\colon X\times X\ra \CC$, defined as
 \begin{equation}\label{e:sps}
 \sum_{j,k=1}^n \overline\alpha_i \alpha_j \fk(x_i,x_j)\geq 0,\quad n\in\NN,\ 
 \alpha_1,\ldots,\alpha_n\in \CC,\ x_1,\ldots,x_n\in X,
 \end{equation}
has many different ways of generalisation when comes to operator valued kernels and, 
consequently, the diversity of dilation results increases considerably. From the point of view of
unification of dilation theory there is a challenge: are there a concept of positive 
semidefiniteness  and a concept of vector space where these kernels take values, 
that can yield dilation theorems that are sufficiently general to contain all (most) of the other dilation 
theorems for operator valued kernels or maps? 
Clearly, such a concept of "weakly" positive semidefiniteness must
refer simply to the bare situation as in \eqref{e:sps}, while the concept of vector space 
should be an ordered $*$-spaces and, consequently,
the spaces of dilation that we expect should be,
in the nontopological case, of VE (Vector Euclidean) type or, in the topological case, of 
VH (Vector Hilbert) type, in the sense of R.M.~Loynes \cite{Loynes2}. So far, "weakly" positive
semidefiniteness have been rarely considered, e.g.\ W.L.~Paschke \cite{Paschke} has a remark
on maps on $C^*$-algebras and, for the special purposes of reproducing kernel spaces, 
it was first considered in \cite{GasparGaspar2} and then used in \cite{PaterBinzar}
as well.

The aim of this article is to develop a systematic study of invariant weakly positive semidefinite 
kernels with values in ordered $*$-spaces and to show that most of the previous dilation results
as in \cite{Gheondea}, \cite{AyGheondea}, \cite{AyGheondea2} and hence, most of the known 
dilation theory, 
can be recovered under this setting. The main results are contained in 
theorems \ref{t:veinvkolmo}, \ref{t:vhinvkolmo}, and \ref{t:vhinvkolmow}, from which we then show
how special cases concerning different kinds of "stronger" positive semidefiniteness can be derived.
Of course, since these dilation theorems are so general, in each particular case we expect that
some additional technical difficulties should show up, but here the main 
idea is of unification and there is always a price to be paid in this enterprise.

In the following we briefly present the contents of this article. 
In Section~\ref{s:npr} we briefly recall the general terminology on ordered $*$-spaces and their 
topological versions, on VE-spaces (Vector Euclidean spaces) and VH-spaces 
(Vector Hilbert spaces) and their operator theory. In Lemma~\ref{l:schwarz} we prove a surrogate
of Schwarz inequality which turns out to be very useful. This inequality with the constant $2$ 
has been claimed before in \cite{Ciurdariu}, or simply stated without proof or reference 
as in \cite{PaterBinzar}, but since the proofs we have seen until now turned out to be flawed, 
we give a detailed proof of it with constant $4$. 

Section~\ref{s:wpsk} contains a detailed study of linearisations and reproducing kernel spaces
associated to weakly positive semidefinite kernels which pertains to VE-spaces or VH-spaces.
Since the geometry of these spaces is so badly behaved, a careful treatment is necessary from
the point of view of minimality and of the equivalence of linearisation with reproducing kernel
space. The main results are contained in Section~\ref{s:iwpsk}. In Theorem~\ref{t:veinvkolmo}
we obtain the nontopological fabric of dilation theorems for invariant weakly positive semidefinite
kernels with values in ordered $*$-spaces and then we obtain two topological versions, 
Theorem~\ref{t:vhinvkolmo} for bounded operators and Theorem~\ref{t:vhinvkolmow} 
for continuously adjointable operators. As expected, both of these topological variants 
refer to a variant of the 
B.~Sz.-Nagy boundedness condition but, it is interesting to observe that, a second boundedness 
condition which refers to an anomaly of operator theory for continuously adjointable operators 
on VH-spaces related to the continuity of the adjoint, see condition (c) in 
Theorem~\ref{t:vhinvkolmo2}, does not show up. 

Finally, in Section~\ref{s:usdt} we show that the main theorems contain the dilation results obtained
in \cite{Gheondea}, \cite{AyGheondea}, and \cite{AyGheondea2}, and hence most of the dilation
theory, by explicitly showing how
to put the stage in each case. A special observation is that for the reproducing kernel space 
versions, which is one of the main tool we use, there are some technical difficulties related to
missing a version of Riesz's Representation Theorem in VH-spaces or VE-spaces and which is
solved by carefully using identifications. In addition, we show how the recent results of F.~Pater
and T.~B\^\i nzar \cite{PaterBinzar} on
positive semidefinite maps on $*$-semigroups with values operators from a vector space to its 
conjugate $Z$-dual space, for $Z$ an ordered $*$-space, 
that generalise previous results of J.~G\'orniak and A.~Weron \cite{GorniakWeron2},
can be recovered by our main results, with actually stronger statements.

\section{Notation and Preliminary Results}\label{s:npr}

In this section we review some of the definitions and some theorems on 
ordered $*$-spaces, topologically ordered $*$-spaces, admissible 
spaces, VE-spaces, topologically VE-spaces and VH-spaces, and their operator theory, cf.\
R.M. Loynes, \cite{Loynes1}, \cite{Loynes2} and, 
for a modern treatment of the subject and some proofs, 
we refer also to \cite{AyGheondea} and \cite{AyGheondea2}. 

\subsection{Topologically Ordered $*$-Spaces.}\label{ss:as}
A complex vector space $Z$ is called an \emph{ordered $*$-space} if:
\begin{itemize}
\item[(a1)] $Z$ has an \emph{involution} $*$, that is, a map $Z\ni z\mapsto z^*\in Z$ 
that is \emph{conjugate linear} 
(($s x+t y)^*=\ol s x^*+\ol t y^*$ for all 
$s,t\in\CC$ and all $x,y\in Z$) and \emph{involutive} 
($(z^*)^*=z$ for all $z\in Z$). 
\item[(a2)] In $Z$ there is a \emph{convex cone} $Z_+$ ($s x+t y\in Z_+$ 
for all numbers $s,t\geq 0$ and all $x,y\in Z_+$), that is  
\emph{strict} ($Z_+\cap -Z_+=\{0\}$), and consisting of \emph{selfadjoint elements} 
only  ($z^*=z$ for all 
$z\in Z_+$). This cone is used to define a \emph{partial order} in $Z$ by: 
$z_1  \geq z_2$ if $z_1-z_2 \in Z_+$.
\end{itemize}

The complex vector space $Z$ is called a \emph{topologically ordered $*$-space} if it is 
an ordered $*$-space and:
\begin{itemize}
\item[(a3)] $Z$ is a \emph{Hausdorff separated locally convex space}.
\item[(a4)] The cone $Z_+$ 
is \emph{closed}, with respect to this topology.
\item[(a5)] The topology of $Z$ is \emph{compatible} with the partial ordering in the 
sense that there exists a base of the topology, linearly generated by a family of 
neighbourhoods $\{N_j\}_{j\in\cJ}$ of the origin, such that all of them are 
absolutely convex and 
\emph{solid}, that is, whenever $x\in N_j$ and $0\leq y\leq x$ then $y\in N_j$.
\end{itemize}
It can be proven that axiom (a5) is equivalent with the following one, see \cite{AyGheondea2}:
\begin{itemize}
\item[(a5$^\prime$)] There exists a collection of seminorms 
$\{p_j\}_{j\in \cJ}$ defining the 
topology of $Z$ that are \emph{increasing}, that is, $0\leq x\leq y$ implies 
$p_j(x)\leq p_j(y)$.
\end{itemize}
We denote the collection of all increasing continuous seminorms on $Z$ by $S(Z)$.
$Z$ is called an \emph{admissible space} if, in addition to the axioms (a1)--(a5),
\begin{itemize}
\item[(a6)] The topology on $Z$ is complete.
\end{itemize}

\subsection{Vector Euclidean Spaces and Their Linear Operators.}
Given a complex linear space $\cE$ and an
ordered $*$-space $Z$, a \emph{$Z$-valued inner product} or 
\emph{$Z$-gramian}  is, by definition, a mapping  
$\cE\times \cE\ni (x,y) \mapsto [x,y]\in Z$ subject to 
the following properties:
\begin{itemize}
\item[(ve1)] $[x,x] \geq 0$ for all $x\in \cE$, and $[x,x]=0$ if and only if $x=0$.
\item[(ve2)] $[x,y]=[y,x]^*$ for all $x,y\in\cE$.
\item[(ve3)] $[x,ay_1+by_2]=a[x,y_1]+b[x,y_2]$ for all $a,b\in \mathbb{C}$ and all
  $x_1,x_2\in \cE$.
\end{itemize}

A complex linear space $\cE$ onto which a $Z$-valued inner product 
$[\cdot,\cdot]$ is specified, for a 
certain ordered $*$-space $Z$, is called a \emph{VE-space} 
(Vector Euclidean space) over $Z$.  

In any VE-space $\cE$ over an ordered $*$-space $Z$ the familiar 
\emph{polarisation formula}
\begin{equation}\label{e:polar} 4[x,y]=\sum_{k=0}^3 \iac^k [(x+\iac^k y,x+\iac^k y],
\quad x,y\in \cE,
\end{equation} holds, which shows that the $Z$-valued inner product is perfectly 
defined by the $Z$-valued quadratic map $\cE\ni x\mapsto [x,x]\in Z$.

The concept of \emph{VE-spaces isomorphism} is also naturally defined: this is just 
a linear bijection $U\colon \cE\ra \cF$, for two VE-spaces over the same ordered $*$-space 
$Z$, which is \emph{isometric}, that is, 
$[Ux,Uy]_\cF=[x,y]_\cE$ for all $x,y\in \cE$. 

A useful result for the constructions in this paper is the following lemma.

\begin{lemma}[Loynes \cite{Loynes1}]\label{l:sesqui} 
Let $Z$ be an ordered $*$-space, $\cE$ a complex vector space and
$[\cdot,\cdot]\colon \cE\times \cE \rightarrow Z$ a positive semidefinite 
sesquilinear map, 
that is, $[\cdot,\cdot]$ is linear in the second variable, conjugate linear in the 
first variable, and $[x,x]\geq 0$ for all $x\in \cE$. 
If $f\in \cE$ is such that $[f,f]=0$, then $[f,f']=[f',f]=0$ for all $f' \in \cE$.
\end{lemma}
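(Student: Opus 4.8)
The plan is to mimic the classical scalar argument behind the Cauchy--Schwarz inequality, but read off the conclusion purely from positivity in the ordered $*$-space $Z$, without ever dividing. First I would fix $f\in\cE$ with $[f,f]=0$ and an arbitrary $f'\in\cE$, and consider the one-parameter family of vectors $f+t\,f'$ for $t\in\CC$, or more conveniently scalar combinations $\alpha f+\beta f'$. Expanding $[\alpha f+\beta f',\alpha f+\beta f']$ using sesquilinearity and the hypothesis $[f,f]=0$, the diagonal term in $f$ drops out and we are left with
\begin{equation*}
0\leq [\alpha f+\beta f',\alpha f+\beta f']=\ol\alpha\beta\,[f,f']+\alpha\ol\beta\,[f',f]+|\beta|^2\,[f',f'].
\end{equation*}
Since $[f',f]=[f,f']^*$ by the sesquilinearity symmetry, the first two terms are a selfadjoint element of $Z$, so this says a certain selfadjoint element dominated by $|\beta|^2[f',f']$ lies in $Z_+$ for every choice of $\alpha,\beta$.

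The key step is to exploit the fact that $\alpha$ is completely free while the ``penalty'' term $|\beta|^2[f',f']$ does not involve $\alpha$ at all. Taking $\beta=1$ and letting $\alpha$ range over $\CC$, the expression $\ol\alpha\,[f,f']+\alpha\,[f,f']^{*}+[f',f']\geq 0$ must hold for all $\alpha\in\CC$. Writing $z=[f,f']$ and its selfadjoint real/imaginary parts, I would choose $\alpha$ to be real and then purely imaginary, and let $|\alpha|\to\infty$, so that if $z\neq 0$ the term linear in $\alpha$ would force the whole element out of the closed strict cone $Z_+$. Concretely, for real $\alpha$ we get $\alpha(z+z^*)+[f',f']\in Z_+$ for all $\alpha\in\RR$; replacing $\alpha$ by $-\alpha$ and adding shows $2[f',f']\in Z_+$ (harmless), but subtracting and rescaling by $1/\alpha$ isolates $\pm(z+z^*)$ up to a term tending to $0$, from which closedness of $Z_+$ yields $z+z^*\in Z_+\cap(-Z_+)=\{0\}$, i.e.\ $z+z^*=0$. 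Running the same argument with $\alpha$ purely imaginary gives $\iac(z-z^*)=0$ in the same way (here $z^*-z$ is selfadjoint after multiplying by $\iac$), hence $z-z^*=0$. Combining $z+z^*=0$ and $z-z^*=0$ forces $z=[f,f']=0$, and then $[f',f]=[f,f']^*=0$ as well.

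The main obstacle I anticipate is that this division-and-limit manoeuvre seems to require the topological axioms (a3)--(a4), in particular closedness of $Z_+$, whereas the lemma is stated for a bare ordered $*$-space $Z$ with only the strictness $Z_+\cap-Z_+=\{0\}$ from (a2). So the honest challenge is to extract $z=0$ using \emph{only} the algebraic order structure, with no topology and no limits. I would handle this by avoiding limits altogether: from $\alpha(z+z^*)+[f',f']\geq 0$ for all real $\alpha$, observe that if $w:=z+z^*$ were nonzero then we could not have both $\alpha w\leq [f',f']$ for all large positive $\alpha$ and $\alpha(-w)\leq[f',f']$ for all large positive $\alpha$ within a strict cone. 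The clean algebraic route is to absorb the quadratic slack by also scaling $f'$: since $[f,f]=0$, for any scalar $\lambda>0$ the vector $f'$ may be replaced in the bound, or one rescales $\alpha$ so that the $|\beta|^2[f',f']$ contribution is forced to sit below an arbitrarily small positive multiple, and then strictness of the cone finishes the job. The delicate point, and the one deserving the most care in the write-up, is arranging this rescaling so that it is genuinely valid in an arbitrary strict ordered $*$-space rather than secretly invoking a norm or a limit; this is precisely why the earlier surrogate Schwarz inequality (Lemma~\ref{l:schwarz}) and the polarisation formula \eqref{e:polar} are worth having on hand, since they reduce statements about $[f,f']$ to statements about the quadratic map $x\mapsto[x,x]$ where positivity is directly usable.
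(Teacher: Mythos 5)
The paper gives no proof of this lemma (it is quoted from Loynes), so your argument has to stand on its own. Its skeleton is the right one: expanding $[\alpha f+\beta f',\alpha f+\beta f']\geq 0$ with $[f,f]=0$, setting $\beta=1$ and letting $\alpha$ run over the reals and then over the purely imaginary numbers reduces everything to showing that a selfadjoint element $w\in Z$ with $\alpha w+c\geq 0$ for all $\alpha\in\RR$, where $c=[f',f']\geq 0$ is fixed, must vanish. (A small side point: $[f',f]=[f,f']^*$ is not part of sesquilinearity; it has to be derived from $[x,x]\geq 0$ together with the fact that $Z_+$ consists of selfadjoint elements, by the same polarisation trick as in the proof of Lemma~\ref{l:twopos}.(1) — but that is easily supplied.)

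The genuine gap is the final step. You correctly sense that the limit argument needs $Z_+$ closed, which axioms (a1)--(a2) do not give, and you propose to replace it by a purely algebraic appeal to strictness. That cannot be made to work: strictness only says $Z_+\cap(-Z_+)=\{0\}$ and does not prevent $Z_+$ from containing a whole affine line $\{c+\alpha w:\alpha\in\RR\}$ with $w\neq 0$. Concretely, take $Z=\CC^2$ with coordinatewise conjugation and $Z_+=\{(s,t)\in\RR^2 : s>0\}\cup\{(0,0)\}$, a strict convex cone of selfadjoint elements, and on $\cE=\CC^2$ the Hermitian sesquilinear form with Gram matrix $G_{11}=0$, $G_{12}=G_{21}=(0,1)$, $G_{22}=(1,0)$; then $[x,x]=(|x_2|^2,\,2\operatorname{Re}(\ol{x_1}x_2))\in Z_+$ for every $x$, yet $[e_1,e_1]=0$ while $[e_1,e_2]=(0,1)\neq 0$. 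So no algebraic manoeuvre can close the gap. Your proposed rescaling of $f'$ is moreover circular: replacing $f'$ by $\lambda f'$ multiplies $[f,f']$ by $\lambda$ and $[f',f']$ by $|\lambda|^2$, which only reparametrises the family $\ol\alpha\beta z+\alpha\ol\beta z^*+|\beta|^2c$ you already have; and Lemma~\ref{l:schwarz} is of no help, since it lives in the topological setting and presupposes a genuine gramian. The honest conclusion is that your first, ``division-and-limit'' argument \emph{is} the proof, valid exactly when $Z_+$ is closed in a compatible topology (then $tc\pm w\geq 0$ for all $t>0$ gives $\pm w\in\ol{Z_+}=Z_+$, hence $w=0$ by strictness), i.e.\ in Loynes's admissible spaces or under an Archimedean-type hypothesis on the cone; for a bare ordered $*$-space satisfying only (a1)--(a2) the asserted conclusion is simply not available.
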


Given two VE-spaces $\cE$ and $\cF$, over the same ordered $*$-space $Z$, 
one can consider the vector space $\cL(\cE,\cF)$ of all
linear operators $T\colon \cE\ra\cF$. The operator $T$ is 
called \emph{bounded} if there exists $C\geq 0$ such that
\begin{equation}\label{e:bounded} [Te,Te]_\cF\leq C^2 [e,e]_\cE,\quad e\in\cE.
\end{equation} Note that the inequality \eqref{e:bounded} is in the sense of the order 
of $Z$ uniquely determined by the cone $Z_+$, 
see the axiom (a2). 
The infimum of these scalars is denoted by $\|T\|$ and it is called 
the \emph{operator norm} of $T$, more precisely,
\begin{equation}\label{e:opnorm}\|T\|=\inf\{C>0\mid [Te,Te]_\cF\leq C^2 [e,e]_\cE,\mbox{ for all }e\in\cE\}.
\end{equation} Let $\cB(\cE,\cF)$ denote the collection of all 
bounded linear operators $T\colon \cE\ra\cF$. Then $\cB(\cE,\cF)$ is a linear space 
and $\|\cdot\|$ is a norm on it, cf.\ Theorem 1 in \cite{Loynes2}. In addition, if $T$ 
and $S$ are bounded linear operators acting between appropriate VE-spaces over 
the same ordered $*$-space $Z$, then $\|TS\|\leq \|T\| \|S\|$, in particular $TS$ is 
bounded. If $\cE=\cF$ then $\cB(\cE)=\cB(\cE,\cE)$ is a normed algebra, more 
precisely, the operator norm is submultiplicative. 

A linear operator $T\in\cL(\cE,\cF)$ is 
called \emph{adjointable} if there exists $T^*\in\cL(\cF,\cE)$ such that
\begin{equation}\label{e:adj} [Te,f]_\cF=[e,T^*f]_\cE,\quad e\in\cE,\ f\in\cF.
\end{equation} The operator $T^*$, if it exists, is uniquely determined by 
$T$ and called its \emph{adjoint}.
Since an analog of the Riesz Representation Theorem for VE-spaces
does not exist, in general, 
there may be not so many adjointable operators. We denote by 
$\cL^*(\cE,\cF)$ the vector space of all adjointable operators from
$\cL(\cE,\cF)$. 
Note that $\cL^*(\cE)=
\cL^*(\cE,\cE)$ is a $*$-algebra with respect to the involution $*$ determined 
by the operation of taking the adjoint.

An operator $A\in\cL(\cE)$ is called \emph{selfadjoint} if
\begin{equation}\label{e:self} [Ae,f]=[e,Af],\quad e,f\in \cE.
\end{equation} Clearly, any selfadjoint operator $A$ is adjointable and $A=A^*$.
By the polarisation formula \eqref{e:polar}, $A$ is selfadjoint if and only if
\begin{equation}\label{e:self2} [Ae,e]=[e,Ae],\quad e\in\cE.
\end{equation}


An operator $A\in\cL(\cE)$ is \emph{positive} if
\begin{equation}\label{e:pos} [Ae,e]\geq 0,\quad e\in\cE.\end{equation}
Since the cone $Z_+$ consists of selfadjoint elements only, 
any positive operator is selfadjoint and hence adjointable.

Let $\cB^*(\cE)$ denote the collection of all adjointable bounded linear operators 
$T\colon \cE\ra\cE$. Then $\cB^*(\cE)$ is a pre-$C^*$-algebra, that is, it is a 
normed $*$-algebra with the property
\begin{equation}\label{e:prec} \|A^*A\|=\|A\|^2,\quad A\in\cB^*(\cE),
\end{equation} cf.\ Theorem 4 in \cite{Loynes2}.
 In particular, the involution $*$ is isometric on $\cB^*(\cE)$, that is, $\|A^*\|=\|A\|$ 
 for all $A\in\cB^*(\cE)$.
 
 If $A\in\cB^*(\cE)$ can be factored $A=T^*T$, for some $T\in\cB^*(\cE)$, 
 then $A$ is  positive. If, in addition, $\cB^*(\cE)$ is complete, and hence 
 a $C^*$-algebra, and $A\in\cB^*(\cE)$ 
is positive, then $A=T^*T$ for some $T\in\cB^*(\cE)$, 
cf.\ Lemma 2 in \cite{Loynes2}.

\subsection{Vector Hilbert Spaces and Their Linear Operators.}\label{ss:vhs}
If $Z$ is a topologically ordered $*$-space, 
any VE-space $\cE$ can be made in a natural way
into a Hausdorff separated locally convex space by considering the weakest locally 
convex topology on $\cE$ that makes the mapping 
$E\ni h\mapsto [h,h]\in Z$ continuous, more 
precisely, letting $\{N_j\}_{j\in\cJ}$ be the collection of convex and solid 
neighbourhoods of the origin in $Z$ as in axiom (a5), the collection of sets
\begin{equation}\label{e:ujex}U_j=\{x\in \cE\mid [x,x]\in N_j\},\quad j\in\cJ,
\end{equation} is a topological base of neighbourhoods of the origin of $\cE$ that 
linearly generates the weakest locally convex topology on $\cE$ that 
makes all mappings $\cE\ni h\mapsto [h,h] \in Z$ continuous, cf.\ 
Theorem 1 in \cite{Loynes1}. In terms of seminorms, this topology can 
be defined in the following way: let $\{p_j\}_{j\in \cJ}$ be a family of increasing 
seminorms defining the topology of $Z$ as in axiom (a5$^\prime$) and let
\begin{equation}\label{e:qujeh} \tl{p_j}(h)=p_j([h,h])^{1/2},\quad h\in \cE,\ j\in\cJ.
\end{equation}
Then each $\tl{p_j}$ is a seminorm on $\cE$ and its topology 
is fully determined by the family $\{\tl{p_j}\}_{j\in\cJ}$, see Lemma~1.3 in \cite{AyGheondea2}. 
With respect to this topology, we call $\cE$ a 
\emph{topological VE-space} over $Z$. 

We first prove a surrogate of Schwarz Inequality that we will use several times
in this article.

\begin{lemma}\label{l:schwarz}
Let $\cE$ be a topological VE-space over the topologically ordered $*$-space 
$Z$ and $p\in S(Z)$. Then
\begin{equation}\label{e:schwarz}
p([e,f])\leq 4p([e,e])^{1/2}p([f,f])^{1/2}=4\tl{p}(e)\tl{p}(f), \quad e,f\in\cE.
\end{equation}
\end{lemma}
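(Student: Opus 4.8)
The plan is to forgo any direct Schwarz-type manipulation — which would require dividing by $[f,f]$, an operation unavailable for $Z$-valued inner products — and instead to work purely with the order relation on $Z$ and the fact that $p$ is an increasing seminorm, producing first an \emph{additive} estimate and then upgrading it to the desired \emph{multiplicative} one by a scaling optimisation.

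First I would extract order bounds from positive semidefiniteness. Expanding $[e\pm f,e\pm f]\ge 0$ and using (ve2) to write $[f,e]=[e,f]^*$ gives
\[
-\bigl([e,e]+[f,f]\bigr)\le [e,f]+[e,f]^*\le [e,e]+[f,f].
\]
Replacing $f$ by $\iac f$ leaves $[f,f]$ unchanged and turns $[e,f]+[e,f]^*$ into $\iac\bigl([e,f]-[e,f]^*\bigr)$, so the same argument yields the analogous two-sided bound for $\iac\bigl([e,f]-[e,f]^*\bigr)$, both bounds having the selfadjoint, positive element $c:=[e,e]+[f,f]$ on the right.

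Second, I would record a small order lemma turning a two-sided bound into a seminorm estimate: if $b=b^*$ and $-c\le b\le c$ with $c\ge 0$, then $0\le b+c\le 2c$ and $0\le c-b\le 2c$, whence increasingness and absolute homogeneity of $p$ give $p(b+c)\le 2p(c)$ and $p(c-b)\le 2p(c)$, so $p(b)\le\tfrac12\bigl(p(b+c)+p(c-b)\bigr)\le 2p(c)$. Applying this to $b=[e,f]+[e,f]^*$ and to $b=\iac([e,f]-[e,f]^*)$, together with subadditivity $p(c)\le p([e,e])+p([f,f])$ and $p(\iac\,\cdot)=p(\cdot)$, I obtain
\[
p\bigl([e,f]+[e,f]^*\bigr)\le 2\bigl(p([e,e])+p([f,f])\bigr),\qquad p\bigl([e,f]-[e,f]^*\bigr)\le 2\bigl(p([e,e])+p([f,f])\bigr).
\]
Summing these via $2[e,f]=\bigl([e,f]+[e,f]^*\bigr)+\bigl([e,f]-[e,f]^*\bigr)$ produces the additive estimate $p([e,f])\le 2\bigl(p([e,e])+p([f,f])\bigr)$.

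Finally I would homogenise. For real $t>0$ one has $[te,t^{-1}f]=[e,f]$, $[te,te]=t^2[e,e]$ and $[t^{-1}f,t^{-1}f]=t^{-2}[f,f]$, so the additive estimate applied to $te,t^{-1}f$ reads $p([e,f])\le 2\bigl(t^2p([e,e])+t^{-2}p([f,f])\bigr)$ for every $t>0$; optimising the right-hand side (the minimum of $at^2+bt^{-2}$ being $2\sqrt{ab}$) gives precisely $p([e,f])\le 4\,p([e,e])^{1/2}p([f,f])^{1/2}$, with the degenerate cases $p([e,e])=0$ or $p([f,f])=0$ handled by letting $t\to\infty$ or $t\to 0$ to force $p([e,f])=0$. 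The genuine obstacle — and the source of the constant $4$ rather than the frequently (and, as the authors note, erroneously) claimed $2$ — is exactly that $p$ is only an increasing seminorm, neither $*$-invariant nor submultiplicative, so $[e,f]$ is accessible only through its selfadjoint and skew parts estimated separately: the factor $2$ lost in the order lemma $p(b)\le 2p(c)$ compounds with the factor $2$ from the scaling optimisation. I expect the only steps needing real care to be the justification of that order lemma and the boundary cases of the optimisation.
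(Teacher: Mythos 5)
Your proof is correct. The overall strategy coincides with the paper's: first establish the additive estimate $p([e,f])\leq 2\bigl(p([e,e])+p([f,f])\bigr)$, then homogenise by replacing $(e,f)$ with $(te,t^{-1}f)$ and optimising over $t>0$, which is exactly the paper's final step (with $\lambda=t^2$) and produces the same constant $4$. Where you diverge is in the derivation of the additive estimate: the paper expands the polarisation identity $4[e,f]=\sum_{k=0}^3\iac^k[e+\iac^kf,e+\iac^kf]$ and bounds each of the four terms via $0\leq[h+k,h+k]\leq 2([h,h]+[k,k])$ together with increasingness of $p$; you instead split $2[e,f]$ into its selfadjoint part $[e,f]+[e,f]^*$ and skew part $[e,f]-[e,f]^*$, bound each between $\pm\bigl([e,e]+[f,f]\bigr)$ using $[e\pm f,e\pm f]\geq 0$ (and the substitution $f\mapsto\iac f$), and pass to the seminorm via the order lemma that $-c\leq b\leq c$ with $b=b^*$, $c\geq 0$ forces $p(b)\leq 2p(c)$. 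The two routes are essentially polarisation in two different packagings and cost the same factor of $2$ at this stage; your order lemma is perhaps slightly more transparent about \emph{where} that factor comes from (writing $2b=(b+c)-(c-b)$ with both summands order-dominated by $2c$), while the paper's version keeps everything inside gramians of explicit vectors. All the delicate points — selfadjointness of the two parts, absolute homogeneity $p(\iac\,\cdot)=p(\cdot)$, and the degenerate cases $p([e,e])=0$ or $p([f,f])=0$ in the optimisation — are handled correctly in your write-up.
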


\begin{proof}
For arbitrary $h, k \in \cE$ we have
\begin{equation*}[h \pm k, h \pm k] = [h, h] + [k, k] \pm [h, k] \pm [k, h] \geq 0,
\end{equation*}
in particular,
\begin{equation*} [h, k] + [k, h] \leq [h, h] + [k, k],\end{equation*}
and
\begin{equation}\label{e:zeqo} 
0 \leq [h + k, h + k] \leq [h - k, h - k] + [h + k, h + k] = 2([h, h] + [k, k]).
\end{equation}
Taking into account that $p \in S(Z)$ is increasing, from \eqref{e:zeqo}
it follows that
\begin{equation}\label{e:pehak}
p([h + k, h + k]) \leq 2\bigl(p([h, h]) + p([k, k])\bigr).
\end{equation}
Let now $e, f \in\cE$ be arbitrary. By the polarisation formula \eqref{e:polar} 
and \eqref{e:pehak}, we have
\begin{align*}
p([e, f]) & = p\bigl(\frac{1}{4} \sum_{k=0}^3 \iac^k[e + \iac^kf, e + \iac^kf]\bigr)
 \leq \frac{1}{4} \sum_{k=0}^3 p([e + \iac^kf, e + \iac^kf]) \\
& \leq \frac{2}{4} \sum_{k=0}^3 \bigl(p([e,e]) + p([\iac^kf,  \iac^kf])\bigr)
 = 2\bigl(p([e,e])+p([f,f])\bigr).
\end{align*}
Letting $\lambda > 0$ be arbitrary and changing $e$ with $\sqrt{\lambda}e$ and $f$ with 
$f/\sqrt{\lambda}$ in the previous inequality,
we get
\begin{equation*}
p([e, f]) \leq 2\bigl(\lambda p([e, e]) + \lambda^{-1}p([f, f])\bigr),
\end{equation*}
hence, since the left hand side does not depend on $\lambda$, it follows
\begin{equation*}p([e, f]) \leq \inf_{\lambda>0}2\bigl(\lambda p([e, e]) + \lambda^{-1}p([f, f])\bigr)
 = 4 p([e, e])^{1/2} p([f, f])^{1/2}.\qedhere
\end{equation*}
\end{proof}

Let $\cE$ and $\cF$ be two topological VE-spaces over the same topologically 
ordered $*$-space $Z$. 
Clearly, any bounded linear operator $T\colon\cE\ra\cF$, with definition as in
\eqref{e:opnorm}, is continuous.

If both of $Z$ and $\cE$ are complete with respect to their specified
locally convex topologies, then $\cE$ is called a \emph{VH-space} 
(Vector Hilbert space). Any topological 
VE-space $\cE$ on a topologically ordered $*$-space 
can be embedded as a dense subspace of a VH-space $\cH$, 
uniquely determined up to an isomorphism, cf.\ Theorem 2 in
\cite{Loynes1}. Note that, given two VH-spaces $\cH$ and $\cK$, over the same 
admissible space $Z$, any isomorphism $U\colon \cH\ra\cK$ 
in the sense of VE-spaces, is automatically bounded and adjointable, hence 
$U\in\cB(\cH,\cK)$, and it is natural to call this operator \emph{unitary}.

If $\cE$ and $\cF$ are topological 
VE-spaces over the same admissible space 
$Z$ and $\cF$ is complete, that is, a VH-space, then $\cB(\cE,\cF)$ is a 
Banach space, with respect to the operator norm. In particular, if $\cE$ is a VH-space, 
then $\cB^*(\cE)$ is a $C^*$-algebra. Note that, in this case, the usual notion 
of $C^*$-algebra positive elements in $\cB^*(\cE)$ coincides with that of positive 
operators in the sense of \eqref{e:pos}, \cite{Loynes1}. 

For topological VE-spaces $\cE$ and $\cF$ over the same 
topologically ordered $*$-space $Z$, we denote 
the space of all linear continuous operators $T\colon\cE\ra\cF$ by $\cL_{\mathrm{c}}(\cE,\cF)$, 
and in particular, $\cL_{\mathrm{c}}(\cE,\cE)$ by $\cL_{\mathrm{c}}(\cE)$. The 
$*$-algebra of all continuous and continuously adjointable linear operators 
$T\colon\cE\ra\cF$ are denoted by $\cL^*_{\mathrm{c}}(\cE,\cF)$, and 
$\cL^*_{\mathrm{c}}(\cE)=\cL^*_{\mathrm{c}}(\cE,\cE)$, see \cite{AyGheondea2}.

A subspace $\cM$ of a VH-space $\cH$ is \emph{orthocomplemented} or 
\emph{accessible} if every element $x\in\cH$ can be written as 
$x=y+z$ where $y\in\cM$ and $z$ is such that $[z,m]=0$ for all 
$m\in\cM$, that is, $z$ is in the \emph{orthogonal companion} 
$\cM^{\bot}$ of $\cM$. If such a decomposition exists 
it is unique. Also, any orthocomplemented subspace is closed. A closed subspace $\cM$
of $\cH$ is orthocomplemented if and only if it is the range of a selfadjoint projection, that is,
an adjointable linear operator $P\colon\cH\ra\cH$ such that $P^2=P=P^*$. Note that any
selfadjoint projection is a contraction, in particular it is bounded.

\section{Weakly Positive Semidefinite Kernels}\label{s:wpsk}

\subsection{Hermitian Kernels.}\label{ss:hk}
Let $X$ be a nonempty set and $Z$ an ordered $*$-space. 
A map $\fk\colon X\times X\ra Z$ is called a \emph{$Z$-valued kernel on 
$X$}. If no confusion may arise we also say simply that $\fk$ is a kernel.
The \emph{adjoint} kernel $\fk^*\colon X\times X\ra Z$ is defined by 
$\fk^*(x,y)=\fk(y,x)^*$, for $x,y\in X$. The kernel $\fk$ is 
called \emph{Hermitian} if $\fk^*=\fk$.

Consider $\CC^X$ the complex vector space of all functions $f\colon X\ra \CC$, as 
well as its subspace $\CC^X_0$ consisting of those functions $f\in\CC^X$ with finite 
support.
Given a $Z$-valued kernel $\fk$ on $X$, a pairing $[\cdot,\cdot]_\fk\colon \CC_0^X
\times \CC_0^X\ra Z$ can be defined
\begin{equation}\label{e:gk} [f,g]_\fk=\sum_{x,y\in X} \overline{f(x)}g(y)\fk(x,y),\quad 
f,g\in\CC^X_0.
\end{equation}
The pairing $[\cdot,\cdot]_\fk$ is linear in the second variable and conjugate linear in the 
first variable. If, in addition, $\fk=\fk^*$, then the pairing $[\cdot,\cdot]_\fk$ is 
\emph{Hermitian}, that is,
\begin{equation}\label{e:herm} [f,g]_\fk=[g,f]_\fk^*,\quad f,g\in\CC^X_0.
\end{equation}
Conversely, if the pairing $[\cdot,\cdot]_\fk$ is Hermitian then $\fk=\fk^*$.

A \emph{convolution} operator $K\colon \CC^X_0\ra Z^X$, where $Z^X$ is the 
complex vector space of all functions $g\colon X\ra Z$, can be associated to the 
$Z$-kernel $\fk$ by
\begin{equation}\label{e:conv} (Kg)(x)=\sum_{y\in X}g(y)\fk(x,y),\quad f\in \CC^X_0.
\end{equation}
Clearly, $K$ is a linear operator. A natural relation exists between the paring 
$[\cdot,\cdot]_\fk$ and the convolution operator $K$, more precisely,
\begin{equation}\label{e:pconv} [f,g]_\fk=\sum_{x\in X} \overline{f(x)}(Kg)(x),\quad 
f,g\in\CC^X_0.
\end{equation}
Therefore, it is easy to see from here that the kernel $\fk$ is Hermitian if and only if 
the pairing $[\cdot,\cdot]_\fk$ is Hermitian.

Given a natural number $n$, a $Z$-valued kernel $\fk$ is called \emph{weakly
$n$-positive} 
if for all $x_1,\ldots,x_n\in X$ and all $t_1,\ldots,t_n\in\CC$ we have
\begin{equation}\label{e:np} \sum_{j,k=1}^n \overline{t_k}t_j \fk(x_k,x_j)
\geq 0.
\end{equation}
The kernel $\fk$ is called \emph{weakly positive semidefinite} if it is $n$-positive for all 
$n\in\NN$.

\begin{lemma}\label{l:twopos} Let the $Z$-kernel $\fk$ on $X$ be weakly
$2$-positive. 
Then:

\nr{1} $\fk$ is Hermitian.

\nr{2} If, for some $x\in X$, $\fk(x,x)=0$, then $\fk(x,y)=0$ for all $y\in X$.

\nr{3} There exists a unique decomposition $X=X_0\cup X_1$, 
$X_0\cap X_1=\emptyset$, such that $\fk(x,y)=0$ for all $x,y\in X_0$ and 
$\fk(x,x)\neq 0$ for all $x\in X_1$.
\end{lemma}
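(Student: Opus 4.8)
The plan is to exploit the weak $2$-positivity condition \eqref{e:np} with $n=2$ and translate it into statements about the Hermitian pairing machinery already developed. For part (1), I would take $n=2$ with arbitrary $x_1,x_2\in X$ and test the defining inequality against specific choices of $t_1,t_2\in\CC$. Since each expression $\sum_{j,k}\overline{t_k}t_j\fk(x_k,x_j)$ lies in $Z_+$ and elements of $Z_+$ are selfadjoint by axiom (a2), the whole sum equals its own adjoint for every choice of coefficients. Expanding the $n=2$ sum gives
\begin{equation*}
|t_1|^2\fk(x_1,x_1)+\overline{t_1}t_2\fk(x_1,x_2)+\overline{t_2}t_1\fk(x_2,x_1)+|t_2|^2\fk(x_2,x_2),
\end{equation*}
and applying the involution (which fixes this selfadjoint element and sends $\fk(x,y)$ to $\fk^*(y,x)=\fk(y,x)^*$) yields a companion identity. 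Comparing the two for varying $t_1,t_2$ — for instance $t_1=t_2=1$, then $t_1=1,t_2=\iac$ — isolates the cross terms and forces $\fk(x_1,x_2)=\fk(x_2,x_1)^*$, which is exactly $\fk=\fk^*$. Equivalently, and perhaps more cleanly, one observes that weak $2$-positivity makes the pairing $[\cdot,\cdot]_\fk$ on the two-dimensional space spanned by the indicator functions at $x_1,x_2$ a positive semidefinite sesquilinear map into $Z$, and positivity forces $[f,f]_\fk\in Z_+$ to be selfadjoint, from which the Hermitian property \eqref{e:herm} and hence $\fk=\fk^*$ follow via the discussion surrounding \eqref{e:pconv}.

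For part (2), the key is to reduce to Lemma~\ref{l:sesqui}, the surrogate for the fact that a null vector in a positive semidefinite gramian is orthogonal to everything. Having established in (1) that $\fk$ is Hermitian, the pairing $[\cdot,\cdot]_\fk$ on $\CC_0^X$ is a positive semidefinite sesquilinear map (positivity on finitely supported functions being precisely weak positive semidefiniteness; here I only need the $2$-positive slice, so I would restrict to the span of two indicator functions to stay within the hypothesis). Let $\delta_x\in\CC_0^X$ be the indicator of the point $x$, so that $[\delta_x,\delta_x]_\fk=\fk(x,x)$ and $[\delta_x,\delta_y]_\fk=\fk(x,y)$. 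The hypothesis $\fk(x,x)=0$ says $[\delta_x,\delta_x]_\fk=0$, so $\delta_x$ is a null vector. By Lemma~\ref{l:sesqui} applied to the restriction of $[\cdot,\cdot]_\fk$ to $\lin\{\delta_x,\delta_y\}$, it follows that $[\delta_x,\delta_y]_\fk=0$, that is, $\fk(x,y)=0$, for every $y\in X$. One subtlety to watch is that Lemma~\ref{l:sesqui} requires positive semidefiniteness of the pairing on the relevant space; since weak $2$-positivity only controls pairs of points, I restrict attention to the two-dimensional subspace $\lin\{\delta_x,\delta_y\}$, where the pairing is genuinely positive semidefinite by the $2$-positive hypothesis, and invoke the lemma there.

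For part (3), I would simply define the partition by the dichotomy that (2) makes available: set $X_0=\{x\in X\mid \fk(x,x)=0\}$ and $X_1=\{x\in X\mid \fk(x,x)\neq 0\}$. These are manifestly disjoint with union $X$, so existence and uniqueness of the decomposition are immediate from the definition. The only content to verify is that $\fk(x,y)=0$ for all $x,y\in X_0$, and this is a direct consequence of (2): for $x\in X_0$ we have $\fk(x,x)=0$, whence $\fk(x,y)=0$ for \emph{all} $y\in X$, in particular for $y\in X_0$. I expect parts (2) and (3) to be essentially routine once (1) is in hand, so the main obstacle is the bookkeeping in (1): one must be careful that the selfadjointness of the quadratic expressions, not positivity per se, is what delivers the Hermitian symmetry, and that the chosen coefficients $t_1,t_2$ genuinely separate the cross terms $\fk(x_1,x_2)$ and $\fk(x_2,x_1)$. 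Using the polarisation-style substitutions $t_2\in\{1,\iac\}$ handles this cleanly.
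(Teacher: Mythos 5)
Your proposal is correct. Parts (1) and (3) follow essentially the same route as the paper: for (1), both arguments expand the $n=2$ instance of \eqref{e:np}, use that elements of $Z_+$ are selfadjoint (together with $\fk(x_i,x_i)\geq 0$, which comes from the $t_2=0$ specialisation, so that the diagonal terms are fixed by the involution), and then separate the cross terms with the substitutions $(t_1,t_2)=(1,1)$ and $(1,\iac)$; for (3) both simply set $X_0=\{x\mid \fk(x,x)=0\}$ and invoke (2). Where you genuinely diverge is part (2). The paper argues directly from the inequality \eqref{e:kxyin}: it splits into the cases $\fk(y,y)=0$ and $\fk(y,y)\neq 0$, and in the second case kills the cross terms by replacing $s$ with $ns$, $n\in\ZZ$, and letting the left-hand side grow against a fixed lower bound. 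You instead observe that weak $2$-positivity makes the restriction of the pairing \eqref{e:gk} to $\lin\{\delta_x,\delta_y\}$ a positive semidefinite sesquilinear map into $Z$, so that $\fk(x,x)=[\delta_x,\delta_x]_\fk=0$ forces $[\delta_x,\delta_y]_\fk=\fk(x,y)=0$ by Lemma~\ref{l:sesqui}. Your route is shorter and avoids both the case analysis and the informal scaling step, at the cost of leaning on the quoted Loynes lemma (stated in the paper without proof); your restriction to the two-dimensional span is exactly the right precaution, since only $2$-positivity is assumed and the pairing need not be positive semidefinite on all of $\CC^X_0$. Both arguments are valid; yours is arguably the cleaner of the two.
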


\begin{proof} (1) Clearly, weak $2$-positivity implies weak $1$-positivity, 
hence $\fk(x,x)\geq 0$ 
for all $x\in X$. Let $x,y\in X$ be arbitrary. 
Since $\fk$ is weakly $2$-positive, for any $s,t\in\CC$ we have
\begin{equation}\label{e:kxy} |s|^2 \fk(x,x)+|t|^2\fk(y,y)+\overline{s}t 
\fk(x,y)+s\overline{t}\fk(y,x)\geq 0.
\end{equation}
Since the sum of the first two terms in \eqref{e:kxy} is in $Z_+$ and taking into 
account that $Z_+$ consists of selfadjoint elements only, it follows that 
the sum of the last two terms in \eqref{e:kxy} is selfadjoint, that is,
\begin{equation*} \overline{s}t 
\fk(x,y)+s\overline{t}\fk(y,x)=\overline{t}s \fk(x,y)^*
+\overline{s}t \fk(y,x)^*.
\end{equation*} Letting $s=t=1$ and then $s=1$ and $t=\iac$, 
it follows that $\fk(y,x)=\fk(x,y)^*$.

(2) Assume that $\fk(x,x)=0$ and let $y\in X$ be arbitrary. 
From \eqref{e:kxy} it follows that for all $s,t\in\CC$ we have
\begin{equation}\label{e:kxyin} \overline{s}t 
\fk(x,y)+s\overline{t}\fk(y,x) \geq -|t|^2\fk(y,y).
\end{equation}
We claim that for all $s,t\in\CC$ we have
\begin{equation}\label{e:kxyout} \overline{s}t 
\fk(x,y)+s\overline{t}\fk(y,x)=0.
\end{equation}
To prove this, note that for $t=0$ the equality \eqref{e:kxyout} it trivially true. 
If $t\in\CC\setminus\{0\}$, note that we can distinguish two cases: 
first, if $\fk(y,y)=0$, then from \eqref{e:kxyin} it follows $\overline{s}t 
\fk(x,y)+s\overline{t}\fk(y,x)\geq 0$ and then, changing $t$ to $-t$ 
the opposite inequality holds, hence \eqref{e:kxyout}. The second case is $
\fk(y,y)\neq 0$ when we observe that the right hand side in \eqref{e:kxyin} does not depend 
on $s$ hence, replacing $s$ by $ns$, $n\in\ZZ$, a routine reasoning 
shows that \eqref{e:kxyout} must hold as well.

Finally, in \eqref{e:kxyout} we first let $s=1=t$ and then $s=1$ and 
$t=\iac$ and solve for $\fk(x,y)$ which should be $0$.

(3) Denote $X_0=\{x\in X\mid \fk(x,x)=0\}$ and let $X_1=X\setminus X_0$. Then 
use (2) in order to obtain $\fk(x,y)=0$ for all $x,y\in X_0$.
\end{proof}

\subsection{Weak Linearisations}\label{ss:l}
 Given an ordered $*$-space $Z$ and a $Z$-valued kernel $\fk$ on a nonempty set 
 $X$,  a \emph{weak VE-space linearisation}, or \emph{weak Kolmogorov 
decomposition} of $\fk$ is, by definition, a pair $(\cE;V)$, 
subject to the following conditions:
  \begin{itemize}
  \item[(vel1)] $\cE$ is a VE-space over the ordered $*$-space $Z$.
  \item[(vel2)] $V\colon X\ra\cE$ satisfies $\fk(x,y)=[V(x),V(y)]_\cE$ for all $x,y\in X$.
  \end{itemize}
  If, in addition, the following condition holds
  \begin{itemize}
  \item[(vel3)] $\lin V(X)=\cE$,
  \end{itemize}
then the weak VE-space linearisation $(\cE;V)$ is called \emph{minimal}. 

Two weak VE-space linearisations $(V;\cE)$ and $(V';\cE')$ of the same kernel $\fk$ are 
called \emph{unitarily equivalent} if there exists a unitary operator 
$U\colon \cE\ra\cE'$ such that $UV(x)=V'(x)$ for all $x\in X$.

\begin{remarks} \label{r:unique}
(1) Note that any two minimal 
weak VE-space linearisations $(\cE;V)$ and $(\cE^\prime;V^\prime)$ of the same $Z$-kernel 
$\fk$ are unitarily equivalent. The proof
follows in the usual way: if $(\cE';V')$ 
is another minimal weak VE-space linearisation of 
$\fk$, for arbitrary $x_1,\ldots,x_m,y_1,\ldots,y_n\in X$ and arbitrary 
$t_1,\ldots,t_m,s_1,\ldots,s_n\in \CC$, we have
\begin{align*} [\sum_{j=1}^m t_jV(x_j), \sum_{k=1}^m s_k V(y_k)]_\cE 
& = \sum_{j=1}^m \sum_{k=1}^n s_k\ol{t_j}[V(x_j),V(y_k)]_\cE
 = \sum_{k=1}^n\sum_{j=1}^m s_k\ol{t_j}
\fk(x_j,y_k) \\
& =   \sum_{j=1}^m \sum_{k=1}^n s_k\ol{t_j}[V'(x_j),V'(y_k)]_{\cE'} 
 = [\sum_{j=1}^m t_jV'(x_j), \sum_{k=1}^n s_kV'(y_k)]_{\cE'},
\end{align*} hence $U\colon \lin V(X)\ra \lin V'(X)$ defined by 
\begin{equation}\label{e:udef}\sum_{j=1}^m 
t_jV(x_j)\mapsto \sum_{j=1}^m t_jV'(x_j),\quad x_1,\ldots,x_m\in X,\ t_1,\ldots,t_m\in\CC,
\ m\in\NN,\end{equation} 
is a correctly defined linear operator, 
isometric, everywhere defined, and onto. Thus, $U$ is a 
VE-space isomorphism $U\colon \cE\ra\cE'$ and $UV(x)=V'(x)$ for all $x\in X$, by 
construction.

(2) From any weak VE-space linearisation $(\cE;V)$ of $\fk$ one can make a minimal one in a canonical way, more precisely, letting $\cE_0=\lin V(X)$ and 
$V_0\colon X\ra\cE_0$ defined by $V_0(x)=V(x)$, $x\in X$, it follows that 
$(\cE_0;V_0)$ is a minimal weak VE-space linearisation of $\fk$.
\end{remarks}

Let us assume now that $Z$ is an admissible space and $\fk$
is a $Z$-kernel on a set $X$. A \emph{weak VH-space linearisation} of $\fk$ is a
linearisation $(\cH;V)$ of $\fk$ such that $\cH$ is a VH-space. The weak VH-space
linearisation $(\cH;V)$ is called \emph{topologically minimal} if
\begin{itemize}
\item[(vhl3)] $\lin V(X)$ is dense in $\cH$.
\end{itemize}
Two weak VH-space linearisations $(\cH;V)$ and $(\cH';V')$ of the same $Z$-kernel $\fk$ 
are called 
\emph{unitary equivalent} if there exists a unitary operator $U\in\cB^*(\cH,\cH')$ 
such that $UV(x)=V'(x)$ for all $x\in X$. 

\begin{remarks}\label{r:uniqueh}
(a) Any two topologically minimal weak VH-space linearisations of the 
same $Z$-kernel are unitarily equivalent. Indeed, letting $(\cH;V)$ and $(\cH^\prime;V^\prime)$ be 
two minimal weak VH-space linearisations of the $Z$-kernel $\fk$, 
we proceed as in Remark~\ref{r:unique}.(a) and
define $U\colon \lin V(X)\ra\lin V^\prime(X)$ as in \eqref{e:udef}. 
Since $U$ is isometric, it is bounded in the sense of
\eqref{e:bounded}, hence continuous, and then $U$ can be uniquely extended to an isometric
operator $U\colon \cH\ra\cH^\prime$. Since $\lin V^\prime(X)$ is dense in $\cH^\prime$ and $U$
has closed range, it follows that $U$ is surjective, hence $U\in\cB^*(\cH,\cH^\prime)$ is unitary and,
by its definition, see \eqref{e:udef}, we have $UV(x)=V^\prime(x)$ for all $x\in X$. 

(b) From any weak VH-space linearisation $(\cH;V)$ of 
$\fk$ one can make, in a canonical way, a topologically minimal weak VH-space linearisation 
$(\cH_0;V_0)$ by letting $\cH_0=\ol{\lin V(X)}$ and $V_0(x)=V(x)$ for all $x\in X$.
\end{remarks}
  
  \begin{theorem}\label{t:velin} \emph{(a)} Given an ordered $*$-space $Z$ and a 
  $Z$-valued kernel $\fk$ on a nonempty set $X$, 
the following assertions are equivalent:
\begin{itemize} 
\item[(1)] $\fk$ is positive semidefinite.
\item[(2)] $\fk$ admits a weak VE-space linearisation $(\cE;V)$.
\end{itemize}
Moreover, if exists, 
a weak VE-space linearisation $(\cE;V)$ can always be chosen such that 
$\cE\subseteq Z^X$, that is, consisting of functions $f\colon X\ra Z$ only, 
and minimal.

\emph{(b)} If, in addition, $Z$ is an admissible space 
and $\fk\colon X\times X\ra Z$ is a kernel, then any of the assertions 
\emph{(1)} and \emph{(2)} is equivalent with:
\begin{itemize}
\item[(3)] $\fk$ admits a weak VH-space linearisation $(\cH;V)$.
\end{itemize}
Moreover, if exists, 
a weak VH-space linearisation $(\cH;V)$ can always be chosen such that 
$\cH\subseteq Z^X$ and topologically minimal.
\end{theorem}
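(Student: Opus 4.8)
The plan is to prove the routine implications directly and then carry out the standard Kolmogorov/GNS-type construction, taking care to realise all spaces concretely inside $Z^X$ rather than as abstract quotients and completions.

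For part (a), the implication (2)$\Rightarrow$(1) is immediate: if $(\cE;V)$ is a weak VE-space linearisation then for any $x_1,\dots,x_n\in X$ and $t_1,\dots,t_n\in\CC$,
\[
\sum_{j,k=1}^n\overline{t_k}t_j\fk(x_k,x_j)=\Bigl[\sum_k t_k V(x_k),\sum_j t_j V(x_j)\Bigr]_\cE\geq 0
\]
by axiom (ve1). For (1)$\Rightarrow$(2) I would start from the pairing $[\cdot,\cdot]_\fk$ on $\CC_0^X$ of \eqref{e:gk}; positive semidefiniteness of $\fk$ (which also forces $\fk$ Hermitian by Lemma~\ref{l:twopos}\,(1), hence makes the pairing Hermitian) means exactly that $[f,f]_\fk\geq 0$ for all $f\in\CC_0^X$, so $[\cdot,\cdot]_\fk$ is a positive semidefinite sesquilinear map and Lemma~\ref{l:sesqui} applies. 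To land inside $Z^X$ I would use the convolution operator $K$ of \eqref{e:conv}: the identity \eqref{e:pconv} shows $Kg=0$ iff $[f,g]_\fk=0$ for all $f$, which by Lemma~\ref{l:sesqui} is equivalent to $[g,g]_\fk=0$, so $\ker K=\{g\mid[g,g]_\fk=0\}$. This lets me define a gramian on $\ran K\subseteq Z^X$ by $[Kf,Kg]:=[f,g]_\fk$, well defined and definite precisely because the null space of the pairing coincides with $\ker K$, while (ve1)--(ve3) transfer from the corresponding properties of the pairing. Setting $\cE=\ran K$ and $V(x)=K\delta_x=\fk(\cdot,x)$, where $\delta_x$ is the indicator of $x$, gives $[V(x),V(y)]=[\delta_x,\delta_y]_\fk=\fk(x,y)$ and $\lin V(X)=\ran K=\cE$, so $(\cE;V)$ is a minimal weak VE-space linearisation with $\cE\subseteq Z^X$.

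For part (b), (3)$\Rightarrow$(2) is trivial since a VH-space is in particular a VE-space, and together with (2)$\Rightarrow$(1) from part (a) it remains to prove (1)$\Rightarrow$(3). Since $Z$ is admissible, the VE-space $\cE$ of part (a) is a topological VE-space for the seminorms $\tl{p_j}$ of \eqref{e:qujeh}, and by Theorem~2 of \cite{Loynes1} it embeds densely into a VH-space $\cH$; carrying $V$ into $\cH$ yields a topologically minimal linearisation because $\lin V(X)=\cE$ is dense. The delicate point—and the step I expect to be the \emph{main obstacle}—is realising this abstract completion inside $Z^X$. Here I would exploit the reproducing identity $h(x)=[V(x),h]$, valid for $h\in\cE$ by \eqref{e:pconv} and the definition of the gramian, and extended to $\cH$ by continuity of $[V(x),\cdot]$. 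Given a Cauchy net $(h_\alpha)$ in $\cE$, the surrogate Schwarz inequality of Lemma~\ref{l:schwarz} gives, for every $p\in S(Z)$,
\[
p\bigl(h_\alpha(x)-h_\beta(x)\bigr)=p\bigl([V(x),h_\alpha-h_\beta]\bigr)\leq 4\,\tl{p}(V(x))\,\tl{p}(h_\alpha-h_\beta),
\]
so $(h_\alpha(x))_\alpha$ is Cauchy in $Z$, and completeness of $Z$ (axiom (a6)) produces a pointwise limit function in $Z^X$. Assigning to each element of $\cH$ its pointwise limit function defines a linear map $\cH\to Z^X$, whose injectivity follows from the reproducing identity together with density of $\lin V(X)$ and definiteness (ve1): if two elements share all values, their difference is orthogonal to every $V(x)$, hence to all of $\cH$, hence zero. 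Identifying $\cH$ with its image completes the realisation inside $Z^X$.
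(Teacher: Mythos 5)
Your proposal is correct and follows essentially the same route as the paper: the same convolution operator $K$, the same gramian on $\ran K\subseteq Z^X$ (with a slightly cleaner justification of well-definedness via the identification $\ker K=\{g\mid [g,g]_\fk=0\}$ in place of the paper's two-sided computation of $[e,f]_\cE$), the same $V(x)=K\delta_x$, and completion to a VH-space for part (b). The one point where you do more than the paper's in-place argument is the realisation of the completion inside $Z^X$, which the paper defers to Proposition~\ref{p:linrep}/Proposition~\ref{p:vhrep}; your direct Cauchy-net argument via the reproducing identity $h(x)=[V(x),h]$ and the surrogate Schwarz inequality of Lemma~\ref{l:schwarz} is a correct, self-contained substitute for that step.
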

  
\begin{proof} (1)$\Ra$(2). Assuming that $\fk$ is positive semidefinite, by 
Lemma~\ref{l:twopos}.(1) it follows that $\fk$ is Hermitian, that is, 
$\fk(x,y)^*=\fk(y,x)$ for all $x,y\in X$.
With notation as in Subsection~\ref{ss:hk}, we consider the convolution operator 
$K\colon \CC^X_0\ra Z^X$ and let $Z^X_K$ be its range, more precisely,
\begin{align}\label{e:fezero}  Z^X_K& =\{f\in Z^X\mid f=Kg\mbox{ for some }
g\in\CC^X_0\}\\
& = \{f\in\cF\mid f(x)=\sum_{y\in X} g(y)\fk(x,y)\mbox{ for some }g\in\CC^X_0
\mbox{ and all } y\in X\}.\nonumber
\end{align}
A pairing $[\cdot,\cdot]_{\cE}\colon Z_K^X\times Z_K^X\ra Z$ can be defined by
\begin{equation}\label{e:ipfezero} 
[e,f]_{\cE} =[g,h]_\fk=\sum_{x,y\in X} \overline{g(x)}h(y)\fk(x,y),
\end{equation} where $f=Kh$ and $e=Kg$ for some $g,h\in\CC^X_0$. 
We observe that
\begin{align*} [e,f]_{\cE} & =\sum_{x\in X}\overline{g(x)}f(x)
=\sum_{x,y\in X} \overline{g(x)}\fk(x,y)h(y) = 
\sum_{x,y\in X} h(y)\overline{g(x)}\fk(y,x)^*
=\sum_{x\in X} h(y)e(y)^*,
\end{align*} which shows that the definition in \eqref{e:ipfezero} is correct, that is, 
independent of $g$ and $h$ such that $e=Kg$ and $f=Kh$.

We claim that $[\cdot,\cdot]_{\cE}$ is a $Z$-valued inner product, that is, it 
satisfies all 
the requirements (ve1)--(ve3). The only fact that needs a proof is $[f,f]_{\cE}=0$ 
implies $f=0$. To see this, we use Lemma~\ref{l:sesqui} and first get that 
$[f,f']_{\cE}=0$ for all $f'\in Z^X_K$. For each $x\in X$, let 
$\delta_x \in\CC^X_0$ denote the $\delta$-function with support $\{x\}$,
\begin{equation}\label{e:deltax} 
\delta_x (y)=\begin{cases} 1, & \mbox{ if }y=x, \\ 0, & \mbox{ if }y\neq x. 
\end{cases}
\end{equation}
Letting $f'=K\delta_x $ we have
\begin{equation*} 0=[f,f']_{\cE} =\sum_{y\in X} 
\delta_x f(y)=f(x),
\end{equation*} hence, since $x\in X$ are arbitrary, it follows that 
$f=0$.

Thus, $(Z^X_K;[\cdot,\cdot]_{\cE})$ is a VE-space.
For each $x\in X$ we define $V(x)\in Z^X_K\subseteq\cE$ by
\begin{equation}\label{e:defvex} V(x) =K\delta_x.
\end{equation} Actually, there is an even more explicit way of expressing $V(x)$, 
namely,
\begin{equation}\label{e:vexa} (V(x))(y)=(K\delta_x)(y)
=\sum_{z\in X} \delta_x(z) \fk(y,z)=\fk(y,x),\ \ x\in X.
\end{equation}

On the other hand, for any $x,y\in X$, by \eqref{e:defvex} and \eqref{e:vexa}, we 
have
\begin{equation*} [V(x),V(y)]_\cE=(V(y))(x) =\fk(x,y),
\end{equation*} hence $(\cE;V)$ is a linearisation of $\fk$. We prove that 
it is minimal as well. To see this, note that for any $g\in\CC^X_0$, with notation as in 
\eqref{e:deltax}, we have
\begin{equation*} g=\sum_{x\in \supp(g)} g(x)\delta_x,
\end{equation*} hence, by \eqref{e:defvex}, the linear span of $V(X)$ equals 
$Z^X_K$.

(2)$\Ra$(1). This is proven exactly as in the classical case:
\begin{align*} \sum_{j,k=1}^n t_j\ol{t_k}\fk(x_k,x_j) & = \sum_{j,k=1}^n
t_j\ol{t_k} [V(x_k),V(x_j)]_\cE
 = [\sum_{j=1}^n t_kV(x_k),
\sum_{j=1}^n t_jV(x_j)]_\cE\geq 0,
\end{align*} for all $n\in\NN$, $x_1,\ldots,x_n\in X$, and $t_1,
\ldots,t_n\in \cH$. 

(3)$\Ra$(2). Clear.

(1)$\Ra$(3). Assuming that $Z$ is an admissible space, let 
$\fk$ be positive semidefinite, let $(\cE;V)$ be the 
weak VE-space linearisation of $\fk$. Then, $\cE$ is naturally equipped with a Hausdorff
locally convex topology, see Subsection~\ref{ss:vhs}, and then completed to a VH-
space $\cH$. Thus, $(\cH;V)$ is a weak VH-space linearisation of $\fk$ and it is easy to 
see that it is topologically minimal. The fact that this completion can be made within 
$Z^X$ will follow from Proposition~\ref{p:linrep}.
\end{proof}

\subsection{Reproducing Kernel Spaces}\label{ss:rks}
Let $Z$ be an ordered $*$-space and let $X$ be a nonempty 
set. As in Subsection~\ref{ss:hk}, we consider the complex vector space $Z^X$ 
of all functions $f\colon X\ra Z$. A VE-space $\cR$ over the ordered $*$-space 
$Z$ is called a \emph{weak $Z$-reproducing kernel VE-space on $X$} 
if there exists a Hermitian kernel $\fk\colon X\times X\ra Z$ such that the
following axioms are satisfied:
\begin{itemize} 
\item[(rk1)] $\cR$ is a subspace of $Z^X$, with all algebraic operations.
\item[(rk2)] For all $x\in X$, 
the $Z$-valued map $\fk_x =\fk(\cdot,x)\colon X\ra Z$ belongs to $\cR$.
\item[(rk3)] For all $f\in\cR$ we have $f(x)=[\fk_x,f]_\cR$, for all $x\in X$.
\end{itemize}
The axiom (rk3) is called the \emph{reproducing property} and note that, as a 
consequence, we have
\begin{equation}\label{e:repxy}\fk(x,y)=\fk_y(x)=[\fk_x,\fk_y]_\cR,\quad x,y\in X.
\end{equation}
A weak $Z$-reproducing kernel VE-space $\fk$ on $X$ is called \emph{minimal} if
\begin{itemize}
\item[(rk4)] $\lin\{\fk_x\mid x\in X\}=\cR$.
\end{itemize}

If $Z$ is an admissible space, a weak $Z$-reproducing kernel  
VE-space $\cR$ that is a VH-space 
is called a \emph{weak $Z$-reproducing kernel VH-space}. Such an $\cR$ is called 
\emph{topologically minimal} if
\begin{itemize}
\item[(rk4)$^\prime$] $\lin\{\fk_x\mid x\in X\}$ is dense in $\cR$.
\end{itemize}

\begin{remark}\label{r:minimal}
 Let $\cR$ be a weak $Z$-reproducing kernel VH-space with respect to some 
admissible space $Z$. In general, the closed subspace 
$\ol{\lin\{\fk_x\mid x\in X\}}\subseteq\cR$ may or may not be
orthocomplemented in $\cR$, see Subsection~\ref{ss:vhs}. This anomaly makes
some differences when compared with the classical theory of reproducing kernel spaces,
as is the case in closely related situations as 
in \cite{AyGheondea} and \cite{AyGheondea2} as well.\end{remark}

\begin{proposition}\label{p:mrk}
A weak $Z$-reproducing kernel VH-space $\cR$ with respect to some  admissible space $Z$ is 
topologically minimal if and only if the closed subspace 
$\ol{\lin\{\fk_x\mid x\in X\}}$ is orthocomplemented in $\cR$.
\end{proposition}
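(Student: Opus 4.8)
The plan is to set $\cM=\ol{\lin\{\fk_x\mid x\in X\}}$ and to recall that topological minimality, axiom (rk4)$^\prime$, is exactly the statement $\cM=\cR$. With this reformulation the forward implication is immediate: if $\cM=\cR$, then every $f\in\cR$ decomposes as $f=f+0$ with $f\in\cM$ and $0\in\cM^{\bot}$, so $\cM$ is orthocomplemented in $\cR$ (equivalently, $\cM$ is the range of the identity operator, which is a selfadjoint projection). No real content enters here.

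For the converse I would assume that $\cM$ is orthocomplemented in $\cR$ and prove that $\cM^{\bot}=\{0\}$, after which the orthocomplemented hypothesis forces $\cR=\cM\oplus\cM^{\bot}=\cM$, that is, topological minimality. To show $\cM^{\bot}=\{0\}$, take an arbitrary $z\in\cM^{\bot}$. Since $\fk_x\in\cM$ for every $x\in X$ by (rk2), orthogonality gives $[z,\fk_x]_\cR=0$, and then (ve2) yields $[\fk_x,z]_\cR=[z,\fk_x]_\cR^*=0$. Now the reproducing property (rk3), applied to $z\in\cR$, gives $z(x)=[\fk_x,z]_\cR=0$ for every $x\in X$; as $z$ is a function in $Z^X$, this means $z=0$.

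The step I expect to carry the real weight is the final inference, namely that $\cM^{\bot}=\{0\}$ \emph{together with} the orthocomplemented hypothesis yields $\cM=\cR$. One must resist the reflex from classical Hilbert space theory of concluding $\cM=\cR$ directly from $\cM^{\bot}=\{0\}$ and $\cM$ closed: in a VH-space this implication fails in general, which is precisely the anomaly flagged in Remark~\ref{r:minimal} (there is no Riesz representation available, so a proper closed subspace may have trivial orthogonal companion). It is exactly the orthocomplementedness of $\cM$ that supplies the decomposition $\cR=\cM\oplus\cM^{\bot}$ and thereby bridges the gap; without it the argument collapses, which is also why the statement is an honest equivalence rather than a tautology hidden in the definition of minimality.
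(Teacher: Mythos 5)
Your argument is correct and is essentially the paper's own proof: the paper likewise treats the minimal $\Rightarrow$ orthocomplemented direction as trivial, and for the converse decomposes an arbitrary $f=f_1+f_2$ with $f_2\in\cM^{\bot}$ and uses (rk3) to get $f_2(x)=[\fk_x,f_2]_\cR=0$, hence $f_2=0$ and $\cM=\cR$; your reorganisation (first showing $\cM^{\bot}=\{0\}$, then invoking the decomposition) is the same computation. Your remark that orthocomplementedness, not merely $\cM^{\bot}=\{0\}$, is what closes the argument in a VH-space is exactly the point of Remark~\ref{r:minimal} and is well taken.
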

\begin{proof}
If $\cM:=\ol{\lin\{\fk_x\mid x\in X\}}$ is orthocomplemented 
then, as a consequence of (rk3), $\cR$ is topologically minimal, in the sense of (rk4)$^\prime$.
Indeed, let $f\in\cR$ be arbitrary. Since $\cM$ is orthocomplemented, 
there exists $f_1\in\cM$ and $f_2\in\cM^{\bot}$ with $f=f_1+f_2$. By 
(rk3) we obtain that $0=[\fk_x,f_2]=f_2(x)$ for all $x\in\cR$, and 
that $f_2=0$. It follows that $f\in\cM$ and $\cM=\cR$, i.e. 
$\lin\{\fk_x\mid x\in X\}$ is dense in $\cR$. The converse implication is trivial.
\end{proof}

We first consider the relation between weak $Z$-reproducing kernel 
VE/VH-spaces and their reproducing kernels.

\begin{proposition}\label{p:vhrepunic} \emph{(a)} Let $\cR$ be a weak $Z$-reproducing 
kernel VE-space on $X$, with respect to some ordered $*$-space $Z$ and 
with kernel $\fk$. Then:
\begin{itemize}
\item[(i)] $\fk$ is positive semidefinite and uniquely determined by $\cR$.
\item[(ii)] $\cR_0=\lin\{\fk_x\mid x\in X\}\subseteq \cR$ is a minimal weak 
$Z$-reproducing kernel VE-space on $X$ and uniquely determined by $\fk$ with this property.
\item[(iii)] The gramian $[\cdot,\cdot]_\cR$ is uniquely determined by $\fk$
on $\cR_0$.
\end{itemize}
\emph{(b)} Assume that $Z$ is admissible and $\cR$ is a weak $Z$-reproducing kernel VH-space.
Then:
\begin{itemize}
\item[(i)] $\ol\cR_0$ is a topologically minimal $Z$-reproducing kernel VH-space in $\cR$.
\item[(ii)] The gramian $[\cdot,\cdot]_\cR$ is uniquely determined by $\fk$
on $\ol\cR_0\subseteq \cR$.
\item[(iii)] If $\cR$ is topologically minimal then it is unique with this property.
\end{itemize}
\end{proposition}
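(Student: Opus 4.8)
The plan is to prove the algebraic assertions in (a) using only the reproducing property \eqref{e:repxy} together with axioms (ve1)--(ve3), and then to bootstrap the topological assertions in (b) by continuity and density, the continuity being furnished by the surrogate Schwarz inequality of Lemma~\ref{l:schwarz}. For (a)(i), positive semidefiniteness is immediate from \eqref{e:repxy}: for $x_1,\dots,x_n\in X$ and $t_1,\dots,t_n\in\CC$,
\[
\sum_{j,k=1}^n\ol{t_k}t_j\fk(x_k,x_j)=\Bigl[\sum_k t_k\fk_{x_k},\sum_j t_j\fk_{x_j}\Bigr]_\cR\geq 0
\]
by (ve1). For the uniqueness of $\fk$ I would suppose a second Hermitian kernel $\fl$ also satisfies (rk1)--(rk3) for $\cR$; then $\fk_x-\fl_x\in\cR$ and $[\fk_x-\fl_x,f]_\cR=f(x)-f(x)=0$ for every $f\in\cR$ by (rk3), so taking $f=\fk_x-\fl_x$ and using (ve1) forces $\fk_x=\fl_x$ in $Z^X$, i.e.\ $\fk=\fl$.

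I would treat (a)(iii) before (a)(ii). Given $f=\sum_i s_i\fk_{x_i}$ and $g=\sum_j t_j\fk_{y_j}$ in $\cR_0$, sesquilinearity and \eqref{e:repxy} give
\[
[f,g]_\cR=\sum_{i,j}\ol{s_i}t_j[\fk_{x_i},\fk_{y_j}]_\cR=\sum_{i,j}\ol{s_i}t_j\fk(x_i,y_j),
\]
so the value depends only on $\fk$; this proves (iii). For (a)(ii) the axioms (rk1)--(rk3) pass to $\cR_0$ by restriction (note $\fk_x\in\cR_0$ by construction, and (rk3) holds because $\cR_0\subseteq\cR$), while (rk4) holds by definition, so $\cR_0$ is a minimal weak $Z$-reproducing kernel VE-space with kernel $\fk$. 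If $\cR_0'$ is another such space, then as a subset of $Z^X$ we have $\cR_0'=\lin\{\fk_x\mid x\in X\}=\cR_0$ by (rk4), while its gramian agrees with that of $\cR_0$ by (iii); hence $\cR_0'=\cR_0$.

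For part (b) the crucial preliminary is that each evaluation $\cR\ni f\mapsto f(x)=[\fk_x,f]_\cR\in Z$ is continuous: by Lemma~\ref{l:schwarz}, $p([\fk_x,f])\leq 4\tl p(\fk_x)\tl p(f)$ for every $p\in S(Z)$, and the same estimate shows the gramian is jointly continuous on $\cR\times\cR$. For (b)(i), $\ol{\cR_0}$ is a closed subspace of the complete space $\cR$, hence itself complete and a VH-space contained in $Z^X$; axioms (rk1)--(rk3) are inherited from $\cR$ (again $\fk_x\in\cR_0\subseteq\ol{\cR_0}$, and (rk3) holds throughout $\cR$), and (rk4)$^\prime$ holds by the definition of the closure, so $\ol{\cR_0}$ is a topologically minimal weak $Z$-reproducing kernel VH-space. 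For (b)(ii), I would extend (a)(iii) from $\cR_0$ to $\ol{\cR_0}$ by joint continuity of the gramian together with density of $\cR_0$, approximating $f,g\in\ol{\cR_0}$ by nets in $\cR_0$ so that $[f,g]_\cR$ is the unique continuous extension of the $\fk$-determined gramian.

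Finally, for (b)(iii), topological minimality means $\cR=\ol{\cR_0}$. Given a second topologically minimal space $\cR'$ with the same kernel, $\cR_0=\cR_0'$ carry identical gramians by (a)(iii), so by uniqueness of the completion (Theorem~2 in \cite{Loynes1}, cf.\ Remark~\ref{r:uniqueh}) there is a unitary $U\colon\cR\to\cR'$ fixing $\cR_0$ pointwise. The point is then to show $U$ is literally the inclusion of functions: for $f\in\cR$ and $x\in X$, the reproducing property in $\cR'$, the relation $U\fk_x=\fk_x$, unitarity of $U$, and the reproducing property in $\cR$ give
\[
(Uf)(x)=[\fk_x,Uf]_{\cR'}=[U\fk_x,Uf]_{\cR'}=[\fk_x,f]_\cR=f(x),
\]
so $Uf=f$ in $Z^X$ and hence $\cR=\cR'$ with equal gramians. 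I expect the main obstacle to be precisely this passage from the purely algebraic uniqueness on $\cR_0$ to the topological statements: one must verify that the evaluation functionals are continuous, so that $\ol{\cR_0}$ genuinely consists of functions and the reproducing identity survives under limits, and, in (b)(iii), that the abstract completion isomorphism coincides with the concrete inclusion of function spaces — it is exactly the reproducing property that pins this down.
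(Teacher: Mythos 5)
Your proof is correct and follows essentially the same route as the paper: positive semidefiniteness and uniqueness of $\fk$ from the reproducing property, restriction to $\cR_0$ for the minimal VE-space, and density plus continuity of the gramian (via Lemma~\ref{l:schwarz}) for the topological assertions. You in fact supply more detail than the paper at two points — the orthogonality argument $[\fk_x-\fl_x,\fk_x-\fl_x]_\cR=0$ for uniqueness of the kernel, and the verification in (b)(iii) that the abstract completion isomorphism coincides with the concrete inclusion of function spaces, a step the paper's proof leaves implicit.
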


\begin{proof} (a) Let $t_1,\ldots,t_n\in\CC$ and $x_1,\ldots,x_n\in X$ be 
arbitrary. Using \eqref{e:repxy} it follows
\begin{align*} \sum_{j,k=1}^n \ol{t_j}t_k\fk(x_j,x_k)  = 
\sum_{j,k=1}^n \ol{t_j}t_k[\fk_{x_j},\fk_{x_k}]_\cR 
 = [\sum_{j=1}^n t_j\fk_{x_j},\sum_{k=1}^n 
t_k\fk_{x_k}]_\cR\geq 0
\end{align*} hence $\fk$ is positive semidefinite. 
On the other hand, by (rk3) it follows that for all $x\in X$ 
the functions $\fk_x$ 
are  uniquely determined by $(\cR;[\cdot,\cdot]_\cR)$, hence 
$\fk(y,x)=\fk_x(y)$, $x,y\in X$, are uniquely determined. Hence assertion (i) is proven. Assertion (ii)
is clear by inspecting the definitions.
Assertion (iii) is now clear by (rk3), see \eqref{e:repxy}.

(b) The subspace $\ol\cR_0$ of $\cR$ is a topologically minimal $Z$-reproducing kernel VH-space,
by definition. Using the assertion at item (a).(ii) and the continuity of the gramian 
$[\cdot,\cdot]_\cR$, it follows that it is uniquely determined by $\fk$ on $\ol\cR_0$.

Assume that $\cR$ is topologically minimal and let $\cR'$ be another topologically minimal 
weak $Z$-reproducing kernel VH-space on $X$ with the same kernel $\fk$. 
By axiom (rk2) and the property (rk4), $\cR_0=\lin\{\fk_x\mid x\in X\}$ 
is a linear space 
that lies and is dense in both of $\cR$ and $\cR'$. By axiom (rk3), the $Z$-valued 
inner products $[\cdot,\cdot]_\cR$ and $[\cdot,\cdot]_{\cR'}$ coincide on $\cR_0$ 
and then, due to the special way in which the topologies on VH-spaces are defined, see 
\eqref{e:ujex} and \eqref{e:qujeh}, it follows that $\cR$ and $\cR^\prime$ induce the same 
topology on $\cR_0$ hence, taking into account the density of 
$\cR_0$ in both $\cR$ and $\cR^\prime$, we actually have $\cR=\cR'$ as VH-spaces.
\end{proof}

Consequently, given $\cR$ a
weak $Z$-reproducing kernel VE-space on $X$, without any ambiguity
we can talk about \emph{the} $Z$-reproducing kernel $\fk$ corresponding to $\cR$.

As a consequence of Proposition~\ref{p:vhrepunic}, weakly positive semidefiniteness is
an intrinsic property of the reproducing kernel of any weak reproducing kernel VE-space.
In the following we clarify in an explicit fashion 
the relation between weak VE/VH-linearisations and 
weak reproducing kernel VE/VH-spaces associated to positive semidefinite kernels.

\begin{proposition}\label{p:linrep} Let $\fk$ be a weakly
positive semidefinite kernel on $X$ and with values in the ordered $*$-space $Z$.

\emph{(a)} Any weak reproducing kernel VE-space $\cR$ associated to $\fk$
gives rise to a weak VE-space linearisation $(\cE;V)$ of $\fk$, where $\cE=\cR$ and 
\begin{equation}\label{e:vexak}V(x)=\fk_x,\quad x\in X. \end{equation}
If $\cR$ is minimal, then $(\cE;V)$ is minimal.

\emph{(b)} Any minimal weak VE-space linearisation $(\cE;V)$ of $\fk$ gives rise to the minimal
weak reproducing kernel VE-space $\cR$, where
\begin{equation}\label{e:redef} \cR=\{[V(\cdot),h]_\cE\mid h\in\cE\},
\end{equation} that is, $\cR$ consists of all functions $X\ni x\mapsto 
[V(x),e]_\cK\in Z$, for all $e\in\cE$, in 
particular, $\cR\subseteq Z^X$ and $\cR$ is endowed with the algebraic 
operations inherited from the complex vector space $Z^X$.
\end{proposition}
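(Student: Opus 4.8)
The plan is to treat the two parts separately, with part (a) being a direct verification and part (b) requiring the transport of a gramian along an explicit linear bijection.

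For part (a), I would set $\cE=\cR$ and $V(x)=\fk_x$, so that (vel1) holds trivially because $\cR$ is a VE-space over $Z$. Condition (vel2) is exactly the content of the reproducing property: by \eqref{e:repxy} we have $[V(x),V(y)]_\cE=[\fk_x,\fk_y]_\cR=\fk(x,y)$ for all $x,y\in X$, so $(\cE;V)$ is a weak VE-space linearisation of $\fk$. If $\cR$ is minimal, then by (rk4) we have $\lin V(X)=\lin\{\fk_x\mid x\in X\}=\cR=\cE$, which is precisely (vel3), so $(\cE;V)$ is minimal.

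For part (b), the key object is the map $\Phi\colon\cE\to Z^X$ defined by $\Phi(h)=[V(\cdot),h]_\cE$, so that $\cR=\Phi(\cE)$ by \eqref{e:redef}. First I would check that $\Phi$ is linear with respect to the pointwise operations of $Z^X$, which follows at once from linearity of $[\cdot,\cdot]_\cE$ in its second variable; in particular $\cR$ is a linear subspace of $Z^X$ with the inherited operations, giving (rk1). The crucial step is injectivity of $\Phi$: if $\Phi(h)=0$, then $[V(x),h]_\cE=0$ for every $x\in X$, and since the linearisation is minimal, i.e. $\lin V(X)=\cE$, this forces $[e,h]_\cE=0$ for all $e\in\cE$, in particular $[h,h]_\cE=0$, whence $h=0$ by (ve1). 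Thus $\Phi$ is a linear bijection of $\cE$ onto $\cR$, and I would use it to transport the gramian by setting $[\Phi(a),\Phi(b)]_\cR:=[a,b]_\cE$; the axioms (ve1)--(ve3) for $[\cdot,\cdot]_\cR$ then follow from the corresponding properties of $[\cdot,\cdot]_\cE$ together with the bijectivity and linearity of $\Phi$, making $\cR$ a VE-space isomorphic to $\cE$.

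It then remains to identify $\fk$ as the reproducing kernel. Using (vel2) I would observe that $\fk_x(y)=\fk(y,x)=[V(y),V(x)]_\cE=\Phi(V(x))(y)$, so $\fk_x=\Phi(V(x))\in\cR$, which gives (rk2). For the reproducing property (rk3), any $f\in\cR$ has the form $f=\Phi(h)$ for some $h\in\cE$, and then $[\fk_x,f]_\cR=[\Phi(V(x)),\Phi(h)]_\cR=[V(x),h]_\cE=f(x)$, exactly as required. Finally, minimality (rk4) follows from $\lin\{\fk_x\mid x\in X\}=\Phi(\lin V(X))=\Phi(\cE)=\cR$. I expect the only genuinely delicate point to be the injectivity of $\Phi$: it is here that the minimality hypothesis on the linearisation is indispensable, since it is what guarantees both that the transported gramian is well defined and that $\cR$ separates points correctly, so that $\fk$ can serve as its reproducing kernel.
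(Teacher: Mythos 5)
Your proof is correct and follows essentially the same route as the paper: part (a) is the same direct verification via the reproducing property, and in part (b) your map $\Phi$ is exactly the paper's bijection $U\colon h\mapsto[V(\cdot),h]_\cE$, with the same use of minimality for injectivity, the same transport of the gramian, and the same verification of (rk1)--(rk4). Your explicit step that $[V(x),h]_\cE=0$ for all $x$ plus $\lin V(X)=\cE$ gives $[h,h]_\cE=0$ and hence $h=0$ is a welcome filling-in of a detail the paper leaves implicit.
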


\begin{proof} (a)
Assume that $(\cR;[\cdot,\cdot]_\cR)$ is a weak $Z$-reproducing kernel 
VE-space on $X$, with reproducing kernel $\fk$. We let $\cE=\cR$ and define $V$ 
as in \eqref{e:vexak}.
Note that $V(x)\in\cE$ for all $x\in X$. 
Also, by \eqref{e:repxy} we have
\begin{equation*}[V(x),V(y)]_\cE=\fk(x,y),\quad x,y\in X.
\end{equation*} Thus, $(\cE;V)$ is a weak VE-space 
linearisation of $\fk$.

(b) Let $(\cE;V)$ be a minimal weak VE-space linearisation of $\fk$. 
Let $\cR$ be defined by \eqref{e:redef},
that is, $\cR$ consists of all functions $X\ni x\mapsto 
[V(x),h]_\cE\in Z$, in particular $\cR\subseteq Z^X$ with all algebraic 
operations inherited from the complex vector space $Z^X$. 

The correspondence
\begin{equation}\label{e:defuv} \cE\ni h\mapsto Uh=[V(\cdot),h]_\cE\in\cR
\end{equation} is clearly surjective.
In order to verify that it is injective as well, let $h,g\in\cE$ be such that 
$[V(\cdot),h]_\cE=[V(\cdot),g]_\cE$. Then, for all $x\in X$ we have
\begin{equation*}[V(x),h]_\cE=[V(x),g]_\cE,
\end{equation*} equivalently,
\begin{equation}\label{e:vexeh}[V(x),h-g]_\cE=0,\quad x\in X.
\end{equation} By the minimality of the linearisation $(\cE;V)$ it follows 
that $g=h$. Thus, $U$ is a bijection.

Clearly, the bijective map $U$ defined at \eqref{e:defuv} is linear, hence a linear 
isomorphism of complex vector spaces $\cE\ra \cR$. On $\cR$ we introduce a 
$Z$-valued pairing
\begin{equation}\label{e:ufege}
[Uf,Ug]_\cR=[f,g]_\cE,\quad f,g\in\cE.
\end{equation} Since $(\cE;[\cdot,\cdot]_\cE)$ is a VE-space over $Z$, it follows that 
$(\cR;[\cdot,\cdot]_\cR)$ is a VE-space over $Z$. Indeed, this follows from the 
observation that, by \eqref{e:ufege}, we 
transported the $Z$-gramian from $\cE$ to $\cR$ or, in other words, 
we have defined 
on $\cR$ the $Z$-gramian that makes the linear isomorphism $U$ a unitary operator 
between the VE-spaces $\cE$ and $\cR$.

We show that $(\cR;[\cdot,\cdot]_\cR)$ is a weak $Z$-reproducing kernel VE-space with 
corresponding reproducing kernel $\fk$.
By definition, $\cR\subseteq Z^X$. On the other hand, since 
\begin{equation*}\fk_x(y)=\fk(y,x)=[V(y),V(x)]_\cE,\mbox{ for all }x,y\in X,
\end{equation*} taking into account that $V(x)\in\cE$, by \eqref{e:redef} it 
follows that $\fk_x\in\cR$ for all $x\in X$. Further, for all $f\in\cR$ and all $x\in X$ 
we have
\begin{equation*}[\fk_x,f]_\cR =[\fk_x,[V(\cdot),g]_\cE ]_\cR=[V(x),g]_\cE,
\end{equation*} where $g\in\cE$ is the unique vector such that $[V(\cdot),g]_\cE=f$, 
which shows 
that $\cR$ satisfies the reproducing axiom as well. Finally, taking into account the 
minimality of the linearisation $(\cE;V)$ and the definition 
\eqref{e:redef}, 
it follows that $\lin\{ \fk_x\mid x\in X\}=\cR$. Thus, 
$(\cR;[\cdot,\cdot]_\cR)$ is a minimal weak $Z$-reproducing kernel VE-space with 
reproducing kernel $\fk$. 
\end{proof}

\begin{proposition}\label{p:vhrep}
Let $\fk$ be a weakly positive semidefinite kernel on $X$ and valued in the
admissible space $Z$.

\emph{(a)} Any weak reproducing kernel VH-space $\cR$ associated to $\fk$
gives rise to a weak VH-space linearisation $(\cH;V)$ of $\fk$, where $\cH=\cR$ and 
\begin{equation}\label{e:vexakvh}V(x)=\fk_x,\quad x\in X.
\end{equation} If $\cR$ is topologically minimal then $(\cH;V)$ is topologically minimal.

\emph{(b)} Any topologically minimal 
weak VH-space linearisation $(\cH;V)$ of $\fk$ gives rise to the topologically minimal
weak reproducing kernel VH-space $\cR$, where
\begin{equation}\label{e:redefvh} \cR=\{[V(\cdot),h]_\cH\mid h\in\cH\},
\end{equation} that is, $\cR$ consists of all functions $X\ni x\mapsto 
[V(x),e]_\cK\in Z$, for all $e\in\cH$, in 
particular, $\cR\subseteq Z^X$ and $\cR$ is endowed with the algebraic 
operations inherited from the complex vector space $Z^X$. 
\end{proposition}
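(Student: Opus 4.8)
The plan is to follow closely the scheme of Proposition~\ref{p:linrep}, upgrading its algebraic arguments to their topological counterparts by exploiting the joint continuity of the gramian furnished by Lemma~\ref{l:schwarz}.

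For part (a) I would simply set $\cH=\cR$ and $V(x)=\fk_x$. Axiom (rk2) guarantees $V(x)\in\cR=\cH$, while the reproducing property, in the form \eqref{e:repxy}, yields $[V(x),V(y)]_\cH=[\fk_x,\fk_y]_\cR=\fk(x,y)$, so that $(\cH;V)$ satisfies (vel1)--(vel2) with $\cH$ a VH-space, that is, it is a weak VH-space linearisation. Since $\lin V(X)=\lin\{\fk_x\mid x\in X\}$, topological minimality of $\cR$ in the sense of (rk4)$^\prime$ is literally the density required by (vhl3), hence $(\cH;V)$ is topologically minimal. This part is essentially identical to the VE-space case, with density replacing span equality.

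For part (b) the main work goes into showing that the map $U\colon\cH\ra\cR$, $Uh=[V(\cdot),h]_\cH$, is a VH-space isomorphism. Surjectivity is built into the definition \eqref{e:redefvh}. Injectivity is the point at which the topological setting genuinely differs from the VE-space case: from $[V(x),w]_\cH=0$ for all $x\in X$, with $w=h-g$, one gets by linearity $[\ell,w]_\cH=0$ for every $\ell\in\lin V(X)$; then, because the gramian is jointly continuous by Lemma~\ref{l:schwarz} and $\lin V(X)$ is dense in $\cH$ by topological minimality, one passes to the limit to obtain $[\eta,w]_\cH=0$ for all $\eta\in\cH$, in particular $[w,w]_\cH=0$, whence $w=0$ by (ve1). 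With $U$ a linear bijection, I transport the gramian by setting $[Uf,Ug]_\cR:=[f,g]_\cH$, which makes $U$ isometric; since the topology of a topological VE-space is intrinsically determined by its gramian, see \eqref{e:ujex}--\eqref{e:qujeh}, this isometry is a homeomorphism, and the completeness of $\cH$ transfers to $\cR$, so that $\cR$ is a VH-space.

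It then remains to verify the reproducing kernel axioms for $\cR$ with kernel $\fk$. Writing $\fk_x(y)=\fk(y,x)=[V(y),V(x)]_\cH$ shows $\fk_x=U(V(x))\in\cR$, which gives (rk2); for (rk3), with $f=Ug$ one computes $[\fk_x,f]_\cR=[U(V(x)),Ug]_\cR=[V(x),g]_\cH=f(x)$. Finally, since $U$ is a homeomorphism and $\fk_x=U(V(x))$, one has $\lin\{\fk_x\mid x\in X\}=U(\lin V(X))$, which is dense in $U(\cH)=\cR$ because $\lin V(X)$ is dense in $\cH$; thus $\cR$ is topologically minimal. The only genuinely new ingredient beyond the VE-space proof is this continuity-plus-density argument, which simultaneously secures the injectivity of $U$ and the completeness of $\cR$; I expect it to be the main obstacle, and the rest of the verification to be routine transport of structure.
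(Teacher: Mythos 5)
Your proposal is correct and takes essentially the same approach as the paper: the paper's own proof of Proposition~\ref{p:vhrep} is a sketch referring back to Proposition~\ref{p:linrep}, noting only that topological minimality replaces minimality in the injectivity step, and the density-plus-continuity-of-the-gramian argument via Lemma~\ref{l:schwarz} that you spell out is precisely what that remark relies on. The remaining steps (transport of the gramian, completeness of $\cR$ via the isometry $U$, verification of the reproducing axioms and of topological minimality) match the paper's intended argument.
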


\begin{proof} (a) The argument is similar to that used to prove assertion (a)
of Proposition~\ref{p:linrep}.

(b) Let $(\cH;V)$ be a topologically minimal weak VH-space linearisation of 
$\fk$ and let 
$\cR$ be defined as in \eqref{e:redefvh}. 
The correspondence
\begin{equation}\label{e:defuvh} \cH\ni h\mapsto Uh=[V(\cdot),h]_\cH\in\cR
\end{equation} is a linear bijection $U\colon \cH\ra\cR$. 
The argument to support this claim is similar with that used during the
proof of item (b) in Proposition~\ref{p:linrep}, with the difference that from \eqref{e:vexeh} we
the topological minimality of the linearisation $(\cH;V)$ in order to conclude
that $g=h$. Thus, $U$ is a bijection.

On $\cR$ we introduce a 
$Z$-valued pairing as in \eqref{e:ufege}
 Since $(\cH;[\cdot,\cdot]_\cH)$ is a VH-space over $Z$, it follows that 
$(\cR;[\cdot,\cdot]_\cR)$ is a VH-space over $Z$. This follows from the 
observation that, by \eqref{e:ufege}, we 
transported the $Z$-gramian from $\cH$ to $\cR$ or, in other words, 
we have defined 
on $\cR$ the $Z$-gramian that makes the linear isomorphism $U$ a unitary operator 
between the VH-spaces $\cH$ and $\cR$.

Finally, $(\cR;[\cdot,\cdot]_\cR)$ is the topologically minimal 
weak $Z$-reproducing kernel VH-space with 
corresponding reproducing kernel $\fk$. The argument is again similar with that used in
the proof of item (b) in Proposition~\ref{p:linrep}, with the difference that here we use
the topological minimality.
\end{proof}

The following theorem adds one more characterisation of positive semidefinite 
kernels, when compared to Theorem~\ref{t:velin}, in terms of reproducing kernel 
spaces. It's proof is a direct 
consequence of Proposition~\ref{p:linrep}, Proposition~\ref{p:vhrep}, and Theorem~\ref{t:velin}.

\begin{theorem}\label{t:vhrepker} \emph{(a)} Let $Z$ be an ordered $*$-space, 
$X$ a nonempty set, and $\fk\colon X\times X\ra Z$ a Hermitian kernel. 
The following assertions are equivalent:
\begin{itemize}
\item[(1)] $\fk$ is weakly positive semidefinite.
\item[(2)] $\fk$ is the $Z$-valued reproducing kernel of a VE-space $\cR$ in $Z^X$.
\end{itemize}

\emph{(b)} If, in addition, $Z$ is an admissible space then assertions (1) and (2) are 
equivalent with
\begin{itemize}
\item[(3)] $\fk$ is the $Z$-valued reproducing kernel of a VH-space $\cR$ in $Z^X$.
\end{itemize}
In particular, any weakly positive semidefinite $Z$-valued kernel $\fk$ 
has a topologically minimal weak 
$Z$-reproducing kernel VH-space $\cR$, uniquely determined by $\fk$.
\end{theorem}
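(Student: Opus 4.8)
The plan is to derive Theorem~\ref{t:vhrepker} as a bookkeeping consequence of the three preceding results: Theorem~\ref{t:velin}, Proposition~\ref{p:linrep}, and Proposition~\ref{p:vhrep}. The statement is a three-way (in part (b), four-way) equivalence plus a uniqueness claim, and the heavy analytic work -- constructing a linearisation from positive semidefiniteness, and converting between linearisations and reproducing kernel spaces -- has already been done. So my job is to chain these equivalences together correctly and then extract the final uniqueness assertion.

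For part (a), I would prove the cycle (1)$\Ra$(2)$\Ra$(1). For (1)$\Ra$(2): assuming $\fk$ is weakly positive semidefinite, Theorem~\ref{t:velin}.(a) gives a minimal weak VE-space linearisation $(\cE;V)$ of $\fk$. Feeding this into Proposition~\ref{p:linrep}.(b) produces a minimal weak $Z$-reproducing kernel VE-space $\cR\subseteq Z^X$ whose reproducing kernel is exactly $\fk$; this is assertion (2). For the converse (2)$\Ra$(1): if $\fk$ is the reproducing kernel of a VE-space $\cR$ in $Z^X$, then Proposition~\ref{p:vhrepunic}.(a).(i) states directly that $\fk$ is positive semidefinite, which is (1). (One should note in passing that the hypothesis that $\fk$ is Hermitian is automatic once we know it is weakly positive semidefinite, by Lemma~\ref{l:twopos}.(1), and is built into the reproducing-kernel axioms in direction (2).)

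For part (b), with $Z$ now admissible, I would add direction (3) to the cycle. The implication (3)$\Ra$(2) is immediate since a VH-space reproducing kernel structure is in particular a VE-space one. For (1)$\Ra$(3): starting from weak positive semidefiniteness, Theorem~\ref{t:velin}.(b) supplies a topologically minimal weak VH-space linearisation $(\cH;V)$ with $\cH\subseteq Z^X$; then Proposition~\ref{p:vhrep}.(b) turns $(\cH;V)$ into a topologically minimal weak $Z$-reproducing kernel VH-space $\cR\subseteq Z^X$ with reproducing kernel $\fk$, giving (3). For the final uniqueness clause, I would invoke Proposition~\ref{p:vhrepunic}.(b).(iii): once a topologically minimal weak $Z$-reproducing kernel VH-space with kernel $\fk$ exists, it is unique with that property. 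Existence follows from the just-proven (1)$\Ra$(3) construction, which is already topologically minimal.

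I do not anticipate a genuine obstacle here, since the proof is essentially assembly. The only point requiring a little care is \emph{tracking the minimality/topological-minimality qualifiers} through the chain: Theorem~\ref{t:velin} and Proposition~\ref{p:vhrep} each preserve (topological) minimality, so the $\cR$ produced in directions (1)$\Ra$(2) and (1)$\Ra$(3) is automatically (topologically) minimal, which is precisely what is needed to invoke the uniqueness statement of Proposition~\ref{p:vhrepunic}.(b).(iii) without an additional argument. I would therefore write the proof as three or four short sentences citing these results in sequence, matching the remark in the statement that it is a direct consequence of Proposition~\ref{p:linrep}, Proposition~\ref{p:vhrep}, and Theorem~\ref{t:velin}.
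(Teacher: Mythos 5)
Your proposal is correct and follows essentially the same route as the paper, which itself only records that the theorem is a direct consequence of Theorem~\ref{t:velin}, Proposition~\ref{p:linrep}, and Proposition~\ref{p:vhrep}; your explicit chaining of these, together with Proposition~\ref{p:vhrepunic} for the converse direction and the final uniqueness clause, is exactly the intended assembly. The attention you pay to preservation of (topological) minimality through the chain is the right point of care and matches what the cited results deliver.
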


As a consequence of the last assertion of Theorem~\ref{t:vhrepker}, 
given $\fk\colon X\times X\ra Z$ a positive semidefinite kernel for an 
admissible space $Z$, we can denote, without any ambiguity, 
by $\cR_\fk$ the unique topologically minimal  weak $Z$-reproducing kernel VH-space on $X$ associated to 
$\fk$.

\section{Invariant Weakly Positive Semidefinite Kernels}\label{s:iwpsk}
Let $X$ be a nonempty set, a (multiplicative) semigroup $\Gamma$, and 
an action of $\Gamma$ on $X$, denoted by $\xi\cdot x$, for all $\xi\in\Gamma$ 
and all $x\in X$. By definition, we have 
\begin{equation}\label{e:action}
\alpha\cdot(\beta\cdot x)=(\alpha\beta)\cdot x\mbox{ for all }\alpha,
\beta\in \Gamma\mbox{ and all }x\in X.\end{equation} 
This means that we have a semigroup morphism 
$\Gamma\ni\xi\mapsto \xi\cdot \in G(X)$, where $G(X)$ denotes the semigroup, with 
respect to composition, of all maps $X\ra X$. 
In case the semigroup $\Gamma$ has a unit 
$\epsilon$, the action is called \emph{unital} if $\epsilon\cdot x=x$ for all $x\in X$, 
equivalently, $\epsilon\cdot=\mathrm{Id}_X$.

We assume further that $\Gamma$ is a $*$-semigroup, that is, there is an 
\emph{involution} $*$ on $\Gamma$; this means that $(\xi\eta)^*=\eta^* \xi^*$ and 
$(\xi^*)^*=\xi$ for all $\xi,\eta\in\Gamma$. Note that, in case $\Gamma$ has a unit 
$\epsilon$ then $\epsilon^*=\epsilon$.

\subsection{Invariant Weak VE-Space Linearisations.}\label{ss:ivesl}
Given an ordered $*$-space $Z$ we are interested in those Hermitian kernels 
$\fk\colon X\times X\ra Z$ that are \emph{invariant} under the action of 
$\Gamma$ on $X$, that is,
\begin{equation}\label{e:invariant} 
\fk(y,\xi\cdot x)=\fk(\xi^*\cdot y,x)\mbox{ for all }x,y\in X
\mbox{ and all }\xi\in\Gamma.
\end{equation}
A triple $(\cE;\pi;V)$ is called a \emph{invariant weak VE-space linearisation} 
of the $Z$-valued kernel $\fk$ and the action of $\Gamma$ on $X$, if:
\begin{itemize}
\item[(ivel1)] $(\cE;V)$ is a weak VE-space linearisation of the kernel $\fk$.
\item[(ivel2)] $\pi\colon \Gamma\ra\cL^*(\cE)$ is a $*$-representation, that is, a 
multiplicative $*$-morphism.
\item[(ivel3)] $V$ and $\pi$ are related by the formula: 
$V(\xi\cdot x)= \pi(\xi)V(x)$, for all $x\in X$, $\xi\in\Gamma$.
\end{itemize}
Let $(\cE;\pi;V)$ be an invariant weak VE-space linearisation of the kernel 
$\fk$. Since $(\cE;V)$ is a weak linearisation
and taking into account the axiom (ivel3), for all $x,y\in X$ and all
$\xi\in\Gamma$, we have
\begin{align}\label{e:keyxi}
\fk(y,\xi\cdot x)  & =[V(y),V(\xi\cdot x)]_\cE=[V(y),\pi(\xi)V(x)]_\cE \\
&= [\pi(\xi^*)V(y),V(x)]_\cE= [V(\xi^*\cdot y),V(x)]_\cE=\fk(\xi^*\cdot y,x),\nonumber
\end{align} hence $\fk$ is invariant under the action of $\Gamma$ on $X$.

If, in addition to the axioms (ivel1)--(ivel3), the triple $(\cE;\pi;V)$ has also the 
property
\begin{itemize}
\item[(ivel4)] $\lin V(X)=\cE$,
\end{itemize} that is, the linearisation $(\cE;V)$ is minimal,
then $(\cE;\pi;V)$ is called \emph{minimal}.

\begin{remarks}\label{r:min} 
(a) The minimality condition (ivel4) does not depend on the representation $\pi$. 
Apparently, $\lin\pi(\Gamma)V(X)$ looks like a suitable 
candidate to replace $\lin V(X)$ in (ivel4). However,
in case the $*$-semigroup has a unit, then we have $\lin\pi(\Gamma)V(X)=\lin V(X)$ but, when
$\Gamma$ does not have a unit, only the inclusion $\lin\pi(\Gamma)V(X)\subseteq \lin V(X)$ holds 
and hence, $\lin\pi(\Gamma)V(X)$ may be too small to accommodate all $V(x)$ for $x\in X$.

(b) Let $(\cE;\pi;V)$ be an invariant weak VE-space linearisation of the kernel 
$\fk$. Then, for each $\gamma\in\Gamma$, we $\pi(\gamma)V(x)=V(\gamma\cdot x)$, for all
$x\in X$, hence $\pi(\gamma)$ leaves invariant $\lin V(X)$ and, consequently, letting 
$\cE_0=\lin V(X)$, $\pi_0\colon \Gamma\ra \cL^*(\cE_0)$ defined by 
$\pi_0(\gamma)f=\pi(\gamma) f$ 
for all $f\in \lin V(X)$, and $V_0\colon X\ra \cE_0$ defined by $V_0(x)=V(x)$, it follows that
$(\cE_0;\pi_0;V_0)$ is a minimal invariant weak VE-space linearisation of the kernel $\fk$.
\end{remarks}

As usually \cite{SzNagy}, minimal invariant VE-space linearisations preserve linearity.

\begin{proposition}\label{p:vhinvkolmolinear} Assume that, given 
an ordered $*$-space $Z$ valued kernel $\fk$, 
invariant under the action of the $*$-semigroup 
$\Gamma$ on $X$, for some fixed $\alpha,\beta,\gamma\in\Gamma$ we have 
$\fk(y,\alpha\cdot x)+\fk(y,\beta\cdot x)=\fk(y,\gamma\cdot x)$ for all $x,y\in X$. Then 
for any minimal weak invariant VE-space linearisation $(\cE;\pi;V)$ of $\fk$, the 
representation satisfies $\pi(\alpha)+\pi(\beta)=\pi(\gamma)$.
\end{proposition}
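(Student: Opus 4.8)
The plan is to exploit the minimality condition (ivel4), namely $\lin V(X)=\cE$, together with the linearisation property $\fk(y,\xi\cdot x)=[V(y),\pi(\xi)V(x)]_\cE$. The strategy is to show that the operator $\pi(\alpha)+\pi(\beta)-\pi(\gamma)$ vanishes on a spanning set of $\cE$ and then extend by linearity. First I would fix $x\in X$ and consider the element $w:=\pi(\alpha)V(x)+\pi(\beta)V(x)-\pi(\gamma)V(x)$, and compute, for an arbitrary $y\in X$, the gramian $[V(y),w]_\cE$. Using (ivel2), (ivel3) and the defining relation $V(\xi\cdot x)=\pi(\xi)V(x)$, this gramian rewrites as
\begin{equation*}
[V(y),\pi(\alpha)V(x)]_\cE+[V(y),\pi(\beta)V(x)]_\cE-[V(y),\pi(\gamma)V(x)]_\cE,
\end{equation*}
which by the linearisation property equals $\fk(y,\alpha\cdot x)+\fk(y,\beta\cdot x)-\fk(y,\gamma\cdot x)$. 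By the hypothesis on $\fk$ this is $0$ for every $y\in X$.

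Next I would upgrade ``$[V(y),w]_\cE=0$ for all $y\in X$'' to ``$w=0$''. Here I would invoke minimality: since $\lin V(X)=\cE$, the relation $[V(y),w]_\cE=0$ for all $y$ extends linearly to $[e,w]_\cE=0$ for all $e\in\cE$, in particular $[w,w]_\cE=0$, and then the definiteness axiom (ve1) of the $Z$-gramian forces $w=0$. Thus $\pi(\alpha)V(x)+\pi(\beta)V(x)=\pi(\gamma)V(x)$ for every $x\in X$, i.e.\ the operators $\pi(\alpha)+\pi(\beta)$ and $\pi(\gamma)$ agree on the set $V(X)$.

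Finally, since each of $\pi(\alpha),\pi(\beta),\pi(\gamma)$ is a linear operator on $\cE$, the operator $\pi(\alpha)+\pi(\beta)-\pi(\gamma)$ is linear, and it vanishes on $V(X)$; because $\lin V(X)=\cE$ it therefore vanishes on all of $\cE$, giving $\pi(\alpha)+\pi(\beta)=\pi(\gamma)$ as required. I do not expect any serious obstacle here: the argument is a routine ``test against the spanning vectors $V(y)$, then use definiteness'' computation, and the only point requiring a little care is the passage from $[V(y),w]_\cE=0$ for all $y$ to $w=0$, which is exactly where minimality (ivel4) and the strict positivity of the $Z$-gramian (axiom (ve1), or equivalently Lemma~\ref{l:sesqui}) are used — in particular this is why the hypothesis is stated for a \emph{minimal} linearisation.
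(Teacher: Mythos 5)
Your proof is correct and follows essentially the same route as the paper's: test $\pi(\alpha)V(x)+\pi(\beta)V(x)-\pi(\gamma)V(x)$ against the spanning vectors $V(y)$, use the hypothesis on $\fk$ together with the linearisation property, and conclude from minimality. The only difference is that you spell out the final step ($[w,w]_\cE=0$, hence $w=0$ by axiom (ve1)) which the paper leaves implicit; note only that the relevant fact here is (ve1) itself rather than Lemma~\ref{l:sesqui}, which goes in the opposite direction.
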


\begin{proof} For any $x,y\in X$ we have
\begin{align*} [(\pi(\alpha)+\pi(\beta))V(x),V(y)]_\cE & = [\pi(\alpha)V(x)+\pi(\beta) 
V(x),V(y)]_\cE \\
& = \fk(\alpha\cdot x,y)+\fk(\beta\cdot x,y) \\ & =\fk(\gamma\cdot x,y)
= [\pi(\gamma)V(x),V(y)]_\cE 
\end{align*} hence, since $V(X)$ linearly spans $\cE$, it follows that 
$\pi(\alpha)+\pi(\beta)=\pi(\gamma)$.
\end{proof}

Two invariant weak VE-space linearisations $(\cE;\pi;V)$ and $(\cE';\pi^\prime;V^\prime)$, of
the same Hermitian kernel $\fk$, are called \emph{unitarily equivalent} if there exists
a unitary operator $U\colon \cE\ra\cE'$ such that $U\pi(\xi)=\pi'(\xi)U$ for all $\xi\in\Gamma$,  
and $UV(x)=V'(x)$ for all $x\in X$. Let us note that, in case both of these invariant
weak VE-space linearisations are minimal, then this is equivalent with the
requirement that the weak VE-space linearisations $(\cE;V)$ and $(\cE';V')$ are unitary
equivalent. 

Here have the first main theorem of this article in which invariant weakly positive semidefinite 
kernels are characterised by invariant weak VE-space linearisations and by certain 
$*$-representations on weak $Z$-reproducing kernel VE-spaces. 

\begin{theorem}\label{t:veinvkolmo} Let $\Gamma$ be a $*$-semigroup that 
acts on the nonempty set $X$, and let $\fk\colon X\times X\ra Z$ be a 
$Z$-valued kernel for some ordered $*$-space $Z$. 
The following assertions are equivalent:
\begin{itemize}
\item[(1)] $\fk$ satisfies the following conditions:
\begin{itemize}
\item[(a)] $\fk$ is weakly positive semidefinite.
\item[(b)] $\fk$ is invariant under the action of $\Gamma$ on $X$, that is, 
\eqref{e:invariant} holds.
\end{itemize}
\item[(2)] $\fk$ has an invariant weak VE-space linearisation $(\cE;\pi;V)$.
\item[(3)] $\fk$ admits a weak $Z$-reproducing kernel VE-space $\cR$ and 
there exists a $*$-representation $\rho\colon \Gamma\ra\cL^*(\cR)$ such that 
$\rho(\xi)\fk_x=\fk_{\xi\cdot x}$ for all $\xi\in\Gamma$, $x\in X$.
\end{itemize}

Moreover, in case any of the assertions \emph{(1)}, \emph{(2)}, or \emph{(3)} holds, 
then a minimal invariant weak VE-space linearisation of $\fk$ can be constructed 
and a minimal weak $Z$-reproducing kernel $\cR$ as in \emph{(3)} can constructed as well.
\end{theorem}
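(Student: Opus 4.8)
The plan is to prove the cyclic chain of implications $(1)\Ra(2)\Ra(3)\Ra(1)$, combining the unparametrised linearisation machinery of Theorem~\ref{t:velin} with the invariance bookkeeping encoded in axioms (ivel2)--(ivel3). The heart of the matter is constructing the $*$-representation $\pi$ in the implication $(1)\Ra(2)$; the remaining implications are largely transport-of-structure arguments using Proposition~\ref{p:linrep}.

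For $(1)\Ra(2)$, I would first invoke Theorem~\ref{t:velin} to obtain a minimal weak VE-space linearisation $(\cE;V)$ of $\fk$, so that $\cE=\lin V(X)$ and $\fk(x,y)=[V(x),V(y)]_\cE$. The natural candidate for the representation is to set $\pi(\xi)$ on the generators by $\pi(\xi)V(x)=V(\xi\cdot x)$ and extend linearly; the required identity (ivel3) then holds by construction. The first task is to check that $\pi(\xi)$ is \emph{well defined} on $\lin V(X)$: if $\sum_j t_j V(x_j)=0$ then one must show $\sum_j t_j V(\xi\cdot x_j)=0$. I would verify this by computing, for arbitrary $y\in X$,
\begin{align*}
[V(y),\textstyle\sum_j t_j V(\xi\cdot x_j)]_\cE
&=\textstyle\sum_j t_j\,\fk(y,\xi\cdot x_j)
=\textstyle\sum_j t_j\,\fk(\xi^*\cdot y,x_j)\\
&=[V(\xi^*\cdot y),\textstyle\sum_j t_j V(x_j)]_\cE=0,
\end{align*}
where the second equality is precisely the invariance hypothesis \eqref{e:invariant}; since $V(X)$ spans $\cE$ and the gramian is nondegenerate (ve1), combined with Lemma~\ref{l:sesqui}, this forces $\sum_j t_j V(\xi\cdot x_j)=0$. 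The same style of computation shows simultaneously that each $\pi(\xi)$ is adjointable with $\pi(\xi)^*=\pi(\xi^*)$ (again reading off \eqref{e:invariant}), and that $\pi$ is multiplicative, using the action identity \eqref{e:action}: $\pi(\xi)\pi(\eta)V(x)=V(\xi\cdot(\eta\cdot x))=V((\xi\eta)\cdot x)=\pi(\xi\eta)V(x)$. Thus $\pi\colon\Gamma\ra\cL^*(\cE)$ is a $*$-representation and $(\cE;\pi;V)$ satisfies (ivel1)--(ivel4).

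The implication $(2)\Ra(3)$ proceeds by transport of structure. Starting from a minimal invariant linearisation $(\cE;\pi;V)$ (passing to the minimal reduction via Remark~\ref{r:min}(b) if necessary), Proposition~\ref{p:linrep}(b) furnishes the minimal weak reproducing kernel VE-space $\cR$ together with the unitary $U\colon\cE\ra\cR$, $Uh=[V(\cdot),h]_\cE$, under which $UV(x)=\fk_x$ by \eqref{e:repxy}. I would then simply define $\rho(\xi)=U\pi(\xi)U^{-1}$; this is automatically a $*$-representation on $\cL^*(\cR)$ since $U$ is unitary, and it satisfies the required intertwining $\rho(\xi)\fk_x=U\pi(\xi)V(x)=U V(\xi\cdot x)=\fk_{\xi\cdot x}$. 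For $(3)\Ra(1)$, condition (a) is immediate because Proposition~\ref{p:vhrepunic}(a)(i) already guarantees that the reproducing kernel of any weak reproducing kernel VE-space is weakly positive semidefinite, while invariance (b) follows by applying the reproducing property \eqref{e:repxy} and the intertwining relation: $\fk(y,\xi\cdot x)=[\fk_y,\fk_{\xi\cdot x}]_\cR=[\fk_y,\rho(\xi)\fk_x]_\cR=[\rho(\xi^*)\fk_y,\fk_x]_\cR=[\fk_{\xi^*\cdot y},\fk_x]_\cR=\fk(\xi^*\cdot y,x)$. The final "moreover" clause is handled by observing that the construction in $(1)\Ra(2)$ already produces a minimal linearisation and the one in $(2)\Ra(3)$ already produces a minimal reproducing kernel space.

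The main obstacle is the well-definedness of $\pi$ in the implication $(1)\Ra(2)$: because the gramian is $Z$-valued rather than scalar, I cannot directly argue via norms, and must instead exploit the nondegeneracy axiom (ve1) through Lemma~\ref{l:sesqui} to convert "$[V(y),w]_\cE=0$ for all $y$" into "$w=0$". Everything else — adjointability, multiplicativity, and the two transport arguments — is routine once this nondegeneracy mechanism is set up, since each property is checked by testing against $V(y)$ and reading off the invariance identity \eqref{e:invariant} or the action identity \eqref{e:action}.
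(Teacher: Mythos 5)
Your proposal is correct, but it reaches the key implication $(1)\Rightarrow(2)$ by a genuinely different route than the paper. The paper does not define $\pi(\xi)$ abstractly on generators and then check well-definedness; instead it works with the concrete minimal linearisation $\cE=Z^X_K$ (the range of the convolution operator) built in Theorem~\ref{t:velin}, and defines $\pi(\xi)$ globally as the composition operator $(\pi(\xi)f)(y)=f(\xi^*\cdot y)$ on $Z^X$, then verifies that invariance of $\fk$ makes $\pi(\xi)$ preserve $Z^X_K$, that it is multiplicative, a $*$-morphism for the gramian $[\cdot,\cdot]_\cE$, and that $\pi(\xi)V(x)=V(\xi\cdot x)$. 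This sidesteps any well-definedness issue, and the explicit formula is what the paper later reuses to extract the boundedness and continuity estimates in the topological versions (Theorems~\ref{t:vhinvkolmo} and \ref{t:vhinvkolmow}). Your GNS-style construction --- set $\pi(\xi)V(x)=V(\xi\cdot x)$, extend linearly, and prove well-definedness by testing $[V(y),\sum_j t_jV(\xi\cdot x_j)]_\cE$ against all $y$ and invoking nondegeneracy --- is sound and has the advantage of working on an arbitrary minimal linearisation rather than the specific model; one small remark is that Lemma~\ref{l:sesqui} is not really what you need there (it goes the other way), only axiom (ve1) applied to $[w,w]=0$ after observing $[v,w]=0$ for all $v\in\lin V(X)=\cE$. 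Your closing of the cycle via $(3)\Rightarrow(1)$ (using Proposition~\ref{p:vhrepunic} for positivity and the intertwining relation for invariance) also differs from the paper, which instead proves $(2)\Rightarrow(1)$ and $(3)\Rightarrow(2)$ by building a linearisation directly from the reproducing kernel space; both chains establish the equivalence.
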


\begin{proof} (1)$\Ra$(2). We consider the notation and the minimal weak VE-space linearisation 
$(\cE;V)$ constructed as in the proof of the implication (1)$\Ra$(2) 
of Theorem~\ref{t:velin}. For each $\xi\in\Gamma$ we let 
$\pi(\xi)\colon Z^X\ra Z^X$ be 
defined by
\begin{equation}\label{e:pixi} 
(\pi(\xi)f)(y)=f(\xi^*\cdot y),\quad f\in Z^X,\ y\in X,\ \xi\in\Gamma.
\end{equation}
We claim that $\pi(\xi)$ leaves $Z^X_K$ invariant, where $K$ is the convolution 
operator defined at \eqref{e:conv} and $Z^X_K\subseteq Z^X$ denotes its range. 
To see this, let $f\in Z^X_K$, that is, $f=Kg$ for 
some $g\in \CC^X_0$ or, even more explicitly, by \eqref{e:fezero},
\begin{equation}f(y)=\sum_{x\in X} g(x)\fk(x,y),\quad y\in X.
\end{equation}
Then, 
\begin{equation}\label{e:fexistar} 
f(\xi^*\cdot y)  =\sum_{x\in X}g(x)\fk(x,\xi^*\cdot y)
 =\sum_{x\in X}g(x)\fk(\xi
\cdot x,y)=\sum_{z\in X} g_\xi(z)\fk(z,y),
\end{equation} where,
\begin{equation*} g_\xi(z)=\begin{cases} 0,& \mbox{ if }\xi\cdot x=z
\mbox{ has no solution }x\in X,\\ \sum\limits_{\xi\cdot x=z} g(x), & \mbox{ otherwise.}
\end{cases}
\end{equation*} Since clearly $g_\xi \in \CC^X_0$, that is, $g_\xi$ has finite support, 
it follows 
that $\pi(\xi)$ leaves $Z^X_K$ invariant. In the following we denote by the same 
symbol $\pi(\xi)$ the map $\pi(\xi)\colon Z^X_K\ra Z^X_K$.

In the following we prove that $\pi$ is a representation of the semigroup $\Gamma$ 
on the complex vector space $Z^X_K$, that is, 
\begin{equation}\label{e:piab}
\pi(\alpha\beta)f=\pi(\alpha)\pi(\beta)f,\quad \alpha,\beta\in \Gamma,\ f\in Z^X_K.
\end{equation}
To see this, let $f\in Z^X_K$ be fixed and denote $h=\pi(\beta)f$, that is, 
$h(y)=f(\beta^*\cdot y)$ for all $y\in X$. Then $\pi(\alpha)\pi(\beta)f=\pi(\alpha)h$, 
that is, $(\pi(\alpha)h)(y)=h(\alpha^*\cdot y)=f(\beta^*\alpha^*\cdot y)
=f((\alpha\beta)^*\cdot y)=(\pi(\alpha\beta))(y)$, for all $y\in X$, which proves 
\eqref{e:piab}.

Next we show that $\pi$ is actually a $*$-representation, that is,
\begin{equation}\label{e:piastar} [\pi(\xi)f,f']_{\cE}=[f,\pi(\xi^*)f']_{\cE},\quad 
f,f'\in Z^X_K.
\end{equation}
To see this, let $f=Kg$ and $f'=Kg'$ for some $g,g'\in\CC^X_0$. Then, by
\eqref{e:ipfezero} and \eqref{e:fexistar}, 
\begin{align*} [\pi(\xi)f,f']_{\cE} & = \sum_{y\in X}g'(y)f(\xi^*\cdot y) 
= \sum_{x,y\in X}g'(y) \ol{g(x)}\fk(\xi^*\cdot y,x) \\
& = \sum_{x,y\in X}g'(y)\ol{g(x)}\fk(y,\xi\cdot x) 
 = \sum_{x\in X}\ol{g(x)}f'(\xi\cdot x)^*
 = [f,\pi(\xi^*)f']_\cE,
\end{align*} and hence the formula \eqref{e:piastar} is proven.

In order to show that the axiom (vel3) holds as well, we use \eqref{e:vexa} 
and \eqref{e:pixi}. 
Thus, for all $\xi\in\Gamma$, $x,y\in X$ and taking into account that $\fk$ is 
invariant under the action of $\Gamma$ on $X$, we have
\begin{align}\label{e:vexic} 
(V(\xi\cdot x))(y) & =\fk(\xi\cdot x,y)=\fk(x,\xi^*\cdot y) \\
 & =(V(x))(\xi^*\cdot y)=(\pi(\xi) V(x))(y),\nonumber
\end{align} which proves (vel3). Thus, $(\cE;\pi;V)$, here constructed, 
is an invariant weak VE-space linearisation of the Hermitian kernel $\fk$.
Note that $(\cE;\pi;V)$ is minimal, that is, the axiom (vel4) holds, since the 
linearisation $(\cE;V)$ is minimal, by the proof of Theorem~\ref{t:velin}.

In order to prove the uniqueness of the minimal weak invariant linearisation, 
let $(\cK^\prime;\pi^\prime;V^\prime)$ be another minimal invariant weak VE-space 
linearisation of $\fk$. We 
consider the unitary operator $U\colon \cK\ra\cK'$ defined as in \eqref{e:udef} and 
we already know that $UV(x)=V'(x)$ for all $x\in X$. Since, for any $\xi\in\Gamma$, 
$x\in X$ we have
\begin{equation*}U\pi(\xi)V(x)=UV(\xi\cdot x)=V'(\xi\cdot x)=\pi'(\xi)V'(x)
=\pi'(\xi)UV(x),
\end{equation*} and taking into account the minimality, it follows that 
$U\pi(\xi)=\pi'(\xi)U$ for all $\xi\in\Gamma$.

(2)$\Ra$(1). Let $(\cE;\pi;V)$ be an invariant weak VE-space linearisation of the kernel 
$\fk$. We already know from the proof of Theorem~\ref{t:velin} that $\fk$ 
is positive semidefinite and it was shown in \eqref{e:keyxi} that $\fk$ is invariant 
under the action of 
$\Gamma$ on $X$.

(2)$\Ra$(3). Let $(\cE;\pi;V)$ be an invariant weak VE-space linearisation of the kernel 
$\fk$ and the action of $\Gamma$ on $X$. Without loss of generality, we can 
assume that it is minimal. Indeed, since we have already proven the implication (2)$\Ra$(1),
we observe that during the proof of the implication (1)$\Ra$(2), we obtained a minimal invariant
weak VE-space linearisation of $\fk$.

We use the notation and the facts established during the 
proof of the implication (2)$\Ra$(3) in Theorem~\ref{t:vhrepker}. Then, 
for all $x,y\in X$ we have
\begin{equation*} \fk_{\xi\cdot x}(y)=\fk(y,\xi\cdot x)=[V(y),V(\xi\cdot x)]_\cK
=[V(y),\pi(\xi) V(x)]_\cK,
\end{equation*} hence, letting $\rho(\xi)=U\pi(\xi)U^{-1}$, where 
$U\colon\cK\ra\cR$ is 
the unitary operator defined as in \eqref{e:defuv}, we obtain a $*$-representation of 
$\Gamma$ on the VE-space $\cR$ such that $\fk_{\xi\cdot x}=\rho(\xi)\fk_x$ for all 
$\xi\in\Gamma$ and $x\in X$.

(3)$\Ra$(2). Let $(\cR;\rho)$, where $\cR$ is a weak $Z$-reproducing kernel 
VE-space of $\fk$ and $\rho\colon\Gamma\ra\cL^*(\cR)$ is a $*$-representation 
such that 
$\fk_{\xi\cdot x}=\rho(\xi)\fk_x$ for all $\xi\in\Gamma$ and $x\in X$. As in the proof of 
the 
implication (3)$\Ra$(2) in Theorem~\ref{t:vhrepker}, we show that $(\cR;V)$, where 
$V$ is defined as in \eqref{e:vexak}, is a minimal linearisation of $\fk$. 
Letting $\pi=\rho$, it is then easy to see that $(\cR;\pi;V)$ is an invariant weak VE-space 
linearisation of the kernel $\fk$ and the action of $\Gamma$ on $X$.
\end{proof}

\subsection{Boundedly Adjointable Invariant Weak VH-Space Linearisations.}\label{ss:biwvhsl}
Let us assume now that $Z$ is an admissible space and 
$\fk\colon X\times X\rightarrow Z$ is a kernel.
A triple $(\cK;\pi;V)$ is called a \emph{boundedly adjointable  invariant weak VH-space linearisation} 
of the $Z$-valued kernel $\fk$ and the action of $\Gamma$ on $X$, if:
\begin{itemize}
\item[(ivhl1)] $(\cK;V)$ is a weak VH-space linearisation of the kernel $\fk$.
\item[(ivhl2)] $\pi\colon \Gamma\ra\cB^*(\cK)$ is a $*$-representation, that 
is, a multiplicative $*$-morphism.
\item[(ivhl3)] $V$ and $\pi$ are related by the formula: 
$V(\xi\cdot x)= \pi(\xi)V(x)$, for all $x\in X$, $\xi\in\Gamma$.
\end{itemize}
Let $(\cK;\pi;V)$ be a boundedly adjointable  invariant weak VH-space linearisation of the kernel 
$\fk$. As in \eqref{e:keyxi}, it follows that $\fk$ is invariant under the action of 
$\Gamma$ on $X$.

If, in addition to the axioms (ivhl1), (ivhl2), and
(ivhl3), the triple $(\cK;\pi;V)$ has also the 
property
\begin{itemize}
\item[(ivhl4)] $\lin V(X)$ is dense in $\cK$,
\end{itemize} that is, the weak VH-space linearisation $(\cH;V)$ is topologically minimal,
then $(\cK;\pi;V)$ is called \emph{topologically minimal}. 
A similar observation as in Remark~\ref{r:min} can be made: in case $\Gamma$ has a unit then
(ivhl4) is equivalent with saying $\lin\pi(\Gamma)V(X)$ is dense in $\cK$ but, in general the 
apparently more candidate $\lin\pi(\Gamma)V(X)$ is too small to provide a suitable topological 
minimality condition.

Two boundedly adjointable  invariant weak 
VH-space linearisations $(\cK;\pi;V)$ and $(\cK^\prime;\pi^\prime;V^\prime)$ 
of the same kernel $\fk$ are \emph{unitarily invariant} if there exists a unitary 
$U\in\cB^*(\cK,\cK^\prime)$ such that $U\pi(\xi)=\pi'(\xi)U$ for all
$\xi\in\Gamma$ and $UV(x)=V'(x)$ for all $x\in X$. Let us note that, in case both of these 
boundedly adjointable  invariant weak VH-space linearisations are topologically minimal then they are 
unitarily equivalent.

The analog of Proposition~\ref{p:vhinvkolmolinear} for topologically
minimal invariant weak VH-space linearisations holds as well.

\begin{proposition}\label{p:vhinvkolmolinearmt} Assume that, given 
an admissible space $Z$  and a $Z$-valued kernel $\fk$, 
invariant under the action of the $*$-semigroup 
$\Gamma$ on $X$, for some fixed $\alpha,\beta,\gamma\in\Gamma$ we have 
$\fk(y,\alpha\cdot x)+\fk(y,\beta\cdot x)=\fk(y,\gamma\cdot x)$ for all $x,y\in X$. Then, 
for any topologically minimal boundedly adjointable  
invariant weak VH-space linearisation $(\cK;\pi;V)$ of $\fk$, the 
representation satisfies $\pi(\alpha)+\pi(\beta)=\pi(\gamma)$.
\end{proposition}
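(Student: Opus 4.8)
The plan is to imitate the proof of Proposition~\ref{p:vhinvkolmolinear}, the only genuinely new ingredient being a continuity-and-density argument that replaces the equality $\lin V(X)=\cE$ (available in the nontopological case) by the mere density of $\lin V(X)$ in $\cK$. Set $T=\pi(\alpha)+\pi(\beta)-\pi(\gamma)$. Since $\pi$ takes values in the $*$-algebra $\cB^*(\cK)$, we have $T\in\cB^*(\cK)$; in particular $T$ is bounded, hence continuous. The goal is to show $T=0$.

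First I would establish, exactly as in the nontopological case, the identity
\begin{equation*}
[TV(x),V(y)]_\cK=\fk(\alpha\cdot x,y)+\fk(\beta\cdot x,y)-\fk(\gamma\cdot x,y)=0,\quad x,y\in X.
\end{equation*}
Here the first equality uses axiom (ivhl3) to write $\pi(\alpha)V(x)=V(\alpha\cdot x)$, and similarly for $\beta,\gamma$, followed by axiom (ivhl1) (that $(\cK;V)$ linearises $\fk$); the vanishing then follows from the standing hypothesis $\fk(y,\alpha\cdot x)+\fk(y,\beta\cdot x)=\fk(y,\gamma\cdot x)$ combined with the Hermiticity of $\fk$ (which turns it into the companion identity in the first slot). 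By linearity of the gramian in its second variable, this gives $[TV(x),w]_\cK=0$ for every $w\in\lin V(X)$ and every $x\in X$.

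The step that really uses the topology, and which is the heart of the proof, is the passage from $\lin V(X)$ to all of $\cK$. I would fix $x\in X$ and note that, by the surrogate Schwarz inequality of Lemma~\ref{l:schwarz}, the map $\cK\ni w\mapsto[TV(x),w]_\cK\in Z$ is continuous; since it vanishes on the dense subspace $\lin V(X)$ and $Z$ is Hausdorff (axiom (a3)), it vanishes identically on $\cK$. Choosing $w=TV(x)$ yields $[TV(x),TV(x)]_\cK=0$, whence $TV(x)=0$ by the definiteness axiom (ve1) of the gramian. Thus $T$ annihilates $V(X)$, and therefore its linear span $\lin V(X)$; being continuous with $\lin V(X)$ dense in $\cK$, $T$ annihilates all of $\cK$, so $T=0$, i.e.\ $\pi(\alpha)+\pi(\beta)=\pi(\gamma)$.

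I do not anticipate a serious obstacle. The only delicate points are that the gramian must be continuous in each variable—precisely what Lemma~\ref{l:schwarz} supplies—and that the separation needed to deduce $TV(x)=0$ from the vanishing of its self-gramian is guaranteed by the Hausdorff property of $Z$ together with (ve1). Everything else is a verbatim transcription of the VE-space computation.
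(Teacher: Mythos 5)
Your proof is correct and takes essentially the same route as the paper, whose own proof of this proposition is just the remark that the argument of Proposition~\ref{p:vhinvkolmolinear} carries over ``using the topological minimality''; you have correctly supplied the details that this remark suppresses, namely the continuity of the gramian via Lemma~\ref{l:schwarz} and of $T=\pi(\alpha)+\pi(\beta)-\pi(\gamma)\in\cB^*(\cK)$, together with the density of $\lin V(X)$.
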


\begin{proof} The same argument as in the proof of Proposition~\ref{p:vhinvkolmolinear} applies
with the small difference that we use the topological minimality and get the 
same conclusion.
\end{proof}

Here we have the second main theorem of this article in which 
invariant weakly positive semidefinite kernels
are characterised by boundedly adjointable invariant weak VE-space linearisations and by 
certain $*$-representations with boundedly adjointable operators on 
weak $Z$-reproducing kernel VE-spaces. This is the
first topological analogue of Theorem~\ref{t:veinvkolmo}.

\begin{theorem}\label{t:vhinvkolmo} Let $\Gamma$ be a $*$-semigroup that 
acts on the nonempty set $X$, and let $\fk\colon X\times X\ra Z$ be a $Z$-valued 
kernel for some admissible space $Z$. 
The following assertions are equivalent:
\begin{itemize}
\item[(1)] $\fk$ satisfies the following conditions:
\begin{itemize}
\item[(a)] $\fk$ is weakly positive semidefinite.
\item[(b)] $\fk$ is invariant under the action of $\Gamma$ on $X$, that is, 
\eqref{e:invariant} holds.
\item[(c)] For any $\alpha\in \Gamma$ there exists $c(\alpha)\geq 0$ such that
\begin{equation}\label{e:cbound} \sum_{j,k=1}^n t_j\ol{t_k}\fk(\alpha\cdot x_k,\alpha\cdot 
x_j) \leq c(\alpha)^2 \sum_{j,k=1}^n t_j\ol{t_k}\fk(x_k,x_j),
\end{equation} for $n\in\NN$, 
all $x_1,\ldots,x_n\in X$, and all $t_1,\ldots,t_n\in \CC$.
\end{itemize}
\item[(2)] $\fk$ has a boundedly adjointable  invariant weak VH-space linearisation $(\cK;\pi;V)$.
\item[(3)] $\fk$ admits a weak $Z$-reproducing kernel VH-space $\cR$ and 
there exists a $*$-representation $\rho\colon \Gamma\ra\cB^*(\cR)$ such that 
$\rho(\xi)\fk_x=\fk_{\xi\cdot x}$ for all $\xi\in\Gamma$, $x\in X$.
\end{itemize}

Moreover, in case any of the assertions \emph{(1)}, \emph{(2)}, or \emph{(3)} holds, 
then a topologically minimal boundedly adjointable  invariant weak VH-space linearisation can be constructed
and a topologically minimal weak $Z$-reproducing kernel VH-space $\cR$ as in assertion \emph{(3)} can be constructed as well.
\end{theorem}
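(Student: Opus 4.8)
The plan is to establish the equivalences by leveraging the nontopological Theorem~\ref{t:veinvkolmo} as the backbone and then supplying the extra topological content coming from condition (c). Since the implications (2)$\Ra$(3) and (3)$\Ra$(2) are purely structural and transfer almost verbatim from the VE-space proof via the unitary identification $U$ of Proposition~\ref{p:vhrep}(b), the substantive work concentrates in the cycle relating condition (1) to the existence of a boundedly adjointable linearisation. First I would prove (2)$\Ra$(1): given $(\cK;\pi;V)$, positive semidefiniteness and invariance follow exactly as in \eqref{e:keyxi}, and condition (c) is extracted by computing, for $\alpha\in\Gamma$,
\begin{equation*}
\sum_{j,k=1}^n t_j\ol{t_k}\fk(\alpha\cdot x_k,\alpha\cdot x_j)
=[\pi(\alpha)\textstyle\sum_j t_jV(x_j),\pi(\alpha)\sum_k t_kV(x_k)]_\cK
\leq \|\pi(\alpha)\|^2\bigl[\sum_j t_jV(x_j),\sum_j t_jV(x_j)\bigr]_\cK,
\end{equation*}
so that $c(\alpha)=\|\pi(\alpha)\|$ works by the definition \eqref{e:bounded} of boundedness of $\pi(\alpha)\in\cB^*(\cK)$.

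The heart of the argument is (1)$\Ra$(2). The plan is to start from the minimal invariant VE-space linearisation $(\cE;\pi;V)$ produced by Theorem~\ref{t:veinvkolmo} applied to (1a) and (1b), where $\cE=Z^X_K$ and $\pi(\xi)$ is the convolution-type representation from \eqref{e:pixi}. Equipping $\cE$ with the locally convex topology of Subsection~\ref{ss:vhs} and completing to a VH-space $\cK$ (as in the proof of Theorem~\ref{t:velin}, (1)$\Ra$(3)) produces a topologically minimal weak VH-space linearisation. The key step is showing that each $\pi(\xi)$ extends to a bounded, adjointable operator on $\cK$. Here condition (c) enters decisively: for a generic element $f=\sum_j t_jV(x_j)\in\lin V(X)$, axiom (ivel3) gives $\pi(\alpha)f=\sum_j t_jV(\alpha\cdot x_j)$, and therefore
\begin{equation*}
[\pi(\alpha)f,\pi(\alpha)f]_\cK
=\sum_{j,k=1}^n t_j\ol{t_k}\fk(\alpha\cdot x_k,\alpha\cdot x_j)
\leq c(\alpha)^2\sum_{j,k=1}^n t_j\ol{t_k}\fk(x_k,x_j)
=c(\alpha)^2[f,f]_\cK,
\end{equation*}
which is precisely the boundedness estimate \eqref{e:bounded} with $\|\pi(\alpha)\|\leq c(\alpha)$. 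Since $\lin V(X)$ is dense in $\cK$ and $\pi(\alpha)$ is bounded hence continuous, it extends uniquely to a bounded operator on all of $\cK$; the multiplicative and $*$-properties pass to the closure by continuity of the gramian, so $\pi(\xi)\in\cB^*(\cK)$ and (ivhl1)--(ivhl4) hold.

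I expect the main obstacle to be verifying adjointability on the completed space $\cK$ rather than merely boundedness. On the dense subspace $\cE$ the identity $[\pi(\xi)f,f']_\cE=[f,\pi(\xi^*)f']_\cE$ is already available from \eqref{e:piastar}, and both $\pi(\xi)$ and $\pi(\xi^*)$ are bounded by the estimate above with constants $c(\xi)$ and $c(\xi^*)$; thus their continuous extensions continue to satisfy the adjoint relation \eqref{e:adj} on $\cK$ by a density-and-continuity argument, giving $\pi(\xi)^*=\pi(\xi^*)\in\cB^*(\cK)$. The remaining claim that the construction may be realised with $\cK\subseteq Z^X$, and the uniqueness up to unitary equivalence of the topologically minimal linearisation, follow by transporting everything through Proposition~\ref{p:vhrep} and Remark~\ref{r:uniqueh}, exactly as the analogous assertions were handled in the VE-space case; setting $\rho(\xi)=U\pi(\xi)U^{-1}$ with the unitary $U$ of \eqref{e:defuvh} then delivers the representation in (3) and closes the equivalences.
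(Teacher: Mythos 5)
Your proposal is correct and follows essentially the same route as the paper: extract condition (c) from the boundedness of $\pi(\alpha)$ via $c(\alpha)=\|\pi(\alpha)\|$, and conversely use (c) on the dense subspace $\lin V(X)$ of the completion of the VE-space linearisation from Theorem~\ref{t:veinvkolmo} to get $\|\pi(\alpha)\|\leq c(\alpha)$, with adjointability on $\cK$ obtained by extending both $\pi(\xi)$ and $\pi(\xi^*)$ and passing \eqref{e:piastar} to the closure by density and continuity of the gramian. The handling of (2)$\Leftrightarrow$(3) through the unitary $U$ and the minimality/uniqueness remarks likewise matches the paper's argument.
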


\begin{proof} (1)$\Ra$(2). We consider the notation and the minimal invariant weak 
VE-space linearisation $(\cE;\pi;V)$ constructed as in the proof of the implication 
(1)$\Ra$(2) of Theorem~\ref{t:veinvkolmo}. 
Considering $Z^X_K$ as a VE-space with $Z$-gramian 
$[\cdot,\cdot]_\cE$, we consider its natural topology as in Subsection~\ref{ss:vhs} 
and we prove now that $\pi(\xi)$ is bounded for all $\xi
\in \Gamma$. Indeed, let $f=Kg$ for some $g\in \CC^X_0$. 
Using the definition of $\pi(\xi)$ 
and the boundedness condition (c), we have
\begin{align*} [\pi(\xi)f,\pi(\xi)f]_{\cK} & = [\pi(\xi^*)\pi(\xi)f,f]_{\cK} = [\pi(\xi^*
\xi)f,f]_{\cK} \\
& = \sum_{x,y\in X}g(y)\ol{g(x)}\fk(\xi^*\xi\cdot y,x)
 = \sum_{x,y\in X}g(y)\ol{g(x)}\fk(\xi\cdot y,\xi\cdot x) \\
& \leq c(\xi)^2 \sum_{x,y\in X}g(y)\ol{g(x)}\fk(y,x)  = c(\xi)^2 [f,f]_{\cK},
\end{align*} and hence the boundedness of $\pi(\xi)$ is proven. 
This implies that $\pi(\xi)$ can be uniquely 
extended by continuity to an operator $\pi(\xi)\in\cB(\cK)$. In addition,
since $\pi(\xi^*)$ also extends by continuity to an operator $\pi(\xi^*)\in\cB(\cK)$ and 
taking into account \eqref{e:piastar}, it follows that $\pi(\xi)$ is adjointable and 
$\pi(\xi^*)=\pi(\xi)^*$. We conclude that $\pi$ is a $*$-representation of $\Gamma$ 
in $\cB^*(\cK)$, that is, the axiom (ivhl2) holds.

The uniqueness of the topologically minimal boundedly adjointable  invariant weak VH-space 
linearisation follows as usually.

(2)$\Ra$(1). Let $(\cK;\pi;V)$ be a boundedly adjointable  invariant weak  VH-space linearisation of the kernel 
$\fk$. We already know from the proof of Theorem~\ref{t:velin} that $\fk$ 
is positive 
semidefinite and it was shown in \eqref{e:keyxi} that $\fk$ is invariant under 
the action of 
$\Gamma$ on $X$. In order to show that the boundedness condition (c) holds 
as well, 
let $\alpha\in \Gamma$, $n\in\NN$, 
$x_1,\ldots,x_n\in X$, and $t_1,\ldots,t_n\in\CC$ be arbitrary. 
Then
\begin{align*} \sum_{j,k=1}^n \ol{t_k}t_j\fk(\alpha\cdot x_k,\alpha\cdot 
x_j)& = \sum_{j,k=1}^n \ol{t_k}t_j[\pi(\alpha^*)\pi(\alpha)V(x_k),V(x_j)]_\cK 
\\ & = \sum_{j,k=1}^n\ol{t_k}t_j [\pi(\alpha)V(x_k),\pi(\alpha)V(x_j)]_\cK \\
& = [\pi(\alpha)\sum_{k=1}^n t_k V(x_k),\pi(\alpha)\sum_{j=1}^n t_j V(x_j)]_\cK \\
& \leq \|\pi(\alpha)\|^2 [\sum_{k=1}^n t_k V(x_k),\sum_{j=1}^n t_j V(x_j)]_\cK \\
& = \|\pi(\alpha)\|^2 \sum_{j,k=1}^n \ol{t_k}t_j\fk(x_k,x_j),
\end{align*} and hence (c) holds with $c(\alpha)=\|\pi(\alpha)\|\geq 0$.

(2)$\Ra$(3). Let $(\cK;\pi;V)$ be a boundedly adjointable  invariant weak VH-space linearisation of the kernel 
$\fk$ with respect to the action of $\Gamma$ on $X$. Without loss of generality we can 
assume that it is topologically minimal. Indeed, since we have already proven the implication 
(2)$\Ra$(1),
we observe that during the proof of the implication (1)$\Ra$(2), we obtained a topologically
minimal invariant weak VH-space linearisation of $\fk$.
 
We use the notation and the facts established during the 
proof of the implication (2)$\Ra$(3) in Theorem~\ref{t:vhrepker}. Then, 
for all $x,y\in X$ we have
\begin{equation*} \fk_{\xi\cdot x}(y)=\fk(y,\xi\cdot x)=[V(y),V(\xi\cdot x)]_\cK
=[V(y),\pi(\xi) V(x)]_\cK,
\end{equation*} hence, letting $\rho(\xi)=U\pi(\xi)U^{-1}$, where 
$U\colon\cK\ra\cR$ is 
the unitary operator defined as in \eqref{e:defuv}, we obtain a $*$-representation of 
$\Gamma$ on the VH-space $\cR$ such that $\fk_{\xi\cdot x}=\rho(\xi)\fk_x$ for all 
$\xi\in\Gamma$ and $x\in X$.

(3)$\Ra$(2). Let $(\cR;\rho)$, where $\cR=\cR(\fk)$ is the weak reproducing kernel 
VH-space of $\fk$ and $\rho\colon\Gamma\ra\cB^*(\cR)$ is a $*$-representation 
such that 
$\fk_{\xi\cdot x}=\rho(\xi)\fk_x$ for all $\xi\in\Gamma$ and $x\in X$. As in the proof of 
the 
implication (3)$\Ra$(2) in Theorem~\ref{t:vhrepker}, we show that $(\cR;V)$, where 
$V$ is defined as in \eqref{e:vexak}, is a minimal weak linearisation of $\fk$. 
Letting $\pi=\rho$, it is then easy to see that $(\cR;\pi;V)$ is a boundedly adjointable  invariant weak VH-space 
linearisation of the kernel $\fk$ with respect to the action of $\Gamma$ on $X$.
\end{proof}

\subsection{Continuously Adjointable Invariant Weak VH-Space Linearisations.}
\label{ss:caiwvhsl}
Let $Z$ be an admissible space. A triple $(\cK;\pi;V)$ is called a \emph{continuously adjointable 
invariant weak VH-space linearisation} 
of the $Z$-valued kernel $\fk$ and the action of $\Gamma$ on $X$, if the requirements (ivhl1) and
(ivhl2) holds and, instead of (ihvl2), it satisfies
\begin{itemize}
\item[(ivhl2)$^\prime$] $\pi\colon \Gamma\ra\cL^*_{\mathrm{c}}(\cK)$ is a $*$-representation, 
that is, a multiplicative $*$-morphism.
\end{itemize}

Clearly,  for any continuously adjointable invariant weak VH-space linearisation $(\cK;\pi;V)$
of the kernel $\fk$, it follows that $\fk$ is invariant under the action of 
$\Gamma$ on $X$.

If, in addition to the axioms (ivhl1), (ivhl2)$^\prime$, and
(ivhl3), the triple $(\cK;\pi;V)$ has also the 
property (ivhl4), that is, the weak VH-space linearisation $(\cH;V)$ is topologically minimal,
then $(\cK;\pi;V)$ is called a \emph{topologically minimal continuously adjointable
invariant weak VH-space linearisation} of  the $Z$-kernel $\fk$ with respect to the action of 
$\Gamma$ on $X$.

The unitary equivalence of two continuously adjointable invariant weak 
VH-space linearisations $(\cK;\pi;V)$ and $(\cK^\prime;\pi^\prime;V^\prime)$ 
of the same kernel $\fk$ is defined as in the case of boundedly adjointable  invariant weak VH-space 
linearisations and their topological minimality implies their unitary equivalence.

The analog of Proposition~\ref{p:vhinvkolmolinear} for topologically
minimal continuously adjointable invariant weak VH-space linearisations holds as well.

The next theorem is the analogue of Theorem \ref{t:vhinvkolmo} for continuously adjointable
invariant weak VH-space linearisations in which
the boundedness condition $1.(c)$ of Theorem \ref{t:vhinvkolmo} is replaced with
a weaker one.
\begin{theorem}\label{t:vhinvkolmow} 
Let $\Gamma$ be a $*$-semigroup that 
acts on the nonempty set $X$, and let $\fk\colon X\times X\ra Z$ be a $Z$-valued 
kernel for some admissible space $Z$. 
The following assertions are equivalent:
\begin{itemize}
\item[(1)] $\fk$ satisfies the following conditions:
\begin{itemize}
\item[(a)] $\fk$ is weakly positive semidefinite.
\item[(b)] $\fk$ is invariant under the action of\, $\Gamma$ on $X$, that is, 
\eqref{e:invariant} holds.
\item[(c)] For any $\alpha\in \Gamma$ and any seminorm $p\in S(Z)$, 
there exists a seminorm $q\in S(Z)$ and a constant $c(\alpha)\geq 0$ such that
\begin{equation}\label{e:cboundw} p(\sum_{j,k=1}^n t_j\ol{t_k}\fk(\alpha\cdot x_k,\alpha\cdot 
x_j)) \leq c(\alpha)^2 q(\sum_{j,k=1}^n t_j\ol{t_k}\fk(x_k,x_j)),
\end{equation} for $n\in\NN$, 
all $x_1,\ldots,x_n\in X$, and all $t_1,\ldots,t_n\in \CC$.
\end{itemize}
\item[(2)] $\fk$ has a continuously adjointable invariant weak VH-space linearisation $(\cK;\pi;V)$.
\item[(3)] $\fk$ admits a weak $Z$-reproducing kernel VH-space $\cR$ and 
there exists a $*$-representation $\rho\colon \Gamma\ra\cL^*_{\mathrm{c}}(\cR)$ such that 
$\rho(\xi)\fk_x=\fk_{\xi\cdot x}$ for all $\xi\in\Gamma$, $x\in X$.
\end{itemize}

Moreover, in case any of the assertions \emph{(1)}, \emph{(2)}, or \emph{(3)} holds, 
then a topologically minimal continuously adjointable invariant VH-space linearisation 
can be constructed 
and a topologically minimal weak $Z$-reproducing kernel VH-space $\cR$ as in assertion 
\emph{(3)} can be constructed as well.
\end{theorem}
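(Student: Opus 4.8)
The plan is to mirror the proof of Theorem~\ref{t:vhinvkolmo} as closely as possible, replacing every order-theoretic boundedness estimate by its seminorm analogue. Recall from Subsection~\ref{ss:vhs} that the topology of a topological VE-space over $Z$ is generated by the seminorms $\tl p(h)=p([h,h])^{1/2}$, $p\in S(Z)$, so a linear operator $T$ between such spaces is continuous precisely when, for each $p\in S(Z)$, there are $q\in S(Z)$ and $C\geq 0$ with $\tl p(Th)\leq C\tl q(h)$ for all $h$. I would dispatch the implications (2)$\Ra$(3) and (3)$\Ra$(2), together with the uniqueness of the topologically minimal linearisation, exactly as in Theorem~\ref{t:vhinvkolmo}: the unitary $U$ of \eqref{e:defuv} is bicontinuous, so conjugation by $U$ carries $\cL^*_{\mathrm{c}}(\cK)$ onto $\cL^*_{\mathrm{c}}(\cR)$, and the reproducing-kernel dictionary of Proposition~\ref{p:vhrep} transfers unchanged. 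Thus the real work lies in the equivalence (1)$\Leftrightarrow$(2).

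For (1)$\Ra$(2) I would start from the minimal invariant weak VE-space linearisation $(\cE;\pi;V)$ with $\cE=Z^X_K$ produced in the proof of (1)$\Ra$(2) of Theorem~\ref{t:veinvkolmo}, and give $\cE$ its natural topology. The only genuinely new step is the continuity of each $\pi(\xi)$. Writing $f=Kg$ with $g\in\CC^X_0$, the same computation as in the proof of (1)$\Ra$(2) of Theorem~\ref{t:vhinvkolmo} yields
\begin{equation*}
[\pi(\xi)f,\pi(\xi)f]_\cE=[\pi(\xi^*\xi)f,f]_\cE=\sum_{x,y\in X}g(y)\ol{g(x)}\fk(\xi\cdot y,\xi\cdot x).
\end{equation*}
Fixing $p\in S(Z)$ and feeding this into the weak boundedness hypothesis (c), I obtain $q\in S(Z)$ and $c(\xi)\geq 0$ with
\begin{equation*}
\tl p(\pi(\xi)f)^2=p\bigl([\pi(\xi)f,\pi(\xi)f]_\cE\bigr)\leq c(\xi)^2\,q\Bigl(\sum_{x,y\in X}g(y)\ol{g(x)}\fk(y,x)\Bigr)=c(\xi)^2\,\tl q(f)^2,
\end{equation*}
whence $\tl p(\pi(\xi)f)\leq c(\xi)\tl q(f)$. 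This is exactly continuity of $\pi(\xi)$, so each $\pi(\xi)$ extends uniquely by continuity to an operator on the VH-space completion $\cK$ of $\cE$.

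The step I expect to be the conceptual crux is verifying that the extended $\pi$ lands in $\cL^*_{\mathrm{c}}(\cK)$ and not merely in $\cL_{\mathrm{c}}(\cK)$. By density of $\cE$ in $\cK$ and continuity of the gramian, the relation \eqref{e:piastar} persists on all of $\cK$, so the adjoint of $\pi(\xi)$ is $\pi(\xi^*)$. The decisive observation is that the continuity estimate above was derived for \emph{every} element of $\Gamma$, in particular for $\xi^*$; hence $\pi(\xi)^*=\pi(\xi^*)$ is itself continuous, and $\pi(\xi)$ is continuously adjointable. This is precisely the phenomenon flagged in the introduction: because (c) is imposed for all of $\Gamma$, no separate hypothesis forcing continuity of the adjoint is required, and the second boundedness anomaly does not intervene.

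Finally, for the converse (2)$\Ra$(1), weak positive semidefiniteness and invariance follow as in Theorem~\ref{t:velin} and \eqref{e:keyxi}. To recover (c), I would fix $\alpha\in\Gamma$ and $p\in S(Z)$, set $u=\sum_k t_kV(x_k)$, and use $\pi(\alpha)^*=\pi(\alpha^*)$ to write $\sum_{j,k}\ol{t_k}t_j\fk(\alpha\cdot x_k,\alpha\cdot x_j)=[\pi(\alpha)u,\pi(\alpha)u]_\cK$; continuity of $\pi(\alpha)$ then supplies $q\in S(Z)$ and $C\geq 0$ with $p([\pi(\alpha)u,\pi(\alpha)u]_\cK)\leq C^2 q([u,u]_\cK)$, which is \eqref{e:cboundw}. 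The main bookkeeping hazard throughout is keeping the two seminorms $p$ and $q$ of condition (c) aligned with the definition of continuity between topological VE-spaces; once that is handled, the anticipated difficulty of controlling the adjoint simply evaporates.
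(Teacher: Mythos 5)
Your proposal is correct and follows essentially the same route as the paper's proof: condition (c) yields the seminorm estimate $p([\pi(\xi)f,\pi(\xi)f])\leq c(\xi)^2 q([f,f])$ giving continuity of each $\pi(\xi)$ on the minimal VE-space linearisation, the extension to the completion together with the persistence of \eqref{e:piastar} and the same estimate applied to $\xi^*$ gives continuous adjointability, and the converse direction recovers (c) from the continuity of $\pi(\alpha)$ exactly as you describe. The only detail the paper mentions that you omit is that $S(Z)$ is directed (used to combine seminorms in the (2)$\Ra$(1) step), which is minor.
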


\begin{proof}
(1)$\Ra$(2). We consider the notation and constructions as in the proof of the implication 
(1)$\Ra$(2) of Theorem~\ref{t:veinvkolmo}, and follow the same 
idea as in the proof of the implication (1)$\Ra$(2) of Theorem~\ref{t:vhinvkolmo}, 
with the difference that the weak boundedness condition 1.(c) is used. 
For any $\xi\in\Gamma$, $f=Kg$ and $p\in S(Z)$ 
there exist $q\in S(Z)$ and $c(\xi)\geq 0$ such that
\begin{align*} p([\pi(\xi)f,\pi(\xi)f]_{\cK}) & = p(\sum_{x,y\in X}g(y)\ol{g(x)}\fk(\xi\cdot y,\xi\cdot x)) \\
& \leq c(\xi)^2 q(\sum_{x,y\in X}g(y)\ol{g(x)}\fk(y,x)) 
 = c(\xi)^2 q([f,f]_{\cK}),
\end{align*} hence the continuity of $\pi(\xi)$ is proven. 
This implies that $\pi(\xi)$ can be uniquely 
extended by continuity to an operator $\pi(\xi)\in\cL_{\mathrm{c}}(\cK)$. In addition,
since $\pi(\xi^*)$ also extends by continuity to an operator $\pi(\xi^*)\in\cL_{\mathrm{c}}(\cK)$ and 
taking into account \eqref{e:piastar}, it follows that $\pi(\xi)$ is adjointable and 
$\pi(\xi^*)=\pi(\xi)^*$. We conclude that $\pi$ is a $*$-representation of $\Gamma$ 
in $\cL^*_{\mathrm{c}}(\cK)$.

The uniqueness of the topologically minimal continuously adjointable invariant VH-space 
linearisation follows as usually.

(2)$\Ra$(1). By the proof of the implication (2)$\Ra$(1) of Theorem \ref{t:vhinvkolmo}, we only 
have to show that the boundedness condition (c) holds. 
Let $\alpha\in \Gamma$, $n\in\NN$, 
$x_1,\ldots,x_n\in X$, and $t_1,\ldots,t_n\in\CC$ be arbitrary. 
Then, due to the continuity of $\pi(\alpha)$ and taking into account the $S(Z)$ is directed,
there exist $q\in S(Z)$ and $c(\alpha)\geq 0$ such that
\begin{align*} p(\sum_{j,k=1}^n \ol{t_k}t_j\fk(\alpha\cdot x_k,\alpha\cdot 
x_j))& = p([\pi(\alpha)\sum_{k=1}^n t_k V(x_k),\pi(\alpha)\sum_{j=1}^n t_j V(x_j)]_\cK) \\ 
& \leq c(\alpha)^2 q([\sum_{k=1}^n t_k V(x_k),\sum_{j=1}^n t_j V(x_j)]_\cK) \\
& = c(\alpha)^2 q(\sum_{j,k=1}^n \ol{t_k}t_j\fk(x_k,x_j)).
\end{align*} 

(2)$\Ra$(3). Let $(\cK;V;\pi)$ be a continuously adjointable weak VH-space linearisation
of the kernel $\fk$ with respect to the action of $\Gamma$ on $X$.
Using exactly the same ideas in the proof of the implication (2)$\Ra$(1) of Theorem \ref{t:vhinvkolmo},
we obtain a continuous $*$-representation of 
$\Gamma$ on the VH-space $\cR$ defined by $\rho(\xi)=U\pi(\xi)U^{-1}$, where 
$U\colon\cK\ra\cR$ is 
the unitary operator defined as in \eqref{e:defuv}.

(3)$\Ra$(2). Let $(\cR;\rho)$, where $\cR=\cR(\fk)$ is the weak reproducing kernel 
VH-space of $\fk$ and $\rho\colon\Gamma\ra\cL_{\mathrm{c}}^*(\cR)$ is a $*$-representation 
such that 
$\fk_{\xi\cdot x}=\rho(\xi)\fk_x$ for all $\xi\in\Gamma$ and $x\in X$. As in the proof of 
the 
implication (3)$\Ra$(2) in Theorem~\ref{t:vhinvkolmo}, 
letting $\pi=\rho$, it is then easy to see that $(\cR;\pi;V)$ is a weak VH-space 
linearisation of the kernel $\fk$ and $\pi$ satisfies the required properties.
\end{proof}

\section{Unification of Some Dilation Theorems}\label{s:usdt}

In this section we show how various dilation theorems can be obtained as special cases of
Theorem~\ref{t:veinvkolmo}, Theorem~\ref{t:vhinvkolmo}, and Theorem~\ref{t:vhinvkolmow}.


\subsection{Invariant Kernels with Values Adjointable Operators.}
We show that Theorem 2.8 in \cite{AyGheondea} can be seen as a special case of Theorem 
\ref{t:veinvkolmo}. We first recall necessary definitions 
from \cite{AyGheondea}.

In this subsection we will consider a kernel on a nonempty set $X$
and taking values in
$\cL^*(\cH)$, for a VE-space $\cH$
over an ordered $*$-space $Z$, that is, 
a map $\fl\colon X \times X \ra \cL^*(\cH)$. 

A kernel $\fl\colon X \times X \ra \cL^*(\cH)$ is called
\emph{positive semidefinite} if  
for all $n \in \NN$, 
$x_1,x_2,\cdots,x_n \in X$,
and  $h_1,h_2,\cdots,h_n \in \cH$, we have
\begin{equation}\label{e:hpsd}
\sum_{i,j=1}^{n}[\fl(x_i,x_j)h_j,h_i]_{\cH}\geq 0.
\end{equation}

An \emph{invariant $\cL^*(\cH)$-valued VE-space linearisation} of a kernel $\fl$ 
and an action of a $*$-semigroup $\Gamma$
on $X$ is, by definition, a triple $(\tl{\cE};\tl{\pi};\tl{V})$ such that 
\begin{itemize}
\item[(hvel1)]$\tl{\cE}$ is a VE-space
over the same ordered $*$-space $Z$,
\item[(hvel2)]$\tl{\pi}\colon\Gamma \ra \cL^*(\tilde\cE)$
is a $*$-representation, 
\item[(hvel3)]$\tl{V}\colon X \ra \cL^*(\cH,
\tl{\cE})$, satisfying $\fk(x,y)=\tl{V}(x)^*\tl{V}(y)$ for all $x,y \in X$ and 
$\tl{V}(\xi\cdot x)=\tl{\pi}(\xi)\tl{V}(x)$
for all $x \in X$, $\xi \in \Gamma$.
\end{itemize}
If an invariant $\cL^*(\cH)$-valued VE-space
linearisation has the property that
$\lin V(X)\cH=\tl{\cE}$,
then it is called \emph{minimal}. Two invariant $\cL^*(\cH)$-VE-space linearisations 
$(\tl{\cE};\tl{\pi};\tl{V})$ and $(\tl{\cF};\tl{\rho};\tl{W})$ of the same kernel $\fl$ are called 
\emph{unitarily equivalent} if there exists a unitary operator $U:\tl{\cE} \ra \tl{\cF}$
such that $U\tl{\pi}(\gamma)=\tl{\rho}(\gamma)U$ for all $\gamma\in\Gamma$ and
$U\tl{V}(x)=\tl{W}(x)$ for all $x \in X$. 

Let $\cH^X$ be the vector space of all maps $f\colon X\ra\cH$, 
for a nonempty set $X$ and a VE-space $\cH$ over the 
ordered $*$-space $Z$. A VE-space $\tl{\cR}$ over the same ordered $*$-space $Z$ is
called a \emph{$\cL^*(\cH)$-reproducing kernel VE-space} on $X$ of the kernel $\fl$ if 
\begin{itemize}
\item[(hrk1)] $\tl{\cR}$
is a vector subspace of $\cH^X$.
\item[(hrk2)] For all $x \in X$ and $h \in \cH$, the $\cH$-valued function 
$\fl_{x}h:=\fl(\cdot, x)h$ belongs to $ \tl{\cR}$.
\item[(hrk3)] For all $f \in \tl{\cR}$
we have $[f(x),h]_{\cH}=[f,\fl_{x}h]_{\tl{\cR}}$ 
for all $x \in X$ and $h \in \cH$.
\end{itemize}
The space $\tl{\cR}$ is \emph{minimal} if $\tl{\cR}=\lin\{\fl_xh\mid x\in X,\ h\in\cH\}$.

\begin{theorem}[Theorem 2.8 in \cite{AyGheondea}] \label{t:vevergh}
Let $\Gamma$ be a $*$-semigroup acting on a nonempty set $X$, 
$\cH$ be a VE-space on an ordered $*$-space $Z$, 
and $\fl\colon X \times X \ra \cL^*(\cH)$ be
a kernel. The following assertions are equivalent:
\begin{itemize}
\item[(1)] $\fl$ satisfies the following properties:
\begin{itemize}
\item[(a)] $\fl$ is positive semidefinite.
\item[(b)] $\fl$ is invariant under the action of $\Gamma$ on $X$. 
\end{itemize}
\item[(2)] $\fl$ has an invariant $\cL^*(\cH)$-valued VE-space linearisation 
$(\tl{\cE};\tl{\pi};\tl{V})$.
\item[(3)] $\fl$ admits a $\cL^*(\cH)$-reproducing kernel
VE-space $\tl{\cR}$ and there exists a
$*$-representation $\tl{\rho}: \Gamma \ra \cL^*(\tl{\cR})$ such that
$\tl{\rho}(\xi) \fl_x h = \fl_{\xi\cdot x}h$ 
for all $\xi \in \Gamma$, $x \in X$, $h \in \cH$.
\end{itemize}
Moreover, in case any of the assertions (1), (2) or (3)
holds, a minimal invariant $\cL^*(\cH)$-VE-space linearisation 
can be constructed, and a pair 
$(\tl{\cR};\tl{\rho})$ as in (3) with $\tl{\cR}$ can be always obtained as well.
\end{theorem}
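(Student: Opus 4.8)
The plan is to deduce Theorem~\ref{t:vevergh} from Theorem~\ref{t:veinvkolmo} by manufacturing, out of the operator-valued kernel $\fl$, an ordinary $Z$-valued kernel on an enlarged index set, arranged so that positive semidefiniteness, invariance, linearisations and reproducing kernel spaces all translate back and forth. First I would set $Y=X\times\cH$ and let $\Gamma$ act on $Y$ through the first coordinate only, $\xi\cdot(x,h)=(\xi\cdot x,h)$, which is at once checked to be an action. Then I define the $Z$-valued kernel $\fk$ on $Y$ by
\[
\fk\bigl((x,h),(y,k)\bigr)=[h,\fl(x,y)k]_\cH,\qquad (x,h),(y,k)\in Y.
\]
The placement of the conjugate-linear slot is the point on which everything hinges: with this choice the vectors $V(x,h)$ produced by a linearisation will depend \emph{linearly}, not conjugate-linearly, on $h$, so that they assemble into genuine (adjointable) operators $\cH\ra\cE$.

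Next I would translate condition (1). Since $t_ih_i$ runs over all of $\cH$ as $t_i\in\CC$ and $h_i\in\cH$ vary, absorbing the scalars into the vectors shows that weak positive semidefiniteness of $\fk$ amounts to $\sum_{i,j}[h_i,\fl(x_i,x_j)h_j]_\cH\ge 0$, which by (ve2) is the adjoint of the expression in \eqref{e:hpsd} and hence, being selfadjoint when nonnegative, is equivalent to positive semidefiniteness of $\fl$. A direct computation shows that $\fk$ is invariant in the sense of \eqref{e:invariant} if and only if $\fl(y,\xi\cdot x)=\fl(\xi^*\cdot y,x)$ for all $x,y,\xi$, i.e.\ iff $\fl$ is invariant. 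Thus condition (1) for $\fl$ is equivalent to condition (1) for $\fk$, and Theorem~\ref{t:veinvkolmo} applies to $\fk$ verbatim.

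For the linearisation equivalence I would argue both directions through a dictionary. Given an invariant weak VE-space linearisation $(\cE;\pi;V)$ of $\fk$, taken minimal, I set $\tl V(x)h:=V((x,h))$. From $[V(x,h),V(y,k)]_\cE=[h,\fl(x,y)k]_\cH$ together with minimality and (ve1) one checks that $h\mapsto V(x,h)$ is linear, that $\tl V(x)$ is adjointable with $\tl V(x)^*\tl V(y)=\fl(x,y)$ (define $\tl V(x)^*$ on $\lin V(Y)=\cE$ by $\tl V(x)^*\sum_j V(y_j,k_j)=\sum_j\fl(x,y_j)k_j$ and verify well-definedness via (ve1)), and that $\tl\pi:=\pi$ intertwines, $\tl V(\xi\cdot x)=\tl\pi(\xi)\tl V(x)$. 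Conversely, from an invariant $\cL^*(\cH)$-valued linearisation $(\tl\cE;\tl\pi;\tl V)$ of $\fl$ one recovers an invariant weak VE-space linearisation of $\fk$ by $V(x,h)=\tl V(x)h$ and $\pi=\tl\pi$. Minimality and unitary equivalence are preserved in both directions, which yields the concluding ``moreover'' clause.

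Finally, for the reproducing kernel equivalence I would use the identification $\cH^X\ni f\mapsto F_f\in Z^Y$, $F_f(y,k)=[k,f(y)]_\cH$, which is injective by (ve1). Given an $\cL^*(\cH)$-reproducing kernel VE-space $\tl\cR$ of $\fl$, I put $\cR=\{F_f\mid f\in\tl\cR\}$ with gramian $[F_f,F_g]_\cR:=[f,g]_{\tl\cR}$. Since $\fk_{(x,h)}(y,k)=[k,(\fl_xh)(y)]_\cH=F_{\fl_xh}(y,k)$, axioms (rk1)--(rk3) for $\cR$ with kernel $\fk$ follow from (hrk1)--(hrk3); the reproducing property uses (ve2) to turn $[\fl_xh,f]_{\tl\cR}$ into $[f(x),h]_\cH^*=[h,f(x)]_\cH=F_f(x,h)$. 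Transporting the representation by $\rho(\xi)F_f:=F_{\tl\rho(\xi)f}$ gives the required $\rho\colon\Gamma\ra\cL^*(\cR)$ with $\rho(\xi)\fk_{(x,h)}=\fk_{\xi\cdot(x,h)}$, and reversing the identification recovers (3) for $\fl$. The hard part is precisely this last step: the natural objects live in different function spaces ($\cH^X$ versus $Z^Y$) and, absent a Riesz-type representation, there is no automatic passage between them, so everything must be carried out through the explicit bijection $f\mapsto F_f$ while keeping scrupulous track of the conjugate-linear slot fixed at the outset; the adjointability claim in the linearisation step is the other delicate point, but it is dispatched cleanly by minimality and (ve1).
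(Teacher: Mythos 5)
Your proposal is correct and follows essentially the same route as the paper: define the $Z$-valued kernel on $X\times\cH$ (your $[h,\fl(x,y)k]_\cH$ agrees with the paper's $[\fl(y,x)h,g]_\cH$ once $\fl$ is Hermitian), apply Theorem~\ref{t:veinvkolmo}, and translate linearisations and reproducing kernel spaces back through $\tl V(x)h=V(x,h)$ and the embedding $f\mapsto F_f$. The two delicate points you flag — adjointability of $\tl V(x)$ via minimality and the absence of a Riesz-type passage between $\cH^X$ and $Z^{X\times\cH}$ — are exactly the ones the paper addresses (the latter in Remark~\ref{r:tif}), and your resolutions match.
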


\begin{proof}
(1)$\Ra$(2). Define a kernel $\fk\colon (X\times\cH)\times(X\times\cH)\ra Z$ by 
\begin{equation*}
\fk((x,h),(y,g)):=[\fl(y,x)h,g]_{\cH},\quad x,y\in X,\ h,g\in\cH.
\end{equation*}
 Since $\fl$ is semipositive definite in the sense of \eqref{e:hpsd}, 
$\fk$ is weakly positive semidefinite:
\begin{align*}
\sum_{k,j=1}^{n}\ol{t_k}t_j\fk((x_k,h_k),(x_j,h_j))
&=\sum_{k,j=1}^{n}\ol{t_k}t_j[\fl(x_j,x_k)h_k,h_j] 
=\sum_{k,j=1}^{n}[\fl(x_j,x_k)t_kh_k,t_jh_j]\geq 0
\end{align*}
for all $n\in\NN$, $\{x_j\}_{j=1}^{n}\in X$, $\{h_j\}_{j=1}^{n}\in\cH$ and 
$\{t_j\}_{j=1}^{n}\in\CC$.

Define an action of $\Gamma$ on $(X\times\cH)$ in the following way: 
$\xi\cdot(x,h)=(\xi\cdot x,h)$ for all $\xi\in\Gamma$, $x\in X$ and $h\in\cH$. 
Using the $\Gamma$ invariance of $\fl$ it follows that
$\fk$ is $\Gamma$ invariant: letting $\xi\in\Gamma$, $x,y\in X$ and $g,h\in\cH$ 
we have 
\begin{align*}
\fk(\xi\cdot(x,h),(y,g))&=[\fl(y,\xi\cdot x)h,g]=[\fl(\xi^*\cdot y, x)h,g] 
=\fk((x,h),\xi^*(y,g)).
\end{align*}

By Theorem \ref{t:veinvkolmo}, there exists a minimal weak VE-space linearisation 
$(\cE;\pi;V)$ of $\fk$ and the action of $\Gamma$ on $(X\times\cH)$. By construction, 
see \eqref{e:vexa},
it is clear that $V(x,h)$ depends linearly on $h\in\cH$, therefore, for each $x\in X$ 
a linear operator of VE-spaces $\tilde V(x)\colon\cH\ra\cE$ can be defined by $\tilde V(x)h=V(x,h)$. 

We now have 
$[\tilde V(x)h,\tilde V(y)g]_{\cE}=\fk((x,h),(y,g))=[\fl(y,x)h,g]_{\cH}$ for all $x,y\in X$ 
and $h,g\in\cH$. By the minimality of $\cE$, it follows that 
$\tilde V(x)$ is an adjointable operator with $\tilde V(y)^*\tilde V(x)=\fl(y,x)$ 
for all $x,y\in X$. 

On the other hand, we have $\pi(\xi)V(x,h)=V(\xi\cdot x,h)=\tilde V(\xi\cdot x)h$ 
for all $h\in\cH$ 
and hence $\pi(\xi)\tilde V(x)=\tilde V(\xi\cdot x)$ for all $\xi\in\Gamma$ and $x\in X$, 
showing that $(\cE;\pi;\tilde V)$ is a minimal invariant $\cL^*(\cH)$-valued VE-space linearisation 
of the kernel $\fl$ and the action of $\Gamma$ on $X$.

(2)$\Ra$(3). Let $(\tl{\cE};\tl{\pi};\tl{V})$ be an invariant $\cL^*(\cH)$-valued VE-space linearisation 
of the kernel $\fl$, hence $\fl(x,y)=\tl{V}(x)^*\tl{V}(y)$ for all $x,y\in X$.
Define $V\colon(X\times\cH)\ra\tl{\cE}$ by 
\begin{equation}\label{e:vexah}
V(x,h)=\tl{V}(x)h,\quad x\in X,\ h\in\cH.
\end{equation} We also 
have 
\begin{equation}\label{e:tep}
\tl{\pi}(\xi)V(x,h)=\tl{\pi}(\xi)\tl{V}(x)h=\tl{V}(\xi\cdot x)h=V(\xi\cdot x,h),\quad
\xi\in\Gamma,\ x\in X,\ h\in\cH,
\end{equation} hence $\tl{\pi}(\xi)$ leaves $\tl{\cE}_0=\lin V(X,\cH)$ invariant
for all $\xi\in\Gamma$. In the following, we denote by the same symbol
$\tl{\pi}\colon\Gamma\ra\cL^*(\tl{\cE}_0)$, the $*$-representation viewed as
$\tl{\pi}(\gamma)\colon \tl{\cE}_0\ra\tl{\cE}_0$ for all $\gamma\in\Gamma$. Then 
$(\tl{\cE}_0;\tl{\pi};V)$ is a minimal invariant weak VE-space 
linearisation for the kernel $\fk\colon(X\times\cH)\times(X\times\cH)\ra Z$ 
defined by 
\begin{align*}
\fk((x,h),(y,g)) & =[V(x,h),V(y,g)]_{\tl{\cE}} \\
& = [\tl{V}(x)h,\tl{V}(y)g]_{\tl{\cE}} = [h,\tl{V}(x)^*\tl{V}(y)g]_{\tl{\cE}} \nonumber \\
& = [h,\fl(x,y)g]_\cH,\quad x,y\in X,\ h,g\in\cH,\nonumber
\end{align*} 
and the action of $\Gamma$ on $(X\times\cH)$ given by 
\begin{equation}\label{e:xic}
\xi\cdot(x,h)=(\xi\cdot x,h),\quad \xi\in\Gamma,\ x\in X,\ h\in\cH.
\end{equation}

By Theorem~\ref{t:veinvkolmo}, there exists a minimal weak $Z$-reproducing kernel VE-space 
$\cR\subseteq Z^{X\times \cH}$, with reproducing kernel $\fk$, 
and a $*$-representation $\rho\colon\Gamma\ra\cL^*(\cR)$ 
such that $\rho(\xi)\fk_{(x,h)}=\fk_{\xi\cdot(x,h)}$ 
for all $\xi\in\Gamma$, $x\in X$, $h\in\cH$. As the proof of Theorem~\ref{t:veinvkolmo} shows,
without loss of generality we can assume that $\cR$ is the collection of all maps $X\times \cH\ra Z$ 
defined by $X\times\cH\ni (x,h)\mapsto [\tl{V}(x)h,f]_{\tl{\cE}}$, 
where $f\in\tl{\cE}_0$,
which provides an identification of $\cR$ with $\tl{\cE}_0$ by the formula
\begin{equation}\label{e:fexaha}
f(x,h)=[V(x,h),f]_\cR=[\tl{V}(x)h,f]_{\tl{\cE}}=[h,\tl{V}(x)^*f]_\cH,\quad h\in\cH.
\end{equation}
Consequently, for each $f\in\cR$ and $x\in X$,
there exists a unique vector $\tilde f(x)=\tl{V}(x)^*f\in\cH$ such that
\begin{equation}\label{e:fexah}
f(x,h)=[h,\tilde f(x)]_\cH,\quad h\in\cH,
\end{equation}
which gives rise to a map $\cR\ni f\mapsto\tl{f}\in \cH^X$. Let 
$\tl{\cR}$ be the vector space of all $\tl{f}$, for $f\in\cR$. Since, by the reproducing property
of the kernel $\fk$ and \eqref{e:fexah} we have 
\begin{equation*}
[\fk_{(x,h)},\fk_{(y,g)}]_\cR=\fk_{(y,g)}(x,h)=[h,\widetilde{\fk_{(y,g)}}(x)]_\cH,\quad h,g\in\cH,\ x,y\in X,
\end{equation*}
taking into account the reproducing property of the kernel $\fl$, 
it follows that $\fl_{x}h=\widetilde{\fk_{(x,h)}}\in\tl{\cR}$ for all $x\in X$ and $h\in\cH$. 

It is easy to see that the map 
$U\colon\cR\ni f \ra \tl{f}\in\tl{\cR}$ is linear, one-to-one, and onto. 
Therefore, defining $[\tl{f},\tl{g}]_{\tl{\cR}}:= [f,g]_{\cR}$ 
makes $\tl{\cR}$ a VE-space, and $U$ becomes a unitary 
operator of VE-spaces. Defining $\tl{\rho}:= U\rho U^*$, the pair
$(\tl{\cR},\tl{\rho})$ has all the required properties.

(3)$\Ra$(1). Assume that $(\tl{\cR};\tl{\rho})$ is a pair consisting of an
$\cL^*(\cH)$-reproducing kernel VE-space of $\fl$ and a $*$-representation 
$\tl{\rho}:\Gamma\ra\cL^*(\tl{\cR})$ such that 
$\rho(\xi)\fl_{x}h=\fl_{\xi\cdot x}h$ for all $\xi\in\Gamma$, 
$x\in X$, $h\in\cH$. We have 
\begin{align*}
\sum_{i,j=1}^n [\fl(x_i,x_j)h_j,h_i]&=\sum_{i,j=1}[\fl_{x_j}h_j(x_i),h_i] 
=\sum_{i,j=1}^n[\fl_{x_j}h_j,\fl_{x_i}h_i] 
=[\sum_{j=1}\fl_{x_j}h_j,\sum_{i=1}\fl_{x_i}h_i]\geq 0
\end{align*}
for all $n\in\NN$, $\{ x_i \}_{i=1}^n\in X$, $\{ h_i \}_{i=1}^n\in\cH$. 
Therefore $\fl$ is positive semidefinite in the sense of \eqref{e:hpsd}. Moreover, by (hrk3)
\begin{align*}
[\fl(x,\xi\cdot y)h,g] =[\fl_{\xi\cdot y}h(x),g]=[\tl{\rho}(\xi)\fl_{y}h(x),g]
 =[\tl{\rho}(\xi)\fl_{y}h,\fl_{x}g]=[\fl_{y}h,\tl{\rho}(\xi^*)\fl_{x}g]=[\fl(\xi^*x,y)h,g],
\end{align*}
for all $x,y\in X$ and $g,h\in\cH$, and 
the invariance of the kernel $\fl$ is proven. 
\end{proof}

\begin{remark}\label{r:tif}
The crucial point in the proof of the implication (2)$\Ra$(3) of Theorem~\ref{t:vevergh} is the
proof of \eqref{e:fexah} which we obtained as a consequence of the identification of $\cR$ with 
$\tilde\cE_0$. In the following we show that there is a direct proof of \eqref{e:fexah}, without using
this identification.

By minimality, $\cR=\lin\{\fk_{(x,h)}\mid x\in X,\ h\in\cH\}$ so let 
$f=\sum_{j=1}^n\alpha_j\fk_{(y_j,g_j)}$ for some $n\in\NN$, $y_1,\ldots,y_n\in X$, 
$g_1,\ldots,g_n\in\cH$ and $\alpha_1,\ldots,\alpha_n\in\CC$, be an arbitrary element $f\in\cR$. 
Then, for any $x\in X$ and $h\in\cH$, we have
\begin{align*} f(x,h) & = \sum_{j=1}^n\alpha_j\fk_{(y_j,g_j)}(x,h) 
= \sum_{j=1}^n\alpha_j\fk((x,h),(y_j,g_j)) = \sum_{j=1}^n\alpha_j[V(x,h),V(y_j,g_j)]_{\tilde \cE} \\
& = \sum_{j=1}^n\alpha_j[\tilde V(x)h,\tilde V(y_j)g_j]_{\tilde E}= \sum_{j=1}^n\alpha_j
[h,\tilde V(x)^*\tilde V(y_j)g_j]_\cH\\
& =[h,\tilde V(x)^*\sum_{j=1}^n\alpha_j\tilde V(y_j)g_j]_\cH,
\end{align*}
hence, letting $\tilde f(x)=V(x)^*\sum_{j=1}^n\alpha_j\tilde V(y_j)g_j$, \eqref{e:fexah} holds. 
\end{remark}

\subsection{Invariant Kernels with Values Continuously Adjointable Operators.}
In this subsection we show that Theorem 2.10 in \cite{AyGheondea2} can be recovered as a 
special case of Theorem \ref{t:vhinvkolmow}. 
We first review definitions in \cite{AyGheondea2} that we will use in this subsection.

Let $X$ be a nonempty set and let $\cH$ be a VH-space over an admissible space $Z$. 
In this subsection we will consider kernels $\fk\colon X\times X\ra \cL^*_{\mathrm{c}}(\cH)$.
Such a kernel $k$ is called \emph{positive semidefinite} if it is $n$-positive for all natural numbers 
$n$, in the sense of \eqref{e:hpsd}.

 A \emph{$\cL^*_{\mathrm{c}}(\cH)$-valued VH-space linearisation} of $\fk$, or  
\emph{$\cL^*_{\mathrm{c}}(\cH)$-valued VH-space Kolmogorov decomposition} of $\fk$, is
a pair $(\cK;V)$, subject to the following conditions:
  \begin{itemize}
  \item[(vhl1)] $\cK$ is a VH-space over $Z$.
  \item[(vhl2)] $V\colon X\ra\cL^*_{\mathrm{c}}(\cH,\cK)$ satisfies $\fk(x,y)=V(x)^*V(y)$ 
for all $x,y\in X$. \end{itemize}
$(\cK;V)$ is called \emph{topologically minimal} if
  \begin{itemize}
  \item[(vhl3)] $\lin V(X)\cH$ is dense in $\cK$.
  \end{itemize}
	
We call $\fk$ \emph{$\Gamma$-invariant} if
\begin{equation}\fk(\xi\cdot x,y)=\fk(x,\xi^*\cdot y),\quad
  \xi\in\Gamma,\ x,y\in X.
\end{equation} 
A triple $(\cK;\pi;V)$ is called
a \emph{$\Gamma$-invariant $\cL^*_{\mathrm{c}}(\cH)$-valued VH-space linearisation} for $\fk$ if
\begin{itemize}
\item[(ihl1)] $(\cK;V)$ is an $\cL^*_{\mathrm{c}}(\cH)$-valued VH-space linearisation of $\fk$.
\item[(ihl2)] $\pi\colon \Gamma\ra\cL^*_{\mathrm{c}}(\cK)$ is a $*$-representation.
\item[(ihl3)] $V(\xi\cdot x)=\pi(\xi)V(x)$ for all $\xi\in\Gamma$ and all $x\in X$.
\end{itemize}
Also, $(\cK;\pi;V)$ is \emph{topologically minimal} 
if the $\cL^*_{\mathrm{c}}(\cH)$-VH-space linearisation $(\cK;V)$ is
topologically minimal, that is, $\cK$ is the closure of the linear span of $V(X)\cH$.

A VH-space $\cR$ over the ordered $*$-space 
$Z$ is called a \emph{$\cL^*_{\mathrm{c}}(\cH)$-reproducing kernel VH-space on $X$} 
if there exists a Hermitian kernel $\fk\colon X\times X\ra\cL^*_{\mathrm{c}}(\cH)$ 
such that the following axioms are satisfied:
\begin{itemize} 
\item[(rkh1)] $\cR$ is a subspace of $\cH^X$, with all algebraic operations.
\item[(rkh2)] For all $x\in X$ and all $h\in\cH$, 
the $\cH$-valued function $\fk_x h=\fk(\cdot,x)h\in\cR$.
\item[(rkh3)] For all $f\in\cR$ we have $[f(x),h]_\cH=[f,\fk_x h]_\cR$, for all 
$x\in X$ and $h\in \cH$.
\item[(rkh4)] For all $x\in X$ the evaluation operator $\cR\ni f\mapsto f(x)\in
\cH$ is continuous.
\end{itemize}

In this operator valued setting, let us note the appearance of the axiom (rkh4) which makes a
difference with classical cases, see \cite{AyGheondea2} for some results pointing out its 
significance.

\begin{theorem}[Theorem 2.10 in \cite{AyGheondea2}] \label{t:vhinvkolmo2} 
Let $\Gamma$ be a $*$-semigroup that acts 
on the nonempty set $X$ and let $\fl\colon X\times X\ra\cL_{\mathrm{c}}^*(\cH)$ 
be a kernel, 
for some VH-space $\cH$ over an admissible space $Z$. 
Then the following assertions are equivalent:

\begin{itemize}
\item[(1)] $\fl$ has the following properties:
\begin{itemize}
\item[(a)] $\fl$ is positive semidefinite, in the sense of \eqref{e:hpsd}, 
and invariant under the action of $\Gamma$ on $X$, that is, 
\eqref{e:invariant} holds.
\item[(b)] For any $\xi \in \Gamma$ and 
any seminorm $p \in S(Z)$, there exists a seminorm 
$q \in S(Z)$ and 
a constant $c_p(\xi)\geq 0$ such that for all $n \in \NN$,
$\{h_i \}_{i=1}^{n} \in \cH$, $\{ x_i \}_{i=1}^{n} \in X$ we have
\begin{equation*}
p ( \sum_{i,j=1}^{n} [ \fl(\xi\cdot x_i, \xi\cdot x_j)h_j, h_i ]_{\cH} )
\leq c_p(\xi)\,  q( \sum_{i,j=1}^{n} 
[ \fl(x_i, x_j)h_j, h_i ]_{\cH} ). 
\end{equation*} 
\item[(c)] For any $x \in X$ and any seminorm $p \in S(Z)$, 
 there exists a seminorm $q \in S(Z)$ and 
a constant $c_p(x)\geq 0$ such that for all 
$n \in \NN$, $\{ y_i \}_{i=1}^{n} \in X$, 
$\{ h_i \}_{i=1}^{n} \in \cH$ we have
\begin{equation*}
p ( \sum_{i,j=1}^{n} [ \fl(x,y_i)h_i, \fl(x, y_j)h_j ]_{\cH} )
\leq c_p(x)\,  q ( \sum_{i,j=1}^{n} 
[ \fl(y_j, y_i)h_i, h_j ]_{\cH} ). 
\end{equation*}
\end{itemize}
\item[(2)] $\fl$ has a $\Gamma$-invariant $\cL^*_{\mathrm{c}}(\cH)$-valued VH-space
linearisation $(\cK;\pi;V)$.
\item[(3)] $\fl$ admits an $\cL^*_{\mathrm{c}}(\cH)$-reproducing kernel VH-space $\cR$ and 
there exists a $*$-representation $\rho\colon \Gamma\ra\cL_{\mathrm{c}}^*(\cR)$ such that 
$\rho(\xi)\fl_xh=\fl_{\xi\cdot x}h$ for all $\xi\in\Gamma$, $x\in X$, $h\in\cH$.
\end{itemize}

In addition, in case any of the assertions \emph{(1)}, \emph{(2)}, or
\emph{(3)} holds,  
then a minimal\, $\Gamma$-invariant $\cL^*_{\mathrm{c}}(\cH)$-valued VH-space linearisation of $\fl$ can be 
constructed, and the pair $(\cR;\rho)$ as in assertion \emph{(3)} can be chosen with $\cR$ 
topologically minimal as well.
\end{theorem}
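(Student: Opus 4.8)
The strategy is to obtain Theorem~\ref{t:vhinvkolmo2} from Theorem~\ref{t:vhinvkolmow} by the same amplification device employed in the proof of Theorem~\ref{t:vevergh}: the operator-valued kernel $\fl$ on $X$ is encoded as a $Z$-valued kernel $\fk$ on the enlarged index set $X\times\cH$, the abstract theorem is applied to $\fk$, and the resulting linearisation is then disassembled into operator form. Concretely, I would set
\[ \fk((x,h),(y,g)) := [\fl(y,x)h,g]_\cH, \qquad x,y\in X,\ h,g\in\cH, \]
and let $\Gamma$ act on $X\times\cH$ by $\xi\cdot(x,h)=(\xi\cdot x,h)$. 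The point to keep in view is that $\fk$ is linear in each $\cH$-slot, so sums carrying scalar coefficients $t_j$ may always be rewritten by absorbing $t_j$ into $h_j$.

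For (1)$\Ra$(2), I would first verify the three hypotheses of Theorem~\ref{t:vhinvkolmow} for $\fk$. Weak positive semidefiniteness and $\Gamma$-invariance of $\fk$ are checked exactly as in the proof of Theorem~\ref{t:vevergh}, using positive semidefiniteness and invariance of $\fl$. The genuinely new requirement is the weak boundedness condition 1.(c) of Theorem~\ref{t:vhinvkolmow}; after absorbing the scalars into the $\cH$-entries, this inequality is precisely hypothesis (b) of the present theorem, with $c(\xi)=c_p(\xi)^{1/2}$. Theorem~\ref{t:vhinvkolmow} then furnishes a topologically minimal continuously adjointable invariant weak VH-space linearisation $(\cK;\pi;V)$ of $\fk$. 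By the explicit construction (see \eqref{e:vexa}) $V(x,h)$ is linear in $h$, so $\tilde V(x)h:=V(x,h)$ defines $\tilde V(x)\colon\cH\ra\cK$; its continuity follows from the surrogate Schwarz inequality (Lemma~\ref{l:schwarz}) together with the continuity of $\fl(x,x)\in\cL^*_{\mathrm{c}}(\cH)$, since $[\tilde V(x)h,\tilde V(x)h]_\cK=[\fl(x,x)h,h]_\cH$. The crucial step is continuous adjointability of $\tilde V(x)$: for $v=\sum_i\tilde V(y_i)h_i$ in the dense span of $V(X\times\cH)$ one has $\tilde V(x)^*v=\sum_i\fl(x,y_i)h_i$, and hypothesis (c) reads exactly as the estimate $p([\tilde V(x)^*v,\tilde V(x)^*v]_\cH)\le c_p(x)\,q([v,v]_\cK)$, which is continuity of $\tilde V(x)^*$ on a dense subspace and hence yields $\tilde V(x)\in\cL^*_{\mathrm{c}}(\cH,\cK)$. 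Finally $\fl(x,y)=\tilde V(x)^*\tilde V(y)$ and $\pi(\xi)\tilde V(x)=\tilde V(\xi\cdot x)$ follow from the defining identity $[V(x,h),V(y,g)]_\cK=[\fl(y,x)h,g]_\cH$ and from (ivhl3), and topological minimality is inherited, so that $(\cK;\pi;\tilde V)$ is the desired linearisation.

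For (2)$\Ra$(3) I would again route through the $Z$-valued reproducing kernel VH-space $\cR$ of $\fk$ produced by Theorem~\ref{t:vhinvkolmow}, and identify it with an $\cL^*_{\mathrm{c}}(\cH)$-reproducing kernel VH-space $\tilde\cR$ of $\fl$ through the correspondence $f\mapsto\tilde f$, where $\tilde f(x)\in\cH$ is the unique vector with $f(x,h)=[h,\tilde f(x)]_\cH$. Existence of $\tilde f(x)=\tilde V(x)^*f$ rests on the continuous adjointability just obtained, and the new axiom (rkh4), namely continuity of the evaluation $f\mapsto f(x)=\tilde f(x)$, is once more exactly hypothesis (c). Transporting the gramian makes $f\mapsto\tilde f$ a unitary $U$, and $\tilde\rho:=U\rho U^{*}$ supplies the representation. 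The implication (3)$\Ra$(1) is the reverse computation as in Theorem~\ref{t:vevergh}: positive semidefiniteness and invariance are read off the reproducing identities, condition (b) is recovered from continuity of $\rho(\xi)$, and condition (c) from continuity of the evaluation maps.

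The main obstacle is precisely the topological anomaly flagged in the introduction. In the purely algebraic Theorem~\ref{t:vevergh} adjointability of $\tilde V(x)$ is automatic from minimality, whereas here continuity of the adjoint $\tilde V(x)^*$ — equivalently axiom (rkh4) — is not free, and the entire purpose of hypothesis (c) is to guarantee it. The delicate part is the bookkeeping: one must see that (c) is consumed exactly once, in establishing $\tilde V(x)\in\cL^*_{\mathrm{c}}(\cH,\cK)$ and the continuity of the evaluation operator, while (b) is consumed separately in producing condition 1.(c) of Theorem~\ref{t:vhinvkolmow}, so that no hypothesis is either wasted or silently needed twice.
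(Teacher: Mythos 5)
Your proposal is correct and follows essentially the same route as the paper: both encode $\fl$ as the scalar-amplified kernel $\fk((x,h),(y,g))=[\fl(y,x)h,g]_\cH$ on $X\times\cH$, feed hypothesis (b) into the seminorm boundedness condition 1.(c) of the abstract continuously adjointable theorem, and spend hypothesis (c) exactly once on the continuity of $\tilde V(x)^*$ on the dense span (equivalently, on axiom (rkh4)). The only cosmetic discrepancy is that the paper's text cites Theorem~\ref{t:vhinvkolmo} at the point where the seminorm condition of Theorem~\ref{t:vhinvkolmow} is actually being verified and used, so your explicit appeal to Theorem~\ref{t:vhinvkolmow} matches the paper's actual computations.
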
 

\begin{proof} 
(1)$\Ra$(2). Define the kernel $\fk\colon (X\times\cH)\times(X\times\cH)\ra Z$ by 
\begin{equation*}
\fk((x,h),(y,g)):=[\fl(y,x)h,g]_{\cH},\quad x,y\in X,\ h,g\in\cH.
\end{equation*}
As in the proof of the implication (1)$\Ra$(2) of Theorem \ref{t:vhvergh},
$\fk$ is weakly positive semidefinite
and invariant under the action of $\Gamma$ on $X\times\cH$ given by 
$\xi\cdot(x,h)=(\xi\cdot x,h)$ for all $\xi\in\Gamma$, $x\in X$, $h\in\cH$ 
In order to see that this kernel satisfies the property 1.(c) of Theorem \ref{t:vhinvkolmo}, observe 
that for all $n\in\NN$, $\{t_i\}_{i=1}^{n}\subset\CC$, 
$\alpha\in\Gamma$, and $p\in S(Z)$, by assumption, see property 1.(b), 
there exists $q\in S(Z)$ and $c(\alpha)\geq 0$, we have   
\begin{align*}
p(\sum_{j,k=1}^n t_j \ol{t_k} \fk(\alpha\cdot(x_k,h_k),\alpha(x_j,h_j)))
&=p(\sum_{j,k=1}^n t_j\ol{t_k}[\fl(\alpha\cdot x_j,\alpha\cdot x_k)h_k,h_j]) \\
&=p(\sum_{j,k=1}^n [\fl(\alpha\cdot x_j,\alpha\cdot x_k)t_k h_k,t_j h_j]) \\
&\leq c(\alpha)^2q(\sum_{j,k=1}^n [\fl(x_j,x_k)t_k h_k,t_j h_j]) \\
&= c(\alpha)^2q(\sum_{j,k=1}^n t_j \ol{t_k} \fk((x_k,h_k),(x_j,h_j))).
\end{align*} 

By Theorem \ref{t:vhinvkolmo}, there exists a minimal weak VH-space linearisation 
$(\cK;\pi;V)$ of $\fk$ and the action of $\Gamma$ on $(X\times\cH)$. Same 
arguments as in the proof of the implication (1)$\Ra$(2) of Theorem \ref{t:vhvergh} show that, for
any $x\in X$, there exists an adjointable operator of VE-spaces 
$\tilde V(x)\colon\cH\ra\cK_0$, given by $\tilde V(x)h\colon =V(x,h)$ for $x\in X$ and $h\in\cH$, 
where $\cK_0\colon = \lin V(X)\cH$, with the property that 
$\tilde V(x)^*\tilde V(y)=\fl(x,y)$ for all $x,y\in X$. Arguing as in the 
proof of the implication (1)$\Ra$(2) of Theorem 2.10 of \cite{AyGheondea2}, it follows that
$\tilde V(x)\in\cL_{\mathrm{c}}^*(\cH,\cK_0)$. Now using the boundedness condition 
(c), for any 
$p \in S(Z)$ there exist $q\in S(Z)$ and $c_p(x)\geq 0$ such that, for all
$\sum_{i=1}^n V(y_i)h_i\in \cK_0$ we have 
\begin{align*}
 p ( [ V(x)^{*}(\sum_{i=1}^n V(y_i)h_i), V(x)^{*}(\sum_{i=1}^n V(y_i)h_i) ]_{\cH} ) & =
p ( [ \sum_{i=1}^n \fl(x,y_i)h_i, \sum_{i=1}^n \fl(x,y_i)h_i ]_{\cH} )\\
& \leq c_p(x) q ( \sum_{i,j=1}^{n} 
[ \fl(y_j, y_i)h_i, h_j ]_{\cH} ) \\
& = c_p(x)\, q ( 
[ \sum_{i=1}^n V(y_i)h_i, \sum_{i=1}^n V(y_i)h_i ]_{\cK_0} ) 
\end{align*}
hence $\tilde V(x)^*\in\cL_{\mathrm{c}}^*(\cK_0,\cH)$ for any $x\in X$. Consequently,
$\tilde V(x)^*$ extends uniquely to 
an operator $\tilde V(x)^*\in\cL^*_{\mathrm{c}}(\cK,\cH)$ for each $x\in X$. It 
follows that $(\cK;\pi;\tilde V)$ is an invariant $\cL^*_{\mathrm{c}}(\cH)$-valued VH-space 
linearisation of the kernel $\fl$ and the action of $\Gamma$ on $X$. 

(2)$\Ra$(3). Let $(\tl{\cK};\tl{\pi};\tl{V})$ be an invariant $\cL^*_{\mathrm{c}}(\cH)$-valued 
VH-space linearisation of the kernel $\fl$. In order to avoid repetition, we use some facts
obtained during the proof of the implication (2)$\Ra$(3) of Theorem~\ref{t:vevergh}.
Define $V\colon(X\times\cH)\ra\tl{\cK}$ by 
$V(x,h)=\tl{V}(x)h$ for all $x\in X$ and $h\in\cH$. Letting 
$\tl{\cK}_0=\overline{\lin V(X,\cH)}\subseteq \tl{\cK}$, similarly we see that
 $(\overline{\lin V(X,\cH)};\tl{\pi}_0;V)$ is a topological minimal invariant weak VH-space 
linearisation for the kernel $\fk\colon(X\times\cH)\times(X\times\cH)\ra Z$ 
defined by $\fk((x,h),(y,g))=[V(x,h),V(y,g)]$ for all $x,y\in X$ and $h,g\in\cH$ 
and the action of $\Gamma$ on $(X\times\cH)$ defined by 
$\xi\cdot(x,h)=(\xi\cdot x,h)$ for all $\xi\in\Gamma$, $x\in X$ and $h\in\cH$.

By Theorem \ref{t:vhinvkolmo} there exists a topologically minimal 
weak $Z$-reproducing kernel VH-space 
$\cR$ and a $*$-representation $\rho\colon\Gamma\ra\cL^*_{\mathrm{c}}(\cR)$ such that 
$\rho(\xi)\fk_{(x,h)}=\fk_{\xi\cdot(x,h)}$ 
for all $\xi\in\Gamma$, $x\in X$, $h\in\cH$. The rest of the proof is similar with the end of
the proof of the implication (2)$\Ra$(3) as in Theorem~\ref{t:vevergh}. We show that,
for each $f\in\cR$ and $x\in X$ there exists a unique element $\tl{f}(x)$ such that \eqref{e:fexah} 
holds and, consequently, this gives rise to a map $\cR\ni f\mapsto Uf=\tl{f}\in \cH^X$, which is
linear and bijective between $\cR$ and its range $\tl{\cR}\subseteq \cH^X$. 
Letting $[\tl{f},\tl{g}]_{\tl{\cR}}=[f,g]_\cR$ for all $f,g\in\cR$, $\tl{\cR}$ becomes an $\cH$-valued
reproducing kernel VH-space with kernel $\fl$, and then letting
$\tl{\rho}\colon = U\rho U^*$, $(\tl{\cR},\tl{\rho})$ is a pair having all the required properties.

(3)$\Ra$(1). Assume that the pair $(\tl{\cR};\tl{\rho})$ consists of an
$\cL^*_{\mathrm{c}}(\cH)$-valued reproducing kernel VH-space of $\fl$ and a $*$-representation 
$\tl{\rho}$ of $\Gamma$ on $\cL_{\mathrm{c}}^*(\tl{\cR})$ such that 
$\rho(\xi)\fl_{x}h=\fl_{\xi\cdot x}h$ for all $\xi\in\Gamma$, 
$x\in X$, $h\in\cH$. 
Similarly as in the proof of the implication (3)$\Ra$(1) 
of Theorem \ref{t:vevergh}, the kernel $\fl$ is 
shown to be positive semidefinite and invariant under 
the action of $\Gamma$ on $X$. 
On the other hand, the inequalities (b) and (c) are obtained from the continuity of the operator
$\rho(\xi)\colon\cR\ra\cR$, for any $\xi\in\Gamma$ and, respectively, from the continuity of the
evaluation operator $E_x\colon\cR\ra\cH$, for any $x\in X$. 
\end{proof}

\subsection{Invariant Kernels with Values Boundedly Adjointable Operators.}
We show that Theorem 4.2 in \cite{Gheondea} is a special case of Theorem \ref{t:vhinvkolmo}. 
We review necessary definitions in \cite{Gheondea}. 

Given a $\cB^*(\cH)$-valued kernel $\fl$ on a nonempty set $X$, where 
$\cH$ is a VH-space over the admissible space $Z$, a \emph{$\cB^*(\cH)$-valued VH-space 
linearisation} of $\fl$ is a pair $(\tl{\cK};\tl{V})$ with 
\begin{itemize}
\item[(hvhl1)] $\tl{\cK}$ is a VH-space over $Z$. 
\item[(hvhl2)] $\tl{V}\colon X\ra\cB^*(\cH;\cK)$ satisfies 
$\fl(x,y)=\tl{V}(x)^*\tl{V}(y)$ for all $x,y\in X$.
\end{itemize}

If $\Gamma$ is a $*$-semigroup acting on $X$, $(\tl{\cK};\tl{\pi};\tl{V})$ is 
called an \emph{invariant $\cB^*(\cH)$-valued VH-space linearisation} 
of the kernel $\fl$ and the action 
of $\Gamma$ on $X$, if, in addition to (hvhl1) and (hvhl2), we have,
\begin{itemize}
\item[(hvhl3)] $\tl{\pi}\colon \Gamma \ra \cB^*(\tl{\cK})$ is a $*$-representation.
\item[(hvhl4)] $\tl{V}(\xi\cdot x)=\tl{\pi}(\xi)V(x)$ for every $\xi\in\Gamma$, $x\in X$.
\end{itemize}
If we have 
\begin{itemize}
\item[(hvhl5)] $\mathrm{Lin} \tl{V}(X)\cH$ is dense in $\tl{\cK}$,
\end{itemize}
then $(\tl{\cK};\tl{\pi};\tl{V})$ is called \emph{topologically minimal}.

Given
a nonempty set $X$ and a VH-space $\cH$ over the 
admissible space $Z$, a VH-space $\tl{\cR}$ over $Z$ is 
called a \emph{$\cB^*(\cH)$-valued reproducing kernel VH-space} on $X$ if there exists a 
kernel $\fl\colon X\times X\ra \cB^*(\cH)$ such that 
\begin{itemize}
\item[(hrk1)] $\tl{\cR}$ is a subspace of $\cH^X$ with all 
algebraic operations.
\item[(hrk2)] $\fl_{x}h=\fl(\cdot,x)h\in\tl{\cR}$ for all $x\in X$, $h\in\cH$. 
\item[(hrk3)] $[f(x),h]_{\cH}=[f,\fl_xh]_{\tl{\cR}}$ holds for all $f\in\tl{\cR}$, $x\in X$ and $h\in\cH$. 
\end{itemize}
$\tilde\cR$ is  called \emph{topologically minimal} if
\begin{itemize}
\item[(hrk4)] $\mathrm{Lin} \{ \fl_xh \mid x\in X, h\in \cH \}$ is dense in $\tl{\cR}$,
\end{itemize}
and, in this case, $\tilde\cR$ is uniquely determined by the kernel $\fl$. 

\begin{theorem}[Theorem 4.2 in \cite{Gheondea}] \label{t:vhvergh}
Let $\Gamma$ be a $*$-semigroup acting on a nonempty set $X$, 
$\cH$ be a VH-space on an 
admissible space $Z$, 
and $\fl:X \times X \ra \cB^*(\cH)$ be
a kernel. Then the following are equivalent:
\begin{itemize}
\item[(1)] $\fl$ has the following properties:
\begin{itemize}
\item[(a)] $\fl$ is positive semidefinite.
\item[(b)] $\fl$ is invariant under the action of $\Gamma$ on $X$. 
\item[(c)] For any $\alpha \in \Gamma$ there exists
$c(\alpha) \geq 0$ such that, for all $n \in \NN$, $x_1,x_2,\cdots x_n \in X$,
$h_1,h_2,\cdots,h_n \in \cH$, we  have
\begin{equation} \label{e:tecbc}
\sum_{i,j=1}^{n} [ \fl(\alpha\cdot x_i,\alpha\cdot x_j)h_j,h_i ]_{\cH}
\leq c(\alpha)^{2}[\fl(x_i,x_j)h_j,h_i]_{\cH}.
\end{equation}

\end{itemize}
\item[(2)] $\fl$ has an invariant $\cB^*(\cH)$-valued VH-space linearisation 
$(\tl{\cE};\tl{\pi};\tl{V})$.
\item[(3)] $\fl$ admits a $\cB^*(\cH)$-reproducing kernel 
VH-space $\tl{\cR}$ and there exists a
$*$-representation $\tl{\rho}: \Gamma \ra \cB^*(\tl{\cR})$ such that
$\tl{\rho}(\xi) \fl_x h = \fl_{\xi\cdot x}h$ 
for all $\xi \in \Gamma$, $x \in X$, $h \in \cH$.
\end{itemize}
Moreover, in case any of the assertions (1), (2) or (3)
holds, a topologically minimal invariant $\cB^*(\cH)$-valued VH-space linearisation 
can be constructed.
\end{theorem}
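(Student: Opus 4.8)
The plan is to deduce this theorem from the scalar-valued result Theorem~\ref{t:vhinvkolmo} by the same scalarisation device used in Theorem~\ref{t:vevergh} and Theorem~\ref{t:vhinvkolmo2}: to the $\cB^*(\cH)$-valued kernel $\fl$ I associate the $Z$-valued kernel $\fk$ on $X\times\cH$ given by $\fk((x,h),(y,g))=[\fl(y,x)h,g]_\cH$, together with the action $\xi\cdot(x,h)=(\xi\cdot x,h)$. First I would verify the three hypotheses of Theorem~\ref{t:vhinvkolmo}.(1) for $\fk$: weak positive semidefiniteness and $\Gamma$-invariance are immediate from \eqref{e:hpsd} and the invariance of $\fl$, exactly as in Theorem~\ref{t:vevergh}, while the boundedness condition 1.(c) of Theorem~\ref{t:vhinvkolmo} follows by absorbing the scalars $t_k$ into the vectors $h_k$ and applying the present condition (c) to the family $\{t_kh_k\}$.

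For (1)$\Ra$(2) I would then invoke Theorem~\ref{t:vhinvkolmo} to obtain a topologically minimal boundedly adjointable invariant weak VH-space linearisation $(\cK;\pi;V)$ of $\fk$; since $V(x,h)$ is linear in $h$ by the explicit form \eqref{e:vexa}, the formula $\tl V(x)h=V(x,h)$ defines a linear operator $\tl V(x)\colon\cH\ra\cK$ with $[\tl V(x)h,\tl V(y)g]_\cK=[\fl(y,x)h,g]_\cH$, whence $\tl V(y)^*\tl V(x)=\fl(y,x)$ once adjointability is known, and $\pi(\xi)\tl V(x)=\tl V(\xi\cdot x)$ from the relation $V(\xi\cdot x,h)=\pi(\xi)V(x,h)$. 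The essential point, and the main obstacle, is to prove that each $\tl V(x)$ belongs to $\cB^*(\cH,\cK)$, i.e.\ is bounded \emph{with bounded adjoint}; this is precisely where the missing analogue of condition (c) of Theorem~\ref{t:vhinvkolmo2} must be recovered for free. Boundedness of $\tl V(x)$ is easy: $[\tl V(x)h,\tl V(x)h]_\cK=[\fl(x,x)h,h]_\cH$, and since $\cB^*(\cH)$ is a $C^*$-algebra and $\fl(x,x)\geq 0$ one has $\fl(x,x)\leq\|\fl(x,x)\|I$, so $\|\tl V(x)\|\leq\|\fl(x,x)\|^{1/2}$.

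The boundedness of the adjoint is where I expect the real work. On the dense subspace $\cK_0=\lin V(X\times\cH)$ the candidate adjoint $\tl V(x)^\sharp(\sum_i V(y_i,g_i))=\sum_i\fl(x,y_i)g_i$ is well defined and linear by the nondegeneracy of $[\cdot,\cdot]_\cH$ (as in Theorem~\ref{t:vevergh}), and satisfies $[\tl V(x)h,w]_\cK=[h,\tl V(x)^\sharp w]_\cH$ for $w\in\cK_0$. Writing $u=\tl V(x)^\sharp w$ and applying the surrogate Schwarz inequality of Lemma~\ref{l:schwarz} to $[u,u]_\cH=[\tl V(x)u,w]_\cK$, together with the order bound $[\tl V(x)u,\tl V(x)u]_\cK\leq\|\tl V(x)\|^2[u,u]_\cH$, I would obtain $p([u,u]_\cH)\leq 16\|\tl V(x)\|^2 p([w,w]_\cK)$ for every $p\in S(Z)$; since $Z_+$ is closed (axiom (a4)) and $Z$ is locally convex, a separation argument upgrades this family of seminorm estimates to the order inequality $[\tl V(x)^\sharp w,\tl V(x)^\sharp w]_\cH\leq 16\|\tl V(x)\|^2[w,w]_\cK$. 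Hence $\tl V(x)^\sharp$ is bounded, extends by continuity to $\cK$, and is the adjoint of $\tl V(x)$, so $\tl V(x)\in\cB^*(\cH,\cK)$ and $(\cK;\pi;\tl V)$ is the desired topologically minimal linearisation. It is exactly this step — order boundedness of $\tl V(x)$ feeding through Lemma~\ref{l:schwarz} — that explains why no separate hypothesis of type (c) of Theorem~\ref{t:vhinvkolmo2} is needed here.

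The remaining implications are routine adaptations of Theorem~\ref{t:vevergh}. For (2)$\Ra$(3) I would set $V(x,h)=\tl V(x)h$, recognise $(\ol{\lin V(X\times\cH)};\pi;V)$ as a topologically minimal invariant weak VH-space linearisation of $\fk$, apply the reproducing-kernel part of Theorem~\ref{t:vhinvkolmo} to get $(\cR;\rho)$, and transport $\cR\subseteq Z^{X\times\cH}$ to an $\cH$-valued reproducing kernel VH-space $\tl\cR\subseteq\cH^X$ via $f\mapsto\tl f$, where $f(x,h)=[h,\tl f(x)]_\cH$ and $\tl f(x)=\tl V(x)^*f$ under the identification of $\cR$ with $\ol{\lin V(X\times\cH)}$ (here boundedness of $\tl V(x)^*$, just established, also yields continuity of the evaluation maps); putting $\tl\rho=U\rho U^*$ finishes the step. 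For (3)$\Ra$(1), positive semidefiniteness and invariance follow from (hrk3) and the intertwining $\tl\rho(\xi)\fl_xh=\fl_{\xi\cdot x}h$ exactly as in Theorem~\ref{t:vevergh}, while condition (c) is read off from $\sum_{i,j}[\fl(\alpha\cdot x_i,\alpha\cdot x_j)h_j,h_i]_\cH=[\tl\rho(\alpha)\sum_j\fl_{x_j}h_j,\tl\rho(\alpha)\sum_i\fl_{x_i}h_i]_{\tl\cR}\leq\|\tl\rho(\alpha)\|^2\sum_{i,j}[\fl(x_i,x_j)h_j,h_i]_\cH$, giving $c(\alpha)=\|\tl\rho(\alpha)\|$.
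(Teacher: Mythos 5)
Your overall route is the paper's: scalarise $\fl$ to the $Z$-valued kernel $\fk((x,h),(y,g))=[\fl(y,x)h,g]_\cH$ on $X\times\cH$ with the action $\xi\cdot(x,h)=(\xi\cdot x,h)$, feed it to Theorem~\ref{t:vhinvkolmo}, and transport back; your verifications of weak positive semidefiniteness, invariance, condition 1.(c) of Theorem~\ref{t:vhinvkolmo}, the implication (2)$\Ra$(3) via $f\mapsto\tl f$, and (3)$\Ra$(1) all match the paper's proof. The one place you depart is that you try to prove explicitly that $\tl V(x)\in\cB^*(\cH,\cK)$, whereas the paper defers exactly this point to Theorem~3.3 of \cite{Gheondea}. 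Your bound $\|\tl V(x)\|\leq\|\fl(x,x)\|^{1/2}$ for $\tl V(x)$ itself is fine.

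The gap is in your treatment of the adjoint. From Lemma~\ref{l:schwarz} you correctly derive $p([u,u]_\cH)\leq 16\|\tl V(x)\|^2 p([w,w]_\cK)$ for every $p\in S(Z)$, where $u=\tl V(x)^\sharp w$, but the claimed upgrade to the \emph{order} inequality $[u,u]_\cH\leq 16\|\tl V(x)\|^2[w,w]_\cK$ by a separation argument is false. For positive elements $a,b$ of an admissible space, $p(a)\leq Cp(b)$ for all increasing continuous seminorms $p$ does not imply $a\leq Cb$ (already in $Z=\cB(\CC^2)$ with the operator norm, $a=\mathrm{diag}(1,0)$ and $b=\mathrm{diag}(0,1)$ satisfy $\|a\|\leq\|b\|$ but $a\not\leq b$); Hahn--Banach separation from the closed cone $Z_+$ would require estimates against positive continuous linear \emph{functionals}, which is not what the Schwarz surrogate gives you. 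What your seminorm estimate actually proves is only that $\tl V(x)^\sharp$ is continuous, i.e.\ $\tl V(x)\in\cL^*_{\mathrm{c}}(\cH,\cK)$, which lands you in the continuously adjointable setting of Theorem~\ref{t:vhinvkolmo2} rather than the $\cB^*$ setting required here. The correct argument is purely order-theoretic: a bounded adjointable operator between VE-spaces has a bounded adjoint of the same operator norm (this is the fact from Loynes' operator theory underlying the pre-$C^*$ identity $\|A^*A\|=\|A\|^2$ for $\cB^*(\cE)$, proved by iterating the order inequality $[Sf,Sf]\leq\|T\|^2[Sf,f]$ for the positive operator $S=\tl V(x)\tl V(x)^\sharp$, not by passing through seminorms); applied to the bounded adjointable $\tl V(x)\colon\cH\ra\cK_0$ this yields $\tl V(x)^\sharp\in\cB(\cK_0,\cH)$ directly, which is the content of the cited Theorem~3.3 of \cite{Gheondea}. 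Your concluding remark that this step explains why no analogue of condition (c) of Theorem~\ref{t:vhinvkolmo2} is needed is the right intuition, but the mechanism is the order-theoretic isometry of the involution, not a seminorm-to-order separation.
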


\begin{proof}
(1)$\Ra$(2). Define the kernel $\fk\colon (X\times\cH)\times(X\times\cH)\ra Z$ by 
\begin{equation*}
\fk((x,h),(y,g)):=[\fl(y,x)h,g]_{\cH}
\end{equation*}
for all $x,y\in X$ and $h,g\in\cH$. Then $\fk$ is weakly positive semidefinite 
and invariant under the action of $\Gamma$ on $X\times\cH$ given by 
$\xi\cdot(x,h)=(\xi\cdot x,h)$ for all $\xi\in\Gamma$, $x\in X$, $h\in\cH$, 
as in the proof of (1)$\Ra$(2) of Theorem \ref{t:vevergh}. To see that this kernel 
satisfies condition 1.(c) of Theorem \ref{t:vhinvkolmo}, we use the assumption (c) and get
that, for any $\alpha\in\Gamma$ there exists $c(\alpha)\geq 0$ such that, 
for all $n\in\NN$, $\{t_i\}_{i=1}^{n}\subset\CC$, we have
\begin{align*}
\sum_{j,k=1}^n t_j \ol{t_k} \fk(\alpha\cdot(x_k,h_k),\alpha(x_j,h_j))
&=\sum_{j,k=1}^n t_j\ol{t_k}[\fl(\alpha\cdot x_j,\alpha\cdot x_k)h_k,h_j] \\
&=\sum_{j,k=1}^n [\fl(\alpha\cdot x_j,\alpha\cdot x_k)t_k h_k,t_j h_j] \\
&\leq c(\alpha)^2\sum_{j,k=1}^n [\fl(x_j,x_k)t_k h_k,t_j h_j] \\
&= c(\alpha)^2\sum_{j,k=1}^n t_j \ol{t_k} \fk((x_k,h_k),(x_j,h_j)).
\end{align*}

By Theorem \ref{t:vhinvkolmo}, there exists a minimal weak VH-space linearisation 
$(\cK;V;\pi)$ of $\fk$ and the action of $\Gamma$ on $(X\times\cH)$. Same 
arguments as in proof of (1)$\Ra$(2) of Theorem \ref{t:vevergh} gives 
an adjointable operator of VE-spaces 
$\tilde V(x)\colon\cH\ra\cK_0$, given by $\tilde V(x)h\colon =V(x,h)$ for $x\in X$ and $h\in\cH$, 
where $\cK_0\colon = \mathrm{Lin}V(X)\cH$, with the property that 
$\tilde V(x)^*\tilde V(y)=\fl(x,y)$ for all $x,y\in X$. Arguing as in the proof 
of Theorem 3.3 of \cite{Gheondea}, it follows that 
$\tilde V(x)\in\cB^*(\cH,\cK_0)$ and $\tilde V(x)^*\in\cB^*(\cK_0,\cH)$. Hence 
$\tilde V(x)^*$ extends uniquely to 
an operator $V(x)^*\in\cB^*(\cK,\cH)$ for each $x\in X$. It 
follows that $(\cK;\pi;\tilde V)$ is a topologically minimal invariant $\cB^*(\cH)$-valued VH-space 
linearisation of the kernel $\fl$ and the action of $\Gamma$ on $X$. 

(2)$\Ra$(3). Let $(\tl{\cK};\tl{\pi};\tl{V})$ be an invariant $\cB^*(\cH)$ 
VH-space linearisation of the kernel $\fl$. We essentially use Theorem~\ref{t:vhinvkolmo}
with details very close to the proof of the implication (2)$\Ra$(3) of Theorem~\ref{t:vhinvkolmo2},
with the difference that we obtain bounded adjointable operators instead of continuously 
adjointable operators.
Define $V\colon(X\times\cH)\ra\tl{\cK}$ by 
$V(x,h)=\tl{V}(x)h$ for all $x\in X$ and $h\in\cH$. We also 
have $\tl{\pi}(\xi)\tl{V}(x)h=\tl{V}(\xi\cdot x)h=V(\xi\cdot x,h)$ 
for all $\xi\in\Gamma$, $x\in X$, $h\in\cH$. Then 
$(\ol{\lin(V(X,\cH)};V;\tl{\pi})$ is a topologically minimal weak invariant VH-space 
linearisation for the kernel $\fk\colon(X\times\cH)\times(X\times\cH)\ra Z$ 
defined by $\fk((x,h),(y,g))=[V(x,h),V(y,g)]$, for all $x,y\in X$ and $h,g\in\cH$,
and the action of $\Gamma$ on $(X\times\cH)$ defined by 
$\xi\cdot(x,h)=(\xi\cdot x,h)$, for all $\xi\in\Gamma$, $x\in X$, and $h\in\cH$. 

By Theorem \ref{t:vhinvkolmo} there exists a weak $Z$-reproducing kernel VH-space 
$\cR$ and a $*$-representation $\rho\colon\Gamma \ra\cB^*(\cR)$ such that 
$\rho(\xi)\fk_{(x,h)}=\fk_{\xi\cdot(x,h)}$ 
for all $\xi\in\Gamma$, $x\in X$, $h\in\cH$. Define $\tl{f}\colon X\ra\cH$ 
as follows: for each $x\in X$ let $\tl{f}(x)\in\cH$ be the unique 
element satisfying 
$[\tl{f}(x),h]_{\cH}=f(x,h)$ for all $h\in\cH$ and let 
$\tl{\cR}$ be the vector space of all $\tl{f}$, when $f\in\cR$. Since we have 
\begin{equation*}
[\fk_{(x,h)},\fk_{(y,g)}]=\fk_{(x,h)}(y,g)=[\widetilde{\fk_{(x,h)}}(y),g],\quad x,y\in X,\ h,g\in\cH,
\end{equation*}
it follows that $\fl_{x}h=\widetilde{\fk_{(x,h)}}\in\tl{\cR}$ for all $x\in X$ and $h\in\cH$. 

It is easy to check that the map 
$U\colon\cR\ni f \ra \tl{f}\in\tl{\cR}$ is linear, one-to-one, and onto. 
Therefore, defining $[\tl{f},\tl{g}]_{\tl{\cR}}:= [f,g]_{\cR}$ 
makes $\tl{\cR}$ a $\cB^*(\cH)$-reproducing kernel VH-space 
with reproducing kernel $\fl$, and $U$ becomes a unitary 
operator of VH-spaces. Defining $\tl{\rho}:= U\rho U^*$, the pair
$(\tl{\cR},\tl{\rho})$ has all the required properties.

(3)$\Ra$(1). Assume that $(\tl{\cR};\tl{\rho})$ is a 
$\cB^*(\cH)$-reproducing kernel VH-space of $\fl$ with a representation 
$\tl{\rho}$ of $\Gamma$ on $\cB^*(\tl{\cR})$ such that 
$\rho(\xi)\fl_{x}h=\fl_{\xi\cdot x}h$ for all $\xi\in\Gamma$, 
$x\in X$, $h\in\cH$. Similarly as in proof 
of the implication (3)$\Ra$(1) of Theorem \ref{t:vevergh}, the kernel $\fl$ is 
shown to be positive semidefinite and invariant under 
the action of $\Gamma$ on $X$. On the other hand, using the fact that the linear operator
$\tilde\rho(\xi)\colon\cH\ra\cH$ is bounded for all $\xi\in\Gamma$, it follows that, for any 
$\xi\in\Gamma$, there exists 
$c(\xi)\geq 0$ such that, for all $\xi\in\Gamma$, $n\in\NN$, $\{ x_i \}_{i=1}^n\in X$ and 
$\{ h_i \}_{i=1}^n\in \cH$, we have
\begin{align*}
\sum_{j,k=1}^n [\tl{\fl}(\xi\cdot x_j,\xi\cdot x_k)h_k,h_j]_{\cH}
&=\sum_{j,k=1}^n [\tl{\fl}_{\xi\cdot x_k}h_k(\xi\cdot x_j),h_j]_{\cH} 
=\sum_{j,k=1}^n [\tl{\fl}_{\xi\cdot x_k}h_k, \tl{\fl}_{\xi\cdot x_j}h_j]_{\tl{\cR}} \\
&=[\tl{\rho}(\sum_{k=1}^n\tl{\fl}_{x_k}h_k), \tl{\rho}(\sum_{j=1}^n\tl{\fl}_{x_j}h_j)]_{\tl{\cR}} 
\leq c(\xi)^2 [\sum_{k=1}^n\tl{\fl}_{x_k}h_k, \sum_{j=1}^n\tl{\fl}_{x_j}h_j]_{\tl{\cR}} \\
&=c(\xi)^2 \sum_{j,k=1}^n [\tl{\fl}(x_j, x_k)h_k,h_j]_{\cH},
\end{align*}
hence $\fl$ has the property (c).
\end{proof}

\subsection{Positive Semidefinite $\cL(\cX,\cX_Z^\prime)$ Valued Maps on $*$-Semigroups.}
In this subsection we obtain stronger versions of Theorem~3.1 and Theorem~4.2 in
\cite{PaterBinzar} as
applications of Theorem~\ref{t:veinvkolmo} and, respectively, of Theorem~\ref{t:vhinvkolmo}.
We first reorganise some definitions from \cite{PaterBinzar} and \cite{GorniakWeron}. 

Let $\cX$ be a vector space, and $Z$ be an ordered  $*$-space. 
By $\cX_Z^{\prime}$ we denote the space of all conjugate linear functions 
from $\cX$ to $Z$  and call it the \emph{algebraic conjugate $Z$-dual space}. Let 
$\cL(\cX,\cX_Z^\prime)$ denote the vector space of all linear operators 
$T\colon \cX\ra \cX_Z^\prime$.
For any VE-space $\cE$ over $Z$ and any linear operator $A\colon \cX\ra \cE$, we define a linear 
operator $A^\prime\colon \cE\ra \cX_Z^\prime$, called the \emph{algebraic $Z$-adjoint operator},
by
\begin{equation}\label{e:zaoa} (A^\prime f)(x)=[Ax, f]_\cE,\quad f\in\cE,\ x\in \cX.
\end{equation}

If $\Gamma$ is a $*$-semigroup, a map $T\colon \Gamma\ra\cL(\cX,\cX_Z^\prime)$ is called
\emph{$\cL(\cX,\cX_Z^\prime)$-valued $n$-positive} if 
\begin{equation}\label{e:psd2}\sum_{i,j=1}(T_{s_i^*s_j}x_j)(x_i)\geq 0_Z
\mbox{ for all }
(s_i)_{i=1}^n\in\Gamma\mbox{ and all } (x_j)_{j=1}^n\in \cX.\end{equation} 
If $T$ is $n$-positive for all $n\in\NN$ then it is called \emph{$\cL(\cX,\cX_Z^\prime)$-valued 
positive semidefinite}.

\begin{remarks}\label{r:wpsd} With notation as before, let 
$T\colon \Gamma\ra\cL(\cX,\cX_Z^\prime)$.

(1) We define a kernel $\fk\colon (\Gamma\times \cX)\times(\Gamma\times \cX) \ra Z$ by
\begin{equation}\label{e:fekasextey}
\fk((s,x),(t,y))=(T_{s^*t}y)x,\quad s,t\in\Gamma,\ x,y\in\cX.
\end{equation}
Then for all $n\in\NN$, all
$\alpha_1,\alpha_2,\dots,\alpha_n\in\CC$, and all $(s_i,x_i)_{i=1}^n\in(S\times X)$ we have
\begin{align*}
\sum_{i,j=1}^n\overline\alpha_i\alpha_j\fk((s_i,x_i),(s_j,x_j))
=\sum_{i,j=1}\overline\alpha_i\alpha_j(T_{s_i^*s_j}x_j)x_i 
=\sum_{i,j=1}(T_{s_i^*s_j}\alpha_jx_j)(\alpha_ix_i).
\end{align*}
This shows that, for $n\in\NN$, the map $T$ is $n$-positive if and only if the kernel $\fk$ is weakly 
$n$-positive. In particular, $T$ is positive semidefinite if and only if the kernel $\fk$ is weakly
positive semidefinite.

(2) Recall that, see \eqref{e:herm}, the kernel $\fk$ is Hermitian if 
$\fk((s,x),(t,y))=\fk((t,y),(s,x))^*$ for all $s,t\in\Gamma$ and all $x,y\in\cX$. 
From \eqref{e:fekasextey} it follows that $\fk$ is Hermitian if and only if
\begin{equation}\label{e:tesety}
(T_{s^*t}y)x=((T_{t^*s}x)y)^*,\quad\mbox{ for all }s,t\in\Gamma,\mbox{ and all }x,y\in\cX.
\end{equation}
Consequently, by Lemma~\ref{l:twopos} it follows that, if $T$ is $2$-positive, then \eqref{e:tesety} 
holds.

In addition, if $\Gamma$ has a unit $e=e^*$, then \eqref{e:fekasextey} is equivalent with
\begin{equation}\label{e:tesy}
(T_{s^*}y)x=((T_{s}x)y)^*,\quad\mbox{ for all }s\in\Gamma,\mbox{ and all }x,y\in\cX.
\end{equation}

(3) We define a left action of $\Gamma$ 
on $(\Gamma\times \cX)$ by 
\begin{equation}\label{e:usex}
u\cdot(s,x)=(us,x),\mbox{ for all }u,s\in\Gamma,\mbox{ and all } 
x\in \cX.\end{equation}
 For all $u\in\Gamma$ and all $(s,x)\in \Gamma\times \cX$
we have 
\begin{equation*}
\fk((s,x),u\cdot(t,y))=(T_{s^*ut}y)x 
=(T_{(u^*s)^*ty})x=\fk(u^*(s,x),(t,y)),
\end{equation*}
hence the kernel $\fk$ is invariant under the left action of $\Gamma$ on $\Gamma\times\cX$
defined as in \eqref{e:usex}.
\end{remarks}

\begin{theorem}\label{t:pateralg} Let $Z$ be an ordered $*$-space, let
$\cX$ be complex vector space with algebraic conjugate $Z$-dual space $\cX_Z^\prime$, 
and consider $T\colon \Gamma\ra \cL(\cX,\cX_Z^\prime)$, for some $*$-semigroup $\Gamma$ with 
unit. The following assertions are equivalent:
\begin{itemize}
\item[(i)] $T$ is positive semidefinite, in the sense of \eqref{e:psd2}.
\item[(ii)] There exist a VE-space $\cE$ over $Z$, a unital 
$*$-representation $\pi\colon\Gamma\ra\cL^*(\cE)$, and an operator $A\in\cL(\cX,\cE)$, such that
\begin{equation}\label{e:tegad} T_t=A^\prime \pi(t) A,\quad t\in\Gamma.
\end{equation}
\end{itemize}
If any of the conditions (i) and (ii) holds, then the VE-space $\cE$ 
can be chosen minimal in the sense that it coincides with the linear span of 
$\pi(\Gamma) A\cX$ and, in this case, it is unique modulo a unitary
equivalence.
\end{theorem}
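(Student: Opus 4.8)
The plan is to transfer everything to the scalar-type kernel $\fk$ on $(\Gamma\times\cX)\times(\Gamma\times\cX)$ introduced in \eqref{e:fekasextey} and then to invoke Theorem~\ref{t:veinvkolmo}. By Remarks~\ref{r:wpsd}.(1), the map $T$ is positive semidefinite in the sense of \eqref{e:psd2} if and only if $\fk$ is weakly positive semidefinite, while by Remarks~\ref{r:wpsd}.(3) the kernel $\fk$ is invariant under the left action of $\Gamma$ on $\Gamma\times\cX$ given by \eqref{e:usex}. Thus conditions (a) and (b) of Theorem~\ref{t:veinvkolmo} hold for $\fk$, and the whole theorem reduces to reading off the factorisation \eqref{e:tegad} from an invariant weak VE-space linearisation of $\fk$.

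For (i)$\Ra$(ii), I would take the minimal invariant weak VE-space linearisation $(\cE;\pi;V)$ of $\fk$ provided by Theorem~\ref{t:veinvkolmo}. The role of the unit $e=e^*$ is that $(s,x)=s\cdot(e,x)$, so by (ivel3) one has $V(s,x)=\pi(s)V(e,x)$ for all $s\in\Gamma$, $x\in\cX$; in particular $\pi(e)V(s,x)=V(e\cdot(s,x))=V(s,x)$, and since $V(\Gamma\times\cX)$ spans $\cE$ this forces $\pi(e)=\mathrm{Id}_\cE$, that is, $\pi$ is unital. Next I would define $A\colon\cX\ra\cE$ by $Ax=V(e,x)$. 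Linearity of $A$ is the one point that genuinely needs an argument: using the defining identity (vel2) of the linearisation, $[V(t,y),V(e,x)]_\cE=\fk((t,y),(e,x))=(T_{t^*}x)(y)$, which is linear in $x$; hence, by linearity of the gramian in its second argument (ve3), $[V(t,y),\,V(e,\alpha x_1+\beta x_2)-\alpha V(e,x_1)-\beta V(e,x_2)]_\cE=0$ for all $(t,y)$, and minimality together with the non-degeneracy (ve1) gives $A(\alpha x_1+\beta x_2)=\alpha Ax_1+\beta Ax_2$. Finally, for $t\in\Gamma$ and $x,x'\in\cX$, the definition \eqref{e:zaoa} of the algebraic $Z$-adjoint together with $\pi(t)Ax=\pi(t)V(e,x)=V(t,x)$ yields
\begin{equation*}
(A^\prime\pi(t)Ax)(x')=[Ax',\pi(t)Ax]_\cE=[V(e,x'),V(t,x)]_\cE=\fk((e,x'),(t,x))=(T_tx)(x'),
\end{equation*}
using $e^*t=t$; since $x'$ is arbitrary this is exactly $T_t=A^\prime\pi(t)A$.

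For (ii)$\Ra$(i), I would substitute \eqref{e:tegad} into \eqref{e:psd2} and use that $\pi$ is a $*$-representation, so that $\pi(s_i^*s_j)=\pi(s_i)^*\pi(s_j)$, together with \eqref{e:zaoa} and \eqref{e:adj}, to obtain
\begin{equation*}
\sum_{i,j=1}^n(T_{s_i^*s_j}x_j)(x_i)=\sum_{i,j=1}^n[\pi(s_i)Ax_i,\pi(s_j)Ax_j]_\cE=\Bigl[\sum_{i=1}^n\pi(s_i)Ax_i,\sum_{j=1}^n\pi(s_j)Ax_j\Bigr]_\cE\geq 0
\end{equation*}
by (ve1). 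For the minimality and uniqueness clause, note that the linearisation produced above satisfies $\lin V(\Gamma\times\cX)=\lin\pi(\Gamma)A\cX=\cE$, which is precisely the asserted minimality. Conversely, given any factorisation as in (ii) with $\cE=\lin\pi(\Gamma)A\cX$, I would set $V(s,x):=\pi(s)Ax$ and verify directly, via \eqref{e:zaoa} and $\pi(s^*t)=\pi(s)^*\pi(t)$, that $(\cE;\pi;V)$ is a minimal invariant weak VE-space linearisation of $\fk$; the uniqueness statement of Theorem~\ref{t:veinvkolmo} then delivers a unitary $U$ intertwining the two representations with $UV(s,x)=V'(s,x)$, and evaluating at $s=e$ gives $UA=A'$, which is the claimed unitary equivalence.

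The main obstacle is not any single deep step but the bookkeeping that makes the unit $e$ do its work: one must verify simultaneously that $\pi$ is unital, that $x\mapsto V(e,x)$ is genuinely linear, so that $A\in\cL(\cX,\cE)$ rather than a mere set map, and that the minimality notion $\cE=\lin\pi(\Gamma)A\cX$ of the present statement coincides with the minimality $\lin V(\Gamma\times\cX)=\cE$ of Theorem~\ref{t:veinvkolmo}. Each of these rests on the non-degeneracy of the gramian (ve1) and on keeping the conjugate-linear/linear conventions of $\cX_Z^\prime$ and of the $Z$-gramian consistent throughout.
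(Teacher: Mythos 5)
Your proposal is correct and follows essentially the same route as the paper: reduce to the weakly positive semidefinite, $\Gamma$-invariant kernel $\fk$ of \eqref{e:fekasextey} on $\Gamma\times\cX$, apply Theorem~\ref{t:veinvkolmo} to get a minimal invariant weak VE-space linearisation, and set $A=\tilde V(e)$ so that $T_t=A'\pi(t)A$. The only difference is that you spell out the linearity of $x\mapsto V(e,x)$, the unitality of $\pi$, and the minimality/uniqueness clause, which the paper dispatches as standard arguments; your details are accurate.
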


\begin{proof} (i)$\Ra$(ii). We consider the kernel $\fk\colon\Gamma\times\cX\ra Z$ as in 
\eqref{e:fekasextey} and the left action of $\Gamma$ on $\Gamma\times\cX$ as in \eqref{e:usex}.
By Remark~\ref{r:wpsd}.(1) and Remark~\ref{r:wpsd}.(2), $\fk$ is a $Z$-valued weakly positive
semidefinite kernel invariant under the action of $\Gamma$ as in \eqref{e:usex}
hence, by Theorem~\ref{t:veinvkolmo}, there exists a minimal invariant weak
VE-space linearisation $(\cE,\pi,V)$ of $\fk$. Since
\begin{equation*}
[V(s,x),V(t,y)]_\cE=\fk((s,x),(t,y))=(T_{s^*t}y)x,\quad s,t\in\Gamma,\ x,y\in\cX,
\end{equation*}
it follows that $\cX\ni x\mapsto V(s,x)\in \cE$ is linear, for all $s\in\Gamma$. This shows that,
we can define $\tilde V\colon\Gamma\ra\cL(\cX,\cE)$ by $\tilde V(s)x=V(s,x)$, for all $s\in\Gamma$
and all $x\in\cX$. Taking into account \eqref{e:zaoa} it follows that
\begin{equation*}
(\tilde V(s)^\prime f)x=[\tilde V(s)x,f]_\cE,\quad s\in \Gamma,\ x\in\cX,\ f\in\cE,
\end{equation*}
hence, letting $A=\tilde V(e)\in\cL(\cX,\cE)$ it follows that, for all $s\in\Gamma$ and all $x,y\in\cX$
we have
\begin{align*}
(A^\prime \pi(s)Ax)y & =(\tilde V(e)^\prime \pi(s)\tilde V(e)x)y = [\tilde V(e)y,\pi(s)\tilde V(e)x]_\cE \\
& = [V(e,y),\pi(s)V(e,x)]_\cE= [V(e,y),V(s,x)]_\cE=  
\fk((e,y),(s,x))=(T_sy)x,
\end{align*} and hence \eqref{e:tegad} is proven. The minimality and the uniqueness property
follow by standard arguments that we omit.

(ii)$\Ra$(i). This follows by a standard argument that we omit.
\end{proof}

Theorem~\ref{t:pateralg} is stronger than Theorem~3.1 
in \cite{PaterBinzar} since, in addition to positive semidefiniteness of $T$ they require the 
condition \eqref{e:tesy} as well. As we have seen in Remark~\ref{r:wpsd}.(3), this condition
is a consequence of the positive semidefiniteness of $T$. Also, the ordered $*$-space $Z$ need 
not be admissible, actually, the topology of $Z$ does not play any role.

From now on we assume that $Z$ is a topologically ordered $*$-space and that
$\cX$ is a locally bounded topological vector space, that is, in $\cX$ 
there exists a bounded neighbourhood of $0$. By
$\cX_Z^*$ we denote the subspace of $\cX_Z^\prime$ of all continuous conjugate linear functions 
from $\cX$ to $Z$ and call it the \emph{topological conjugate $Z$-dual space}. The 
space $\cX_Z^*$ is considered 
with the topology of uniform convergence on bounded sets, that is, 
a net $(f_i)_{i\in\cI}\in \cX_Z^*$ 
converges to $0$ if for any bounded subset $B\subset\cX$ 
the $Z$-valued net $(f_i(y))_{i\in\cI}$ converges 
to $0$ uniformly with respect to $y\in B$, equivalently, for any bounded set $B\subset\cX$,
any $p\in S(Z)$ 
and any $\epsilon > 0$, there exists $i_0\in \cI$ such that 
$i \geq i_0$ implies $p(f_i(y))<\epsilon$ for all $y\in B$. 
Let $\cL_{\mathrm{c}}(\cX,\cX_Z^*)$ be the space 
of all continuous linear operators from $\cX$ to $\cX_Z^*$. 

Let $\cE$ be a VE-space over $Z$, with topology defined as in Subsection~\ref{ss:vhs}.
Following \cite{PaterBinzar} and \cite{GorniakWeron2}, for any $A\in\cL_{\mathrm{c}}(\cX,\cE)$
the \emph{topological $Z$-adjoint operator} of $A$ is, by definition, the operator 
$A^*\colon \cE\ra \cX_Z^*$ defined by
\begin{equation}\label{e:zao} (A^* f)x=[Ax,f]_\cE,\quad f\in\cE,\ x\in \cX.
\end{equation}
By Lemma~\ref{l:schwarz} the definition of $A^*$ is correct.

\begin{theorem}\label{t:pater}
Let $\Gamma$ be a $*$-semigroup with unit $e$ and $\cX$ be a locally bounded
topological vector space 
with topological conjugate $Z$-dual space $\cX_Z^*$ for an admissible space $Z$. Let 
$T\colon\Gamma\ra\cL_{\mathrm{c}}(\cX,\cX_Z^\prime)$ subject to the following properties:
\begin{itemize}
\item[(a)] $T$ is
 an $\cL(\cX,\cX_Z^\prime)$-valued positive semidefinite map.
 \item[(b)] For all $u\in\Gamma$, there is a constant $c(u)\geq 0$ such that 
for all $n\in\NN$, all $s_1,\ldots,s_n\in\Gamma$, and all $x_1,\ldots,x_n\in\cX$,
we have
\begin{equation}\label{e:sznagy2}\sum_{i,j=1}^n(T_{s_i^*u^*us_j}x_j)(x_i) 
\leq c(u)^2\sum_{i,j=1}^n (T_{s_i^*s_j}x_j)(x_i).\end{equation}
 \item[(c)] $T(e)\in\cL_{\mathrm{c}}(\cX,\cX_Z^*)$.
 \end{itemize}
 
Then:
\begin{itemize}
\item[(i)] There exist a VH-space $\cK$ over $Z$,
a $*$-representation $\pi\colon\Gamma\ra\cB^*(\cK)$ 
and an operator $A\in\cL_{\mathrm{c}}(\cX,\cK)$, such that 
$T_s=A^*\pi(s)A$ for any $s\in\Gamma$.
\item[(ii)] $T_s\in\cL_{\mathrm{c}}(\cX,\cX_Z^*)$ for all $s\in\Gamma$.
\item[(iii)] If $(u_l)_{l\in \cL}$ is a net in $\Gamma$ with $\sup_{l\in \cL}c(u_l)<\infty$ 
and $(T_{su_lt})_{l\in \cL}$ converges to $T_{sut}$, for some $u\in\Gamma$ and 
any $s,t\in\Gamma$, in the 
weak topology of $\cL_{\mathrm{c}}(\cX,\cX_Z^*)$, then $(\pi(u_l))_{l\in \cL}$ converges to 
$\pi(u)$ in the weak topology of $\cB^*(\cK)$. 
\end{itemize} 
\end{theorem}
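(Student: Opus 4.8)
The plan is to realise $T$ as a special case of the invariant weakly positive semidefinite kernel machinery of Theorem~\ref{t:vhinvkolmo}, exactly as in the algebraic Theorem~\ref{t:pateralg}, and then to upgrade the algebraic $Z$-adjoint to the topological one using local boundedness of $\cX$ together with hypothesis (c). First I would introduce the $Z$-valued kernel $\fk$ on $(\Gamma\times\cX)\times(\Gamma\times\cX)$ as in \eqref{e:fekasextey}, namely $\fk((s,x),(t,y))=(T_{s^*t}y)x$, together with the left action $u\cdot(s,x)=(us,x)$ of \eqref{e:usex}. By Remark~\ref{r:wpsd}.(1), hypothesis (a) is precisely the weak positive semidefiniteness of $\fk$, and by Remark~\ref{r:wpsd}.(3) the kernel $\fk$ is invariant under this action, so conditions 1.(a) and 1.(b) of Theorem~\ref{t:vhinvkolmo} hold. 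To obtain condition 1.(c) I would compute $\fk(\alpha\cdot(s,x),\alpha\cdot(t,y))=(T_{s^*\alpha^*\alpha t}y)x$ and absorb the scalars into the $\cX$-variables, using that $T_{s^*t}$ is linear and $(T_{s^*t}y)x$ is conjugate linear in $x$ and linear in $y$; hypothesis (b), with $u=\alpha$ applied to the vectors $t_j x_j$, then yields exactly \eqref{e:cbound}. Thus Theorem~\ref{t:vhinvkolmo} provides a topologically minimal boundedly adjointable invariant weak VH-space linearisation $(\cK;\pi;V)$ of $\fk$, with $\pi\colon\Gamma\ra\cB^*(\cK)$ unital since $\pi(e)V(t,x)=V(t,x)$ on the dense set $\lin V(\Gamma\times\cX)$.

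Next I would build the operator. Since $[V(s,x),V(t,y)]_\cK=(T_{s^*t}y)x$ is linear in $x$ for each fixed $s$, setting $\tl V(s)x=V(s,x)$ defines a linear map $\tl V(s)\colon\cX\ra\cK$, and I put $A=\tl V(e)$. Continuity of $A$ is where the topology enters: writing $[Ax,Ax]_\cK=(T_ex)x$ and fixing a bounded neighbourhood $B$ of $0$ (local boundedness of $\cX$), hypothesis (c) that $T_e\in\cL_{\mathrm{c}}(\cX,\cX_Z^*)$ means $x\mapsto T_ex$ is continuous for the topology of uniform convergence on bounded sets, so for each $p\in S(Z)$ and $\epsilon>0$ there is a neighbourhood $W$ of $0$ with $\sup_{y\in B}p((T_ex)y)<\epsilon$ whenever $x\in W$; taking $y=x\in W\cap B$ gives $\tl p(Ax)^2=p((T_ex)x)<\epsilon$ on the neighbourhood $W\cap B$, whence $A\in\cL_{\mathrm{c}}(\cX,\cK)$. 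The topological $Z$-adjoint $A^*$ of \eqref{e:zao} is then well defined, and the factorisation repeats the computation of Theorem~\ref{t:pateralg}: using $\pi(s)V(e,x)=V(se,x)=V(s,x)$,
\begin{equation*}
(A^*\pi(s)Ax)y=[Ay,\pi(s)Ax]_\cK=[V(e,y),V(s,x)]_\cK=\fk((e,y),(s,x))=(T_sx)y,
\end{equation*}
which proves $T_s=A^*\pi(s)A$, i.e. assertion (i). Assertion (ii) then follows by reading $T_s=A^*\pi(s)A$ as a composition of continuous maps $\cX\to\cK\to\cK\to\cX_Z^*$: $A$ is continuous by the previous step, $\pi(s)\in\cB^*(\cK)$ is continuous, and $A^*$ is continuous because for any bounded $B\subseteq\cX$ Lemma~\ref{l:schwarz} gives $p((A^*f)x)=p([Ax,f]_\cK)\le 4\tl p(Ax)\tl p(f)$ while $A(B)$ is bounded in $\cK$, so $\sup_{x\in B}p((A^*f)x)\to 0$ as $f\to 0$.

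For assertion (iii) I would work in the weak (operator) topology on $\cB^*(\cK)$ and reduce to the dense subspace $\lin V(\Gamma\times\cX)$. The bound $\|\pi(u_l)\|\le c(u_l)$ coming from the proof of Theorem~\ref{t:vhinvkolmo}, together with $\sup_l c(u_l)<\infty$, gives a uniform bound on the net, so a standard $3\epsilon$ estimate based on the Schwarz inequality reduces matters to showing $[\pi(u_l)V(t,x),V(s,y)]_\cK\ra[\pi(u)V(t,x),V(s,y)]_\cK$ in $Z$. The intertwining yields $[\pi(u_l)V(t,x),V(s,y)]_\cK=\fk((u_lt,x),(s,y))=(T_{t^*u_l^*s}y)x$, and the Hermitian identity \eqref{e:tesety} rewrites this as $((T_{s^*u_lt}x)y)^*$; applying the hypothesis with the pair $(s^*,t)$, namely $(T_{s^*u_lt}x)y\ra(T_{s^*ut}x)y$ in $Z$, and transporting back through the involution and \eqref{e:tesety}, produces the required limit $[\pi(u)V(t,x),V(s,y)]_\cK$.

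I expect the genuinely delicate points to be two. First, extracting continuity of $A$ from the interplay between the bounded neighbourhood $B$ of $0$ in $\cX$ and the topology of uniform convergence on bounded sets on $\cX_Z^*$ in hypothesis (c); this is exactly the place where local boundedness is indispensable. Second, the bookkeeping with the involution in (iii): the left action always produces the adjoint $u_l^*$ in the $\Gamma$-exponent, whereas the hypothesis is phrased for $u_l$, so one is forced to route the argument through the Hermitian symmetry \eqref{e:tesety}, and one must make sure the weak convergence in $Z$ is preserved under the $*$-operation.
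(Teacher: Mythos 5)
Your construction for assertions (i) and (ii) is essentially the paper's own proof: the same kernel \eqref{e:fekasextey}, the same action \eqref{e:usex}, the same verification of condition 1.(c) of Theorem~\ref{t:vhinvkolmo} by absorbing the scalars $t_j$ into the $\cX$-variables, the same choice $A=\tl V(e)$, and the same use of local boundedness of $\cX$ together with hypothesis (c) to obtain $A\in\cL_{\mathrm{c}}(\cX,\cK)$. (The paper proves continuity of $\tl V(s)$ for every $s$ by first reducing to $s=e$ via \eqref{e:sznagy2}; for the stated conclusion only $s=e$ is needed, so your shortcut is fine.) Your explicit continuity argument for $A^*$, via Lemma~\ref{l:schwarz} and the boundedness of $A(B)$ in $\cK$, correctly fills in what the paper leaves implicit in deducing (ii) from $T_s=A^*\pi(s)A$.

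The one genuine issue is in (iii). You put $\pi(u_l)$ in the first slot of the gramian, which produces $[\pi(u_l)V(t,x),V(s,y)]_\cK=(T_{t^*u_l^*s}y)x$; since the hypothesis concerns nets of the form $T_{su_lt}$ and not $T_{su_l^*t}$, you are forced to route through the Hermitian identity \eqref{e:tesety} and then need the involution of $Z$ to carry the convergence $(T_{s^*u_lt}x)y\ra(T_{s^*ut}x)y$ over to its adjoints --- a point you flag but do not settle. Continuity of the involution is not among the axioms (a1)--(a6) defining an admissible space in this paper, so as written this step is not justified. The paper avoids the problem entirely by computing $[e,\pi(u_l)f]_\cK$ with $\pi(u_l)$ acting in the second slot, which gives $\sum_{i,j}\ol{\alpha_i}\beta_j(T_{s_i^*u_lr_j}y_j)(x_i)$ and converges directly by hypothesis; switching to that slot repairs your argument with no other changes. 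The uniform bound $\sup_{l}\|\pi(u_l)\|\leq\sup_{l}c(u_l)<\infty$ and the Schwarz-based approximation extending the convergence from $\lin V(\Gamma\times\cX)$ to all of $\cK$ are exactly as in the paper.
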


\begin{proof}
Define the kernel $\fk\colon (\Gamma\times \cX)\times(\Gamma\times \cX) \ra Z$ as in 
\eqref{e:fekasextey}. By Remark~\ref{r:wpsd}, it follows that $\fk$ is a $Z$-valued weakly positive
semidefinite kernel.
Next, consider the left action of $\Gamma$ 
on $(\Gamma\times \cX)$ as in \eqref{e:usex} and by Remark~\ref{r:wpsd} it follows that
$\fk$ is invariant under this action.
 In order to show that the property 1.(c) of Theorem \ref{t:vhinvkolmo} holds, 
let $u\in\Gamma$, $n\in\NN$, $(s_i,x_i)_{i=1}^n\in(\Gamma\times \cX)$. Then, using
\eqref{e:sznagy2} it follows that
\begin{align*}
\sum_{i,j=1}^n\fk(u\cdot(s_i,x_i),u\cdot(s_j,x_j))&=\sum_{i,j=1}^n(T_{s_i^*u^*us_j}x_j)x_i \\
&\leq c(u)^2\sum_{i,j=1}^n (T_{s_i^*s_j}x_j)(x_i)
=c(u)^2\sum_{i,j=1}^n \fk((s_i,x_i),(s_j,x_j)).
\end{align*}

By Theorem \ref{t:vhinvkolmo}, there exists a topologically minimal 
invariant weak VH-space linearisation 
$(\cK;\pi;V)$ of the kernel $\fk$. Since 
$[V(s,x),V(t,y)]_{\cK}=\fk((s,x),(t,y))=(T_{s^*t}y)(x)$ for all 
$s,t\in \Gamma$ and $x,y\in \cX$, we observe that $V(s,x)$ depends linearly 
on $x\in \cX$ for each $s\in\Gamma$. As a consequence, letting $\tl{V}(s)x=V(s,x)$, for all 
$x\in\cX$, we obtain a linear operator 
$\tl{V}(s):\cX\ra\cK$ for each $s\in\Gamma$. 
To see that $\tl{V}(s)$ is continuous for each $s\in \Gamma$, let 
$(x_l)_{l\in\cL}$ be a net in $\cX$ converging to $0$. Since $\cX$ is locally bounded, there
exists $B\subset\cX$ a bounded neighbourhood of $0$ and then there exists $l_1\in\cL$ such 
that $(x_l)_{l\geq l_1}$ is contained in $B$. Since $T_e\in\cL_{\mathrm{c}}(\cX,\cX_Z^*)$, 
taking into account the topology of $\cX_Z^*$,
given any $\varepsilon > 0$ and any $p\in S(Z)$
we can find $l_2\in\cL$ such that $\cL\ni l\geq l_2$ implies $p((T_ex_l)y)<\varepsilon$ 
for all $y\in B$. Since $\cL$ is directed, there exists $l_0\in\cL$ with $l_0\geq l_1$ and 
$l_0\geq l_2$.
Then, for any $l\geq l_0$, by  \eqref{e:sznagy2} and taking into account how 
the topology of $\cK$ is defined, see Subsection~\ref{ss:vhs},
we have
\begin{align*}
p([\tl{V}(s)x_l,\tl{V}(s)x_l]_{\cK})&=p(\fk((s,x_l),(s,x_l)) \\
&=p(\fk(s\cdot(e,x_l),s\cdot(e,x_l)) 
\leq c(s)^2 p(\fk((e,x_l),(e,x_l))) \\
&=c(s)^2p((T_ex_l)x_l) \leq c(s)^2\sup_{y\in B} p((T_ex_l)y) \leq c(s)^2\varepsilon,
\end{align*} 
 hence $\tl{V}(s)\in\cL_{\mathrm{c}}(\cX,\cK)$, for any $s\in\Gamma$. 
In addition, for each $s\in\Gamma$ the operator $\tl{V}(s)^*\in\cL(\cK,\cX_Z^*)$ is defined as in \eqref{e:zao}.

Letting $A:=\tl{V}(e)$ we have
\begin{align*}
(A^*\pi(s)Ax)y&=(\tl{V}(e)^*\pi(s)\tl{V}(e)x)(y)=[V(e,y),V(s,x)]_{\cK} \\
&=\fk((e,y),(s,x))=(T_sx)y\,\,\mbox{for all}\,\,s\in\Gamma\,\,\mbox{and}\,\,x,y\in \cX.
\end{align*}
Therefore $A^*\pi(s)A=T_s\in\cL_{\mathrm{c}}(\cX,\cX_Z^*)$, for all $s\in\Gamma$.

The rest of the proof, which shows that $\pi(u_l)_{l\in \cL}$ 
converges to $\pi(u)$ in the weak topology of $\cB^*(\cK)$, as 
in the second part of the conclusion, uses standard arguments and 
is the same with that in \cite{PaterBinzar}. 
For completeness, 
we present it here. Let $\cK_0:=\mathrm{Lin}V(\Gamma\times \cX)$. 
By minimality, $\cK_0$ is dense in $\cK$. Let $e,f\in\cK_0$, 
with $e=\sum_{i=1}^n\alpha_iV(s_i,x_i)$ and $f=\sum_{j=1}^m\beta_jV(r_j,y_j)$. We have
\begin{align*}
[e,\pi(u_l)f]_{\cK}& =[(\sum_{i=1}^n\alpha_iV(s_i,x_i)),\pi(u_l)\sum_{j=1}^m\beta_jV(r_j,y_j)]_{\cK} \\
& =\sum_{i=1}^n\sum_{j=1}^m\overline\alpha_i\beta_j(T_{s_i^*ur_j}y_j)(x_i)\xrightarrow[l]{ }
\sum_{i=1}^n\sum_{j=1}^m\overline\alpha_i\beta_j(T_{s_i^*ur_j}y_j)x_i=[e,\pi(u)f]_{\cK}
\end{align*} 
by the assumption that $(T_{su_lt})_{l\in \cL}$ converges to $T_{sut}$ for any $s,t\in\Gamma$. 
Now let $g,h\in\cK$ and let $(g_i)_{i\in \cI}\in\cK_0$ be a net converging to $g$. 
For any $p\in S(Z)$, $j\in \cI$ and $l\in \cL$ we have
\begin{align*}
p([(\pi(u)-\pi(u_l))g,h]_{\cK})& \leq p([\pi(u)(g-g_j),h]_{\cK})+p([(\pi(u)-\pi(u_l))g_j,h]_{\cK}) \\
&\phantom{p([\pi(u)(g-g_j),h]_{\cK})+p([(\pi(u)}
+p([\pi(u_l)(g_j-g),h]_{\cK})\\
& \leq  4\tl{p}(h)\bigl(\tl{p}(\pi(u)(g-g_j))+\tl{p}(\pi(u_l)(g_j-g))+\tl{p}((\pi(u)-\pi(u_l))g_j)\bigr) \\
& \leq  4\tl{p}(h)\bigl(c^2\tl{p}(g-g_j)+\tl{p}((\pi(u)-\pi(u_l))g_j)\bigr)
\end{align*}
for some constant $c$, where the second inequality follows by the Schwarz type 
inequality \eqref{e:schwarz} and the third inequality by the fact that $\pi(u),\pi(u_l)\in \cB^*(\cK)$ 
and the assumption that $\sup_{l\in \cL}c(u_l)<\infty$. 
Now that weak convergence was shown in $\cK_0$, 
a standard argument finishes the proof.
\end{proof}

\begin{remarks}\label{r:rf} 
(1) Theorem~\ref{t:pater} is stronger than Theorem 4.2 of \cite{PaterBinzar}, see also the correction 
in \cite{PaterBinzar2}, with respect to two aspects: firstly, since they have the additional
assumption that \eqref{e:tesy} holds,
which is actually a consequence of positive semidefiniteness, as Remark~\ref{r:wpsd}.(2) shows,
and secondly since their assumption $T_s\in\cL_{\mathrm{c}}(\cX,\cX_Z^*)$, for all $s\in\Gamma$, 
is actually a consequence of the weaker one $T_e\in\cL_{\mathrm{c}}(\cX,\cX_Z^*)$, as the proof
of Theorem~\ref{t:pater} shows.

(2) It is easy to see that, 
there is a "converse" to Theorem~\ref{t:pater} in the sense that, if assertion (i)
is assumed, then assertions (a), (b), (c),  and (ii) are obtained as consequences.
\end{remarks}

\end{document}